\definecolor{cerulean}{rgb}{0,.48,.65} 
\definecolor{magenta}{rgb}{.5,0,.5} 
\definecolor{dred}{rgb}{.5,0,0} 
\definecolor{green}{rgb}{0,.5,0} 
\definecolor{blue}{rgb}{0,0,0.5} 
\definecolor{black}{rgb}{0,0,0} 
\definecolor{dgreen}{rgb}{0,.3,0} 
\definecolor{vdred}{rgb}{.3,0,0} 
\definecolor{red}{rgb}{1,0,0} 
\definecolor{salmon}{rgb}{0.98,0.50,0.45} 
\definecolor{gray}{rgb}{.5,.5,.5} 
\definecolor{seagreen}{rgb}{0.13,0.70,0.67} 
\definecolor{chartreuse}{rgb}{0.40,0.80,0.00}
\definecolor{cornflower}{rgb}{0.39,0.58,0.93} 
\definecolor{gold}{rgb}{0.80,0.68,0.00}
\renewcommand{\bar}{\overline} 
\theoremstyle{plain}
\newtheorem{theorem}{Theorem}[section]
\newtheorem{proposition}[theorem]{Proposition}
\newtheorem{corollary}[theorem]{Corollary}
\newtheorem{lemma}[theorem]{Lemma}
\theoremstyle{definition}
\newtheorem{ex}[theorem]{Example}
\newtheorem{definition}[theorem]{Definition}
\newtheorem{Examples}[theorem]{Examples}
\theoremstyle{remark}
\newtheorem{remark}[theorem]{Remark}
\newcommand{\poof}{\begin{proof}}
\newcommand{\npoof}{\end{proof}}
\newcommand{\pro}{\begin{proposition}}
\newcommand{\npro}{\end{proposition}}
\newcommand{\defn}{\begin{definition}}
\newcommand{\ndefn}{\end{definition}}
\newcommand{\thm}{\begin{theorem}}
\newcommand{\nthm}{\end{theorem}}
\newcommand{\cor}{\begin{corollary}}
\newcommand{\ncor}{\end{corollary}}
\newcommand{\bb}[1]{\expandafter\newcommand\expandafter{\csname #1\endcsname}{{\mathbb {#1}}}} 
\def\H{\hbox{\rm Hig}}
\newcommand{\Heis}{{\mathcal{H}}}
\newcommand{\Met}{{\mathcal{M}}}
\newcommand{\thus}{{\Rightarrow}}
\newcommand{\into}{\hookrightarrow}
\newcommand{\onto}{\twoheadrightarrow}
\renewcommand{\a}{\alpha}
\renewcommand{\b}{\beta}
\renewcommand{\d}{\delta}
\newcommand{\ep}{\varepsilon}
\renewcommand{\l}{\lambda}
\newcommand{\ph}{\varphi}
\newcommand{\restricted}[1]{\left|_{#1} \right.}
\newcommand{\id}{\mathrm{id}}
\newcommand{\BS}{\mathrm{BS}}
\newcommand{\SL}{\mathrm{SL}}
\newcommand{\Sym}{\mathrm{Sym}}
\renewcommand{\ker}{\mathrm{ker \:}}
\newcommand{\mrm}[1]{\expandafter\newcommand\expandafter{\csname #1\endcsname}{{\mathrm {#1}}}}
\newcommand{\set}[1]{\left\{#1\right\}}
\newcommand{\ssm}{\smallsetminus}
\newcommand{\abs}[1]{\left|#1\right|}
\newcommand{\ins}{\subseteq}
\newcommand{\by}{{\times}}
\begin{document}

\title[Soficity and variations on Higman's group]{Soficity and variations on Higman's group}
\author[Kassabov, Kuperberg, and Riley]{Martin Kassabov, Vivian Kuperberg, and Timothy Riley }

\date \today

\begin{abstract}
\noindent  A group is sofic when every finite subset can be well approximated in a finite symmetric group.  No example of a non-sofic group is known.  Higman's group, which is a circular amalgamation of four copies of the Baumslag--Solitar group, is a candidate.  Here we contribute to the discussion of the problem of its soficity in two ways.

We construct variations on Higman's group replacing the Baumslag--Solitar group by other groups $G$.  We give an elementary condition on $G$, enjoyed for example by $\Z \wr \Z$ and the integral Heisenberg group, under which the resulting group is sofic.

We then use soficity to deduce that there exist permutations of $\Z / n\Z$ that are seemingly pathological in that they have order dividing four and yet locally they behave like exponential functions over most of their domains.  Our approach is based on that of Helfgott and Juschenko, who recently showed the soficity of Higman's group would imply some the existence of some similarly pathological functions.  Our results call into question their suggestion that this might be a step towards proving the existence of a non-sofic group.
\\
\noindent \footnotesize{\textbf{2010 Mathematics Subject Classification:  03C20, 20F69}}  \\
\noindent \footnotesize{\emph{Key words and phrases:} sofic, Higman's group, residually finite}
\end{abstract}

\maketitle

\section{Our results}
\label{results}

The word \emph{sofic}, derived from the Hebrew for \emph{finite}, was applied to a group by Weiss in~\cite{we} when every finite subset can be well approximated in a finite symmetric group or, equivalently, when the group is a subgroup of a metric ultraproduct of finite symmetric groups. The focus of this article is the outstanding open question about soficity, posed by Gromov in his 1999 paper~\cite{gr}: is every group sofic?
We will give more background on soficity in Section~\ref{soficity section}.

It is not known whether \emph{Higman's group}
$$
H_4   \ = \ \left\langle a,b,c,d \, \left| \,  b^a = b^2, c^b = c^2, d^c = d^2, a^d = a^2 \right. \right\rangle
$$
is sofic.  This group can be constructed as follows.  First amalgamate two copies of the Baumslag--Solitar group $\BS(1,2) = \langle a,b \mid b^a = b^2 \rangle$ to give
$\langle a,b,c \mid b^a = b^2, c^b = c^2 \rangle$.  By properties of the free products with amalgamation, its subgroup $\langle a, c  \rangle$ is free of rank 2.
Amalgamate with a second copy $\langle c,d,a \mid  d^c = d^2, a^d = a^2 \rangle$  along the common $\langle a, c  \rangle$ subgroup to give $H_4$.

Again, properties of free products with amalgamation tell us that the subgroups $\langle a,b \rangle$, $\langle b,c \rangle$, $\langle c,d \rangle$, and $\langle a,d \rangle$ are copies of $\BS(1,2)$, and that $\langle a,c \rangle$ is free of rank 2.  In particular, $H_4$ is not amenable, since it contains a non-abelian free subgroup.  And $H_4$ is not residually finite, because it has no finite quotients~\cite{hi}.  These properties make $H_4$ a candidate for a non-sofic group. The case is made all the more compelling because $H_4$ fails to have a property slightly more restrictive than soficity: Thom proved in~\cite{th-metric} that it does not embed into a metric ultraproduct of finite groups with a commutator-contractive invariant length function.

The building blocks for our variations on Higman's group (explored in more detail in Section~\ref{Generalizing section}) are a group $G$, subgroups $A$ and $B$, an isomorphism $\phi: B \to A$, and a $k \in \N$.  For $1 \leq i \leq k$, let $G_i$  be copies of $G$, let $A_i, B_i \leq G_i$ be copies of its subgroups $A$ and $B$, and let $\phi_{i} : B_i \to A_{i+1}$ (indices mod $k$) be the map naturally induced by $\phi$.  We define
$$
\overline{\H}_k(G, \phi)  \  := \ \langle G_1, \ldots, G_k \mid  b_i = \phi_i(b_i) \text{ for all } i \text{ and all } b_i \in B_i  \rangle,
$$
which is $k$ copies of $G$ assembled in a cyclic analog of a free product with amalgamation.
If $G = \BS(1,2) = \langle a,b \mid b^a = b^2 \rangle$ and $\phi: \langle b \rangle \to \langle a \rangle$ maps $b \mapsto a$, then $\overline \H_4(G, \phi) = H_4$.

Next we define $$ \H_k(G, \phi)   \ := \ \langle G,   t \mid  t^k = 1,  \ b^t = \phi(b);  \forall b \in B  \rangle,$$ a
semi-direct product of $\overline{\H}_k(G, \phi)$ with a cyclic group of order $k$.  The index of $\overline\H_k(G,\phi)$ in $\H_k(G,\phi)$ is $k$, so one is sofic if and only if the other is; see~\cite{pe}.

(Monod has generalized Higman's construction in a different direction in \cite{m}.)

In contrast to $H_4$, we can often prove soficity for these groups.  Indeed, in many cases they are residually solvable, and so sofic.  We will prove in Section~\ref{resfinite section}:

\begin{theorem}
\label{ressolvable}
Suppose $G$ is a residually solvable group and $\phi$ is an isomorphism $B \to A$ between subgroups $A,B \leq G$.  Suppose there exists a group homomorphism $\pi: G \to A \times B$ such that $\pi(a) = (a,1)$ for all $a \in A$ and $\pi(b) = (1,b)$ for all $b \in B$.
Then $\H_k(G, \phi)$ and $\overline{\H}_k(G, \phi)$ are residually solvable for all $k \geq 4$.
\end{theorem}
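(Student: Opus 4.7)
The plan is to prove the theorem in stages: reduce to showing $\overline{\H}_k(G, \phi)$ is residually solvable, handle the case when $G$ is solvable via Bass-Serre theory, and then descend the general residually solvable case through the quotients $G/G^{(m)}$.

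Since $\H_k(G, \phi) = \overline{\H}_k(G, \phi) \rtimes \Z/k\Z$, a standard normal-core argument (extensions by finite solvable quotients preserve residual solvability) reduces the theorem to showing that $\overline{\H}_k(G, \phi)$ is residually solvable. Suppose first that $G$ is solvable. Writing $E_i$ for the common copy of $A_{i+1} = B_i$, define
\begin{equation*}
\rho \colon \overline{\H}_k(G, \phi) \ \longrightarrow \ \prod_{i=1}^k E_i, \qquad g \in G_i \ \mapsto \ \pi_i(g) \in E_{i-1} \times E_i.
\end{equation*}
Consistency on the amalgamations is immediate from the form of $\pi$, and $\rho$ is surjective onto a solvable target (a product of copies of the solvable subgroup $A \leq G$). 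Because $\pi|_A$ and $\pi|_B$ are injective, every edge stabilizer of the Bass-Serre tree of $\overline{\H}_k(G, \phi)$ meets $K := \ker\rho$ trivially. By Bass-Serre theory, $K$ is therefore a free product of conjugates of $\ker\pi$ together with a free group. Each factor is residually solvable ($\ker\pi$ as a subgroup of the solvable $G$, and free groups are residually nilpotent), and free products of residually solvable groups are residually solvable (Gruenberg), so $K$ is residually solvable. An extension of a residually solvable group by a solvable one is residually solvable (a normal-core argument applied to characteristic derived subgroups of the kernel), so $\overline{\H}_k(G, \phi)$ is residually solvable in the solvable case.

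For general residually solvable $G$, each $G^{(m)}$ is normal and $G/G^{(m)}$ is solvable. From $\pi(G^{(m)}) \subseteq A^{(m)} \times B^{(m)}$ and injectivity of $\pi|_A, \pi|_B$ one deduces $A \cap G^{(m)} = A^{(m)}$ and $B \cap G^{(m)} = B^{(m)}$, so the hypotheses of the theorem descend to $G/G^{(m)}$ and each $\overline{\H}_k(G/G^{(m)}, \bar\phi_m)$ is residually solvable by the solvable case. It remains to verify
\begin{equation*}
\bigcap_{m \geq 1}\, \ker\!\bigl( \overline{\H}_k(G, \phi) \longrightarrow \overline{\H}_k(G/G^{(m)}, \bar\phi_m) \bigr) \ = \ 1,
\end{equation*}
which would embed $\overline{\H}_k(G, \phi)$ into a product of residually solvable groups. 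I would do this by taking a nontrivial $g$ in graph-of-groups normal form $c_0 u_1 c_1 \cdots u_r c_r$ and showing that for $m$ large each vertex letter $u_j$ remains outside $\bar A \cup \bar B$ and each edge letter $c_i$ remains nontrivial modulo $G^{(m)}$, so the normal form persists. The crucial input is
\begin{equation*}
\bigcap_{m \geq 1}\, A \cdot G^{(m)} \ = \ A
\end{equation*}
and its analogue for $B$: any $x$ in the left-hand intersection has $\pi(x) \in A \times 1$ (since the $B$-component lies in $\bigcap_m B^{(m)} = 1$, by residual solvability of $B \leq G$), so $x = ak$ with $a \in A$ and $k \in \ker\pi$; writing each $x = a_m g_m$ with $a_m \in A$ and $g_m \in G^{(m)}$ and comparing $\pi$-decompositions then forces $k \in \bigcap_m G^{(m)} = 1$.

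The main obstacle will be this last normal-form calculation, especially the identity $\bigcap_m AG^{(m)} = A$, which rests essentially on having the retraction $\pi$ (without it one would only obtain $A \cdot \ker\pi$). A secondary but non-trivial ingredient is the classical theorem that free products of residually solvable groups are residually solvable, which underpins the Bass-Serre step.
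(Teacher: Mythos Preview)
There is a genuine gap in your treatment of the solvable case. You invoke ``the Bass--Serre tree of $\overline{\H}_k(G,\phi)$'' as though its edge stabilizers were the $E_i$, but no such tree exists: $\overline{\H}_k(G,\phi)$ is \emph{not} the fundamental group of the $k$-cycle graph of groups. That fundamental group is an HNN extension of the path amalgam $G_1\ast_{E_1}\cdots\ast_{E_{k-1}}G_k$ with one stable letter, whereas the cyclic product $\overline{\H}_k(G,\phi)$ is the quotient obtained by killing that stable letter. Consequently your claim that $K=\ker\rho$ is a free product of conjugates of $\ker\pi$ with a free group is unjustified, and in fact it is false. Take $G=\Z^2=\langle a,b\mid ab=ba\rangle$, $\pi=\id$ (so $\ker\pi=1$), and $k=4$. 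Then $\overline{\H}_4(\Z^2)$ is the right-angled Artin group on the $4$-cycle, which is $F_2\times F_2$; your $\rho$ is its abelianization onto $\Z^4$, and $K=[F_2,F_2]\times[F_2,F_2]$. This contains $\Z^2$, so it is not free, contrary to what your argument would give.

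The paper avoids this by exhibiting, for $k\ge 4$, an honest single-edge amalgam $\overline{\H}_k(G,\phi)=J\ast_{A\ast B}J'$ (so the edge group is $A\ast B$, not $A$), and proving residual solvability of such amalgams directly: whenever the amalgamated subgroup has a normal complement on each side, the amalgam is a semidirect product of the edge group with a free product of the two complements, and that structure lets one push any nontrivial element into a solvable quotient. The retraction $\pi$ is precisely what supplies these normal complements (the splittings $G=A\ltimes G_A=B\ltimes G_B$) and what propagates a retraction onto $A\ast B$ through the iterated amalgams building $J$ and $J'$. This argument already handles residually solvable $G$, so your descent through the $G/G^{(m)}$ is unnecessary --- and since that descent also relies on normal forms in $\overline{\H}_k(G,\phi)$, it inherits the same issue about which tree one is using.
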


Examples of $G$ admitting such a $\pi$ include $\Z^2 = \langle a, b \mid ab=ba \rangle$, the three-dimensional integral Heisenberg group $\Heis = \langle a, b \mid [a,[a,b]] = [b,[a,b]] =1 \rangle$, $\Z \wr \Z =  \langle a \rangle \wr \langle b \rangle$, and the free metabelian group on two generators $a$ and $b$  (all with $A= \langle a \rangle$, $B= \langle b \rangle$, and $\phi: b \mapsto a$).  There is no such $\pi$ for $\BS(1,2) = \langle a, b \mid a^b=a^2 \rangle$.  See Examples~\ref{example groups} for details.

With a view to showing that $H_4$ is not sofic, Helfgott and Juschenko proved:

\begin{theorem} [Helfgott--Juschenko~\cite{he-ju}]
\label{HJ thm}
If Higman's group $H_4$ is sofic, then for all $\epsilon >0$ there exists $N \in \N$, such that for all odd $n >N$ there exists  $f \in \Sym(\Z/n\Z)$ of order dividing $4$ with $f(x+1) = 2f(x)$ for at least $(1-\ep)n$ elements $x \in \Z/n\Z$.
\end{theorem}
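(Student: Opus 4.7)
My plan is to leverage soficity by passing to the cyclic extension $\H_4(\BS(1,2),\phi)=H_4\rtimes\langle t\rangle$ from Section~\ref{results}, where $t$ is an order-$4$ element realizing the cyclic symmetry of the relations. By Pestov's theorem~\cite{pe}, soficity of $H_4$ is equivalent to soficity of this extension. Moreover, the ``inversion'' automorphism $\iota$ sending each of $a,b,c,d$ to its inverse has order $2$ and commutes with $t$, so I would further extend to $H_4\rtimes(\langle t\rangle\times\langle\iota\rangle)$, still sofic. In this larger group the element $\iota t^{-1}$ has order exactly $4$ and conjugates $b$ to $a^{-1}$, which is what we will need in order to intertwine ``successor'' with ``doubling''.

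Given $\epsilon>0$, pick $\epsilon'\ll\epsilon$ and a finite set $F$ containing $a,b,t,\iota$ and their relevant products. For $|X|$ sufficiently large, soficity supplies an $\epsilon'$-approximation $\psi:F\to\Sym(X)$; write $\alpha,\beta,\tau,\hat\iota$ for the images. A standard torsion cleanup arranges $\tau^4=\hat\iota^2=\mathrm{id}$ and $\hat\iota\tau=\tau\hat\iota$ exactly, while disturbing only an $\epsilon'$-fraction of $X$. Then $f_X:=\tau^{-1}\hat\iota$ has order exactly dividing $4$, and the approximate BS-relation $\alpha^{-1}\beta\alpha=\beta^2$ implies that $f_X$ conjugates $\beta$ to $\alpha^{-1}$ on all but an $\epsilon'|X|$ subset.

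For odd $n>N$, use the cycle structure of $\beta$ (rich once $|X|\gg n/\epsilon$) to fix a $\beta$-cycle $C\subset X$ of length exactly $n$, identified with $\Z/n\Z$ via $\beta|_C(x)=x+1$. The conjugation $f_X\beta f_X^{-1}\approx\alpha^{-1}$ together with the iterated BS-relation $\alpha^{-1}\beta^k\alpha=\beta^{2k}$ shows that the $f_X$-orbit $\tilde C=\bigcup_{i=0}^{3}f_X^i(C)$ is a $\langle f_X\rangle$-invariant set of size at most $4n$, on which $f_X$ intertwines successor on $C$ with doubling on $f_X(C)$ via the natural parameterization by powers of $2$. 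Pulling this back along the identifications to a single copy of $\Z/n\Z$, and performing a final cleanup on the $\epsilon n$ bad points so as to preserve order dividing $4$, produces the desired $f\in\Sym(\Z/n\Z)$ with $f(x+1)=2f(x)$ on at least $(1-\epsilon)n$ elements.

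The main obstacle will be the passage from the sofic permutation $f_X$ on $X$ to a permutation $f$ of $\Z/n\Z$ realizing the pointwise doubling relation. Because successor (a single $n$-cycle) and doubling (several smaller cycles, with a fixed point at $0$) have incompatible cycle structures on $\Z/n\Z$, the identity $f(x+1)=2f(x)$ is strictly weaker than a structural conjugation of permutations and can only hold pointwise, on most but not all $x$. Controlling the interplay between the sofic error $\epsilon'$, the target error $\epsilon$, and the size of $X$ in terms of $n$---and ensuring the final cleanup on the $\epsilon n$ bad points keeps $f$ of order dividing $4$ without destroying the doubling relation on the good set---is the principal technical work.
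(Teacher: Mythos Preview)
Your proposal has a genuine gap in the passage from the sofic approximation on $X$ to a permutation of $\Z/n\Z$.

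The central problem is the claim that for large $|X|$ the permutation $\beta=\psi(b)$ has a cycle of length \emph{exactly} $n$.  Nothing in the definition of a sofic approximation forces any particular cycle lengths on $\beta$.  For instance, the approximation furnished by soficity could itself be (an amplification of) the explicit map $\psi_{n'}:\BS(1,2)\to\Sym(\Z/n'\Z)$ for some fixed odd $n'$ unrelated to $n$; then $\beta$ has no cycle of length $n$ whatsoever, no matter how large $|X|$ is.  Since the theorem must hold for \emph{all} odd $n>N$, this already breaks the argument.  Even if such a cycle $C$ existed, your ``pulling back along the identifications to a single copy of $\Z/n\Z$'' is not well defined: $f_X$ sends $C$ to a different subset $f_X(C)\subseteq X$, and there is no canonical identification of $f_X(C)$, $f_X^2(C)$, $f_X^3(C)$ with $C$ that simultaneously yields a self-map of $\Z/n\Z$, preserves order dividing $4$, and turns the approximate conjugacy into the pointwise relation $f(x+1)=2f(x)$.

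What you are missing is precisely the ingredient the paper isolates: the \emph{amenability} of $\BS(1,2)$.  For each odd $n$ one has the explicit approximation $\psi_n:\BS(1,2)\to\Sym(\Z/n\Z)$ with $\psi_n(a)(x)=x+1$ and $\psi_n(b)(x)=2^{-1}x$ (Lemma~\ref{lm-soficapproximationBS}).  Theorem~\ref{amenable-conjugate} (Helfgott--Juschenko, building on Elek--Szab\'o) says that for an amenable group any two sofic approximations into the same $\Sym(n)$ are almost conjugate.  This is exactly what transports the abstract order-$4$ permutation coming from $t$ in a sofic approximation of $\H_4(\BS(1,2))$ over to the \emph{specific} approximation $\psi_n$ on $\Z/n\Z$; after the torsion cleanup and the sign change $\tilde f(x)=-f(-x)$ one obtains the desired $f$.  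This is the substance of the implication \emph{(\ref{main-precise1})}~$\Rightarrow$~\emph{(\ref{main-precise3})} in Theorem~\ref{main-precise}, and it is the step your argument tries to shortcut.  (Your use of the inversion automorphism $\iota$ in place of the sign change is a harmless variant; the difficulty lies elsewhere.)
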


The $f$ of Helfgott and Juschenko's theorem behave locally like an exponential function over most of $\Z/n\Z$ but nevertheless are permutations of order dividing four.  They gave a heuristic argument as to why such $f$ are unlikely to exist, based on the assumption that these two properties are independent (an intuition that they backed up with comparisons to prominent conjectures in analytic number theory).  In Section~\ref{heuristics} we give further analysis as to why one might have expected such $f$ not to exist.

Since Helfgott and Juschenko's paper first appeared (as a preprint on the arXiv in December 2015) doubt has been cast on this intuition by the following two very similar theorems.

\begin{theorem}
\label{heuristicapplication}
For all $\varepsilon >0$ and all $k\geq 3$, there exists $N \in \N$ such that
for all coprime integers $m$ and $n$ with $n > N$ and $\ln \ln n < m < \ln n$,
there exists $f \in \Sym( \Z /n\Z)$ of order dividing $k$ with
$
f(x+1) = m f(x)
$
for at least $(1-\varepsilon)n$ values of $x \in \Z/n\Z$.
\end{theorem}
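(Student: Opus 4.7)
The plan is to adapt the Helfgott--Juschenko argument, using one of our sofic variants $\H_k(G,\phi)$ from Theorem~\ref{ressolvable} in place of Higman's group. The key is choosing $G$ so that it both satisfies the condition of Theorem~\ref{ressolvable} and encodes a ``multiplication by $m$'' relation. I would take $G = \BS(1,m) \times \Z$, where $\BS(1,m) = \langle a,b \mid b^a = b^m \rangle$, and let $A = \langle a\rangle \le \BS(1,m)$, $B = \langle c\rangle$ be the $\Z$-factor, and $\phi(c) = a$. The relation $b^a = b^m$ forces $b^{m-1} = 1$ in the abelianization of $G$, so the projection $\pi : G \to A\times B$ killing $b$ exists and verifies the hypothesis of Theorem~\ref{ressolvable}; hence $\H_k(G,\phi)$ is sofic for $k \ge 4$. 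The case $k = 3$ requires a separate argument, for instance a direct verification that $\H_3(G,\phi)$ is sofic by exhibiting enough finite solvable quotients.

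Given a sofic approximation $\rho : \H_k(G,\phi) \to \Sym(N)$ at a sufficiently fine tolerance, the relation $b^a = b^m$ forces $\rho(a)^{-1}\rho(b)\rho(a)$ to agree with $\rho(b)^m$ outside a small set. On any long cycle $C$ of $\rho(b)$ of length $\ell$, identifying $C$ with $\Z/\ell\Z$ so that $\rho(b)|_C$ is the shift then forces $\rho(a)|_C$ to act (approximately) as multiplication by $m$. The relation $t^{-1}ct = a$ combined with $t^k = 1$ (corrected to hold exactly on a large subset via the standard near-identity correction) produces $\rho(t)$ of order dividing $k$ intertwining $\rho(c)$ with $\rho(a)$. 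Tracking $\rho(t)$ through a $\Z/k\Z$-orbit of $\rho(b)$-cycles of common length $\ell \approx n$ and identifying the union with $\Z/n\Z$ compatibly with the shift yields the desired $f$: it has $f^k = \id$ and $f(x+1) = mf(x)$ on all but $O(n/\mathrm{ord}_n(m))$ values of~$x$.

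The main obstacle is that a generic sofic approximation lives on $\Sym(N)$ with cycles of many different lengths, whereas the statement requires a single permutation of $\Sym(\Z/n\Z)$ for the prescribed $n$. I would address this by exploiting the residual solvability of $\H_k(G,\phi)$ to select a sofic approximation factoring through a finite solvable quotient whose action on $\Z/n\Z$ has the desired cycle structure, with $\rho(b)$ and $\rho(a)$ realizing the shift and multiplication by $m$ on a $\Z/k\Z$-invariant collection of cycles that exhausts $\Z/n\Z$. The hypothesis $\ln\ln n < m < \ln n$ enters through the bound $\mathrm{ord}_n(m) \ge \log n/\log m \ge \log n/\log\log n$, which makes the boundary-defect count $O(n/\mathrm{ord}_n(m))$ at most $\varepsilon n$ for $n$ sufficiently large; the upper bound $m < \ln n$ keeps the combinatorial structure manageable.
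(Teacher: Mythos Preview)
Your proposal has a structural gap that the paper's approach is specifically designed to avoid: your building block $G = \BS(1,m)\times\Z$ depends on $m$. The statement demands a \emph{single} threshold $N$ such that for \emph{all} coprime pairs $(m,n)$ with $n>N$ and $\ln\ln n < m < \ln n$ the permutation $f$ exists. In your scheme, every constant produced by the soficity machinery---the finite set $S'$, the tolerance $\delta$, the size threshold---is extracted from $\H_k(G,\phi)$ and hence implicitly depends on $m$. Since $m$ ranges over an unbounded set as $n\to\infty$, you would need a uniformity statement across the whole family $\{\BS(1,m)\times\Z\}_m$ that you do not prove and that is not available from Theorem~\ref{ressolvable} alone.

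The paper resolves this by working with the single group $G=\Z\wr\Z$, which surjects onto every $\BS(1,m)$. Concretely, one defines explicit homomorphisms $\psi_{n,m}:\Z\wr\Z\to\Sym(\Z/n\Z)$ by $a\mapsto(x\mapsto x+1)$ and $b\mapsto(x\mapsto m^{-1}x)$, and shows (Lemma~\ref{lm-soficapproximationZwrZ}) that these are $(S,\delta,n)$-approximations whenever $|m|>C$ and $n>|m|^{C}$ for a constant $C=C(S,\delta)$ that does \emph{not} depend on $m$. The existence of $f$ then comes not from dissecting a generic sofic approximation of $\H_k(G,\phi)$ into cycles, as you attempt, but from the implication $(\ref{main-precise1})\Rightarrow(\ref{main-precise3})$ of Theorem~\ref{main-precise}: because $\Z\wr\Z$ is amenable, Theorem~\ref{amenable-conjugate} guarantees that \emph{every} sufficiently good approximation of $\Z\wr\Z$---in particular each $\psi_{n,m}$---admits an almost-conjugating $f$ of order dividing $k$. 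Your ad hoc cycle-tracking replaces this amenability step, and your acknowledged ``main obstacle'' (forcing a prescribed cycle structure on $\Z/n\Z$) is exactly the problem that Theorem~\ref{amenable-conjugate} handles.

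Finally, the hypothesis $\ln\ln n < m < \ln n$ enters in the paper simply as the translation of $m>C$ and $n>m^{C}$ into a condition on $n$ alone; your interpretation via $\mathrm{ord}_n(m)$ and a claimed $O(n/\mathrm{ord}_n(m))$ defect is not substantiated and is not how the bound arises.
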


\begin{theorem}[Helfgott--Juschenko~\cite{he-ju}, also Glebsky~\cite{gl}]
\label{GS version}
For all $m>2$ and all $\ep >0$, there exists $C$ such that for all $n>C$ coprime to $m$, there exists $f \in \Sym(\Z /n\Z)$
of order dividing $4$ with $f(x+1) = m f(x)$ for at least $(1-\ep)n$ values of $x \in \Z /n\Z$.
\end{theorem}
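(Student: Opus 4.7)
My approach is to construct $f$ explicitly. Let $d := \mathrm{ord}_n(m)$ denote the multiplicative order of $m$ modulo $n$. Since $n$ divides $m^d-1$, we have $d \ge \log_m(n+1)$, so once $n$ is sufficiently large (specifically, $n > m^{4/\varepsilon}$) I may assume $d > 4/\varepsilon$. This is the preliminary reduction that makes large blocks available.

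Next, I would partition $\Z/n\Z$ into consecutive arcs $A_1, A_2, \ldots, A_K$ with $K \le n/d + O(1) \le \varepsilon n/2$ and each $|A_j| \le d$, and on each arc $A_j = \{a_j, a_j+1, \ldots, a_j + L_j - 1\}$ define $f(a_j + i) = m^i c_j$ for a base value $c_j \in \Z/n\Z$ to be specified. This makes the relation $f(x+1) = m f(x)$ automatic for every $x$ interior to an arc, so the only possible bad points are the $K$ arc endpoints, giving at most $\varepsilon n$ bad points in total.

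What remains is to choose the $c_j$'s so that $f \in \Sym(\Z/n\Z)$ and $f^4 = \id$. For bijectivity, the partial orbits $\{c_j, m c_j, \ldots, m^{L_j - 1} c_j\}$ must partition $\Z/n\Z$: whenever $L_j = d$, such a set is exactly a coset of $\langle m \rangle$ in $(\Z/n\Z)^*$, so I would pick the $c_j$'s to represent distinct cosets of $\langle m \rangle$, and use a bounded number of short arcs to absorb non-unit elements (such as $0$) and any overflow.

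The delicate step, which I expect to be the main obstacle, is forcing $f^4 = \id$. The plan is to rewrite $f$ as the change of coordinates between the ``arc'' parametrization $x = jd + i$ and the ``orbit'' parametrization $y = m^i c_j$, so that the condition $f^4 = \id$ becomes a combinatorial identity about how additive arcs interlock with multiplicative cosets under repeated application of $f$. I would then group the cosets of $\langle m \rangle$ into quadruples and tune the $c_j$'s so that four applications of $f$ cycle through each quadruple, exploiting the hypothesis $m > 2$ to guarantee enough elbow room in the multiplicative structure of $(\Z/n\Z)^*$ for such a consistent pairing. Carrying out this assignment uniformly over all $n$ coprime to $m$, especially when the prime factorisation of $n$ interacts awkwardly with the subgroup $\langle m \rangle$, is the technical heart of the argument.
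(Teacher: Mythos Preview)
Your approach is entirely different from the paper's, and it has a genuine gap at exactly the point you flag as ``the delicate step.''

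The paper does not construct $f$ at all.  Instead it invokes two ingredients: Glebsky's theorem (proved here via Golod--Shafarevich) that for any prime $p\mid m-1$ the pro-$p$ completion of $\overline{\H}_4(\BS(1,m))$ is infinite and contains an embedded copy of $\BS(1,m)$; and the equivalence Theorem~\ref{main-precise}, which converts the existence of such a sofic quotient into the existence of the permutations $f$.  The argument is therefore abstract and non-constructive, and the paper says explicitly that it does not know how to exhibit such $f$ directly.

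Your construction gets the local recurrence cheaply, but the plan for forcing $f^4=\id$ does not hold together.  You propose to ``group the cosets of $\langle m\rangle$ into quadruples'' so that four applications of $f$ cycle through a quadruple.  But $f$ sends the additive arc datum $(j,i)$ to the element $m^i c_j$; to apply $f$ again you must locate $m^i c_j$ in the additive arc decomposition, and that location depends on the actual residue of $m^i c_j$ modulo $n$, not on which multiplicative coset it lies in.  So $f^2$, $f^3$, $f^4$ do not respect any quadruple structure on cosets of $\langle m\rangle$: the second step already scrambles the coset labelling in a way governed by the interplay of the additive and multiplicative structures of $\Z/n\Z$.  Nothing in your outline controls this interplay, and there is no visible mechanism by which ``tuning the $c_j$'' could force four iterations to close up for all (or even for most) $x$.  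This is precisely the tension the heuristic in Section~\ref{heuristics} identifies: the global condition $f^4=\id$ and the local recurrences behave, to first approximation, as independent constraints whose conjunction a random construction has essentially no chance of satisfying.  The paper's route via soficity sidesteps this obstacle entirely; your route runs straight into it and stops.
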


Theorems~\ref{heuristicapplication} and~\ref{GS version} both run counter to Helfgott and Juschenko's heuristics.  (However, neither theorem addresses the case $m=2$ directly, so the existence of the functions $f$ of Helfgott and Juschenko's theorem remains open.)

We will prove Theorem~\ref{heuristicapplication} in Section~\ref{ZwrZ section}. It will be apparent there that we could replace $\ln \ln n$ and $\ln n$ with other functions.

Theorems~\ref{HJ thm}--\ref{GS version} all arise from a relationship between soficity and the existence of particular permutations of $\Z/n\Z$ set out in Theorem~\ref{main-vague} below, which is a generalization of a result of Helfgott and Juschenko \cite{he-ju}.  In the case of Theorem~\ref{HJ thm}, soficity of $H_4$ is a hypothesis.  For Theorem~\ref{heuristicapplication}, we use the soficity of $\H_4(\Z \wr \Z)$ established as a consequence of Theorem~\ref{ressolvable}.  Theorem~\ref{GS version} uses a theorem of Glebsky~\cite{gl} which says that for $m \geq 3$, $\overline{\H_4}(\BS(1,m))$ has sofic quotients into which $\BS(1,m)$ embeds.

\begin{theorem} \label{main-vague}
Suppose $G$ is a group, $\phi$ is an isomorphism $B \to A$ between subgroups $A,B \leq G$, and $k \geq 1$ is an integer. The following two conditions are equivalent.
\begin{enumerate}

\item \label{main-vague2}
$\H_k(G,\phi)$ has a sofic quotient $Q$ such that the composition $G \to \H_k(G,\phi) \to Q$ is injective.

\item \label{main-vague1}
Sofic approximations of $G$ exist for which there are permutations of order dividing $k$ that almost conjugate the action of $A$ to the action of $B$.

\end{enumerate}
If $G$ is amenable, then these are also equivalent to:

\begin{enumerate}
\addtocounter{enumi}{2}

\item \label{main-vague3}
For all sofic approximations of $G$ into sufficiently large symmetric groups, there are permutations of order dividing $k$ which almost conjugate the action of $A$ to the action of $B$.
\end{enumerate}
\end{theorem}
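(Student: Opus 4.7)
The plan is to establish (\ref{main-vague2}) $\Leftrightarrow$ (\ref{main-vague1}) by standard sofic/ultraproduct manipulations, and then add the equivalence with (\ref{main-vague3}) under amenability by invoking the essential uniqueness of sofic approximations for amenable groups due to Elek--Szab\'o.

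For (\ref{main-vague2}) $\Rightarrow$ (\ref{main-vague1}), I would begin with sofic approximations $\psi_n : Q \to \Sym(X_n)$ of the given sofic quotient. Their restrictions along the embedding $G \hookrightarrow Q$ form sofic approximations of $G$. Setting $\tau_n = \psi_n(t)$, the relation $t^k = 1$ in $Q$ forces $\tau_n^k$ to be close to the identity in normalised Hamming distance, and the relations $b^t = \phi(b)$ force $\tau_n^{-1} \psi_n(b) \tau_n \approx \psi_n(\phi(b))$ on any prescribed finite subset of $B$. A standard rounding lemma then says that any permutation whose $k$th power is $\varepsilon$-close to the identity can be modified on a set of size $O(\varepsilon)$ into an order-dividing-$k$ permutation (redistribute the points lying on cycles whose length does not divide $k$ into cycles whose length does), and this yields the desired $\tau_n$ without disturbing the almost-conjugation estimate.

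For (\ref{main-vague1}) $\Rightarrow$ (\ref{main-vague2}), I would define $\tilde\psi_n(g) = \psi_n(g)$ for $g \in G$ and $\tilde\psi_n(t) = \tau_n$, extending multiplicatively to words in $G \cup \set{t, t^{-1}}$. Beyond the relations of $G$, the only defining relations of $\H_k(G,\phi)$ are $t^k = 1$ and $b^t = \phi(b)$ for $b \in B$; these are respectively satisfied exactly and almost by our data, so $\tilde\psi_n$ is an almost-homomorphism on any prescribed finite subset of $\H_k(G,\phi)$. Taking a nonprincipal ultraproduct produces a homomorphism from $\H_k(G,\phi)$ into a metric ultraproduct of finite symmetric groups; its image $Q$ is sofic, and $G \hookrightarrow Q$ because the original $\psi_n$ separated the elements of $G$.

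The implication (\ref{main-vague3}) $\Rightarrow$ (\ref{main-vague1}) is trivial. For (\ref{main-vague1}) $\Rightarrow$ (\ref{main-vague3}) under the hypothesis that $G$ is amenable, I would invoke the Elek--Szab\'o theorem that any two sofic approximations of an amenable group on finite sets of the same cardinality are asymptotically conjugate. Given the family $(\psi_n, \tau_n)$ from (\ref{main-vague1}) and an arbitrary sofic approximation $\psi'_N : G \to \Sym(X_N)$ with $|X_N|$ large, I would first produce a sofic approximation of $G$ on exactly $|X_N|$ points by pasting together copies of suitable $\psi_n$ in block-diagonal fashion and padding with identities; the corresponding block-diagonal product of the $\tau_n$, extended by the identity on the padded points, still has order dividing $k$ and still almost-conjugates $B$ to $A$. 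Elek--Szab\'o then yields a $\sigma_N \in \Sym(X_N)$ conjugating this assembled approximation to $\psi'_N$, and the conjugate of the assembled $\tau$ by $\sigma_N$ is the permutation required for (\ref{main-vague3}). The main obstacle is this size-matching: one must check that padding with fixed points preserves almost-homomorphism, separation, and almost-conjugation estimates, and that the block-diagonal construction can hit any sufficiently large integer $N$. Both points are standard but require careful bookkeeping.
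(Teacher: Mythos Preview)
Your proposal is correct and matches the paper's approach in all four implications: rounding $\psi(t)$ to order dividing $k$, the ultraproduct construction for the converse, and block-diagonal padding (the paper's Lemma~\ref{embed}) combined with the almost-conjugacy of sofic approximations of amenable groups (stated in the paper as Theorem~\ref{amenable-conjugate}, the Helfgott--Juschenko form of the Elek--Szab\'o/Kerr--Li result). The one step you underspecify is the passage from ``the defining relators are almost satisfied'' to ``$\tilde\psi_n$ is an almost-homomorphism on a prescribed finite $S \subseteq \H_k(G,\phi)$'': this requires, for each identity $s_1 s_2 = s_3$ with $s_i \in S$, bounding the number of conjugates of defining relators (relations in $G$, $t^k$, and $b^t\phi(b)^{-1}$) needed to derive it, and then choosing the finite $S_0 \subseteq G$ and the tolerance $\varepsilon$ for the $G$-approximation accordingly---this area-type bookkeeping is exactly the content of the paper's Lemma~\ref{to almost conjugating}.
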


We will present a precise version of this theorem in Section~\ref{generalcase proof}.

The natural map $G \to \H_k(G, \phi)$ employed in \emph{(\ref{main-vague2})} can fail to be injective.  Indeed, it is rarely injective when $k$ is $1$ or $2$. The case $k=3$ is delicate.
As for when $k \geq 4$, in Lemma~\ref{injective} we will give sufficient conditions for injectivity  and in Example~\ref{non-injective} will show that injectivity can fail.

We will prove Theorem~\ref{main-vague} in Section~\ref{generalcase proof}, building on the arguments in~\cite{he-ju}.  The equivalence between Conditions~\emph{(\ref{main-vague2})} and~\emph{(\ref{main-vague1})} is analogous to that between the  two definitions of soficity   outlined at the start of this article---see Proposition~\ref{equiv}.
The idea behind the implication~\emph{(\ref{main-vague1})}~$\thus$~\emph{(\ref{main-vague2})} is that the sofic approximations together with the almost-conjugating functions  can be assembled into a homomorphism from $\H_k(G,\phi)$  to an ultraproduct of finite symmetric groups with image $Q$. For the implication~\emph{(\ref{main-vague2})}~$\thus$~\emph{(\ref{main-vague1})}, we obtain the requisite sofic approximation  of    $S \subseteq G$ and the almost-conjugating  permutation from a sofic approximation for   the  image  in  $Q$ of a   suitably constructed finite subset $S' \subseteq \H_k(G,\phi)$ with    $S   \cup \set{t} \subseteq S'$.

The  equivalence of~\emph{(\ref{main-vague3})} is significantly more complicated.
The additional assumption that the group $G$ is amenable gives better control of the sofic approximations.  The key result   is a theorem which is  due to Helfgott and Juschenko~\cite{he-ju} in the   form we will use and has origins in Elek and Szabo~\cite{es2} and Kerr and Li~\cite{kl}.
It spells out a manner in which any two sofic approximations of an amenable group are almost conjugate.

In  Section~\ref{examples} we  give applications of Theorem~\ref{main-vague}.  We look  at $G=\Z^2= \langle a,b \mid ab=ba \rangle$ and $\phi: b \mapsto a$, which we view as an introductory example---in this case, $\H_k(G, \phi)$ will be a  right-angled Artin group.  We     review the case of $G=\BS(1,m) = \langle a,b \mid a^b =a^m \rangle$ addressed by Helfgott and Juschenko and by Glebsky,  where soficity of $\H_k(G, \phi)$ remains unknown for $m \geq 2$.  We present our most novel applications which are when $G$ is the  3-dimensional integral Heisenberg group $\Heis$,  or $\Z \wr \Z$, or  the free metabelian group $\mathcal{M}$ on two generators.  In these cases, $\H_k(G, \phi)$ will be  sofic by Theorem~\ref{ressolvable}. We explain how the $G= \Z \wr \Z$ case leads to Theorem~\ref{heuristicapplication}.

We do not know how to construct functions $f$ explicitly  satisfying the  conditions of Theorems~\ref{heuristicapplication} or~\ref{GS version}. In principle one could follow the constructions  in the proofs, however this would require  constructing several F{\o}lner sets for  $G$ and switching between sofic approximations several times. (In the case of  Theorem~\ref{heuristicapplication}, where the quotients could be taken to be the metabelian groups of Proposition~\ref{soficquotient}, sofic approximations could be constructed explicitly; for Theorem~\ref{GS version} the quotients are residually nilpotent and constructing explicit  sofic approximations is again possible, but significantly more difficult.)
It seems unlikely that this will lead to an enlightening description of $f$.

By the same token, we do not know how $C$ and $N$ depend  on $\varepsilon$ in Theorems~\ref{HJ thm}--\ref{GS version}. One could obtain explicit estimates from our proofs, but they   will be very weak.  We   give some examples in Remarks~\ref{dependance1}, \ref{dependance2}, and \ref{dependance3}.  Sufficiently strong  estimates (which may well not exist) could have important applications, including a proof that Higman's group $H_4$ is sofic.

\section{Soficity}
\label{soficity section}

The \emph{normalized Hamming distance} $d$  on the symmetric group $\Sym(n)$   is
$$
d(\rho,\sigma) \  = \   \dfrac 1n |\{ 1 \le i \le n \mid \rho(i) \ne \sigma(i) \}|.
$$
This metric is invariant under both the left and right  action of $\Sym(n)$---i.e.,
$$
d(\rho,\sigma) \ = \  d(\tau \rho \tau' ,\tau \sigma \tau' )
$$
for all $\rho,\sigma, \tau, \tau' \in \Sym(n)$.  It follows that:

\begin{lemma} \label{hamming lemma}
\renewcommand{\labelenumi}{\textup{(\roman{enumi})}}
For $\sigma, \tau, \mu, \sigma_1, \ldots, \sigma_m \in \Sym(n)$,
\begin{enumerate}
\item $d( \id,   \sigma_1 \cdots  \sigma_m) \leq \sum_{i=1}^m   d( \id,   \sigma_i)$, \label{hammingi}
\item $d(\tau^{-1} \sigma \tau, \id) = d(\sigma, \id)$,  \label{hammingii}
\item $d(\tau^{-1} \sigma \tau, \mu^{-1} \sigma \mu) \leq 2 d(\tau, \mu)$. \label{hammingiii}
\end{enumerate}
\end{lemma}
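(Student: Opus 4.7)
\textbf{Proof plan for Lemma~\ref{hamming lemma}.} The overarching strategy is to reduce everything to statements about the ``normalized support'' $d(\id,\sigma) = \frac{1}{n}|\{i : \sigma(i) \neq i\}|$, using the bi-invariance already stated before the lemma.

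For (i), the plan is to work with the (unnormalized) support $F_j := \{i : \sigma_j(i) \neq i\}$ of each $\sigma_j$. The key observation is that if a point $i$ lies outside $F_1 \cup \cdots \cup F_m$, then it is fixed by every $\sigma_j$, so applying the factors one by one shows that $(\sigma_1 \cdots \sigma_m)(i) = i$. Hence the support of the product is contained in $\bigcup_j F_j$, and a union bound divided by $n$ gives the claim. This step is the most ``combinatorial'' but is completely routine.

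For (ii), the plan is just to invoke bi-invariance: multiplying the pair $(\tau^{-1}\sigma\tau, \id)$ on the left by $\tau$ and on the right by $\tau^{-1}$ turns it into $(\sigma, \id)$, so the distances agree. (Equivalently, conjugate permutations have the same cycle type and hence the same number of fixed points.)

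For (iii), the plan is to insert an intermediate permutation via the triangle inequality:
\[
d(\tau^{-1}\sigma\tau,\ \mu^{-1}\sigma\mu) \ \le \ d(\tau^{-1}\sigma\tau,\ \mu^{-1}\sigma\tau) \ + \ d(\mu^{-1}\sigma\tau,\ \mu^{-1}\sigma\mu).
\]
For the first summand, right-multiply both arguments by $(\sigma\tau)^{-1}$, which by bi-invariance reduces it to $d(\tau^{-1},\mu^{-1})$; a further bi-invariance move (left-multiply by $\tau$ and right-multiply by $\mu$) rewrites this as $d(\mu,\tau) = d(\tau,\mu)$. For the second summand, left-multiply both arguments by $\sigma^{-1}\mu$, giving $d(\tau,\mu)$ directly. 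Summing yields the factor of $2$. The only mildly tricky point is choosing the intermediate permutation; everything else is bookkeeping with bi-invariance, so I do not anticipate any serious obstacle.
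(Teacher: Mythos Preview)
Your proposal is correct. The paper does not actually write out a proof: it simply states bi-invariance of $d$ and then asserts ``It follows that'' before the lemma, so your detailed argument fills in exactly what the paper leaves implicit. The only minor divergence is in (i): the paper's phrasing suggests the intended route is triangle inequality plus left-invariance (namely $d(\id,\sigma_1\cdots\sigma_m)\le d(\id,\sigma_1\cdots\sigma_{m-1})+d(\sigma_1\cdots\sigma_{m-1},\sigma_1\cdots\sigma_m)=d(\id,\sigma_1\cdots\sigma_{m-1})+d(\id,\sigma_m)$ and induction), whereas you argue directly via containment of supports; both are equally short and standard, and your (ii) and (iii) are precisely the bi-invariance manipulations the paper has in mind.
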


For $n \in \N$, $\d >0$, and  $S$  a finite subset of a group $G$, an \emph{$(S,\d,n)$-approximation}  is a map $\psi:  G \to \Sym(n)$ such that
\begin{itemize}
\item $d(\psi(g)\psi(h),\psi(gh)) < \delta$ for all $g,h \in S$ such that $gh \in S$, and
\item $d(\psi(g),\mathrm{id}) > 1 - \delta$ for all $g \in S \ssm \set{e}$.
\end{itemize}

(That $\psi$ is defined on all of $G$,  instead of just on $S$, is a technical convenience.  Its values on  $G \ssm S$ are irrelevant  to the definition.)

A \emph{filter} $\mathcal F$ on a set $I$ is a nonempty set of subsets of $I$ such that $\emptyset\not\in \mathcal F$; for all $U, V \in \mathcal U$, $U \cap V \in \mathcal F$; and if $U \in \mathcal F$ and $U \ins V$, then $V \in \mathcal F$. An \emph{ultrafilter} $\mathcal U$ on $I$ is a maximal filter; equivalently, for all $U \ins I$, either $U \in \mathcal U$ or $(I \smallsetminus U) \in \mathcal U$.

Suppose $\mathcal U$ is an ultrafilter on a set $I$.  To each ${i} \in I$ associate some $n_{i} \in \N$.  For    $x = (x_{i})_{i \in I}$ and $y = (y_{i})_{i \in I}$ in the direct product
$\prod_{i \in I} \Sym(n_{i})$, we write $x \approx_{\mathcal U} y$ when $\{i \in I \mid d(x_{i},y_{i}) < \delta\} \in \mathcal U$ for all $\delta > 0$.
Let $\id = (\id_{n_{i}})_{i \in I}$. Define $\mathcal N := \set{ x \in \prod_{i \in I} \Sym(n_{i}) \mid x \approx_{\mathcal U} \id  } $, which is called the normal subgroup of \emph{infinitesimals}. Define  the \emph{(metric) ultraproduct} $\prod_{\mathcal U} \Sym(n_{i}) := \left(\prod_{i \in I} \Sym(n_{i}) \right) \big/ \mathcal N$.    See \cite{pe} for further background.

A group $G$ is \emph{sofic} when it satisfies either of the conditions of the following proposition.

\begin{proposition}
\label{equiv}
For a group $G$, the following are equivalent.
\begin{enumerate}
\item \label{sof3}
    The group $G$  is isomorphic to a subgroup of some metric ultraproduct of finite symmetric groups---that is, there exist  an ultrafilter $\mathcal U$ on a set $I$, natural numbers $\set{n_{i}}_{i \in I}$, and an injective homomorphism
$$
G \into \prod_{\mathcal U} \Sym(n_{i}).
$$
\item \label{sof1}
    For all finite subsets $S \ins G$ and   all $\d > 0$, there exists an $(S,\d,n)$-approximation for some $n$.
\end{enumerate}
\end{proposition}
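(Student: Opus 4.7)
The plan is to verify the two implications, switching between representatives of ultraproduct equivalence classes and explicit finite approximations.

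For \emph{(\ref{sof3})}~$\thus$~\emph{(\ref{sof1})}: given an embedding $\iota: G \into \prod_{\mathcal U} \Sym(n_i)$, a finite $S \ins G$, and $\delta > 0$, choose for each $g \in G$ a lift $(\psi_i(g))_{i \in I}$ representing $\iota(g)$. Since $\iota$ is a homomorphism, for each $g, h \in S$ with $gh \in S$ and each $\delta' > 0$, the set $\set{i : d(\psi_i(g)\psi_i(h), \psi_i(gh)) < \delta'}$ lies in $\mathcal U$. Since $\iota$ is injective, the value $\eta_g := \lim_{\mathcal U} d(\psi_i(g), \id)$ is strictly positive for each $g \in S \ssm \set{e}$. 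Setting $\eta := \tfrac{1}{2} \min_{g \in S \ssm \set{e}} \eta_g$, the set $\set{i : d(\psi_i(g), \id) > \eta}$ is in $\mathcal U$ for each such $g$. This already gives the weak form of approximation; to upgrade to $d(\psi_i(g), \id) > 1 - \delta$, amplify by replacing $\psi_i$ with the $N$-fold tensor power $\psi_i^{\oby N}: G \to \Sym(n_i^N)$ acting diagonally on $\set{1, \ldots, n_i}^N$. A permutation $\sigma$ with fixed-point fraction $1 - \alpha$ yields $\sigma^{\oby N}$ with fixed-point fraction $(1-\alpha)^N$, and any multiplicativity defect $\beta$ transforms as $\beta \mapsto 1 - (1 - \beta)^N$. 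Choose $N$ with $(1 - \eta)^N < \delta$, then choose $\delta' > 0$ with $1 - (1 - \delta')^N < \delta$, and pick any $i$ in the (nonempty) intersection of the finitely many ultrafilter sets above for this $\delta'$: then $\psi_i^{\oby N}$ is the required $(S, \delta, n_i^N)$-approximation.

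For \emph{(\ref{sof1})}~$\thus$~\emph{(\ref{sof3})}: let $I$ be the set of pairs $i = (S, \delta)$ with $S \ins G$ finite and $\delta > 0$, and for each such $i$ choose an $(S, \delta, n_i)$-approximation $\psi_i: G \to \Sym(n_i)$ from the hypothesis. The ``tails'' $T_{S_0, \delta_0} := \set{(S, \delta) \in I : S_0 \ins S, \ \delta \leq \delta_0}$ form a filter base, because $T_{S_0, \delta_0} \cap T_{S_0', \delta_0'} = T_{S_0 \cup S_0', \min(\delta_0, \delta_0')}$ is again a nonempty tail; extend to an ultrafilter $\mathcal U$ on $I$. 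Define $\Psi: G \to \prod_{\mathcal U} \Sym(n_i)$ by $\Psi(g) = [(\psi_i(g))_i]$. For any $g, h \in G$ and any $\eta > 0$, the tail $T_{\{e, g, h, gh\}, \eta}$ is in $\mathcal U$ and on it $d(\psi_i(g) \psi_i(h), \psi_i(gh)) < \eta$, so $\Psi(g) \Psi(h) = \Psi(gh)$; for $g \neq e$, the tail $T_{\{g\}, 1/2}$ is in $\mathcal U$ and on it $d(\psi_i(g), \id) > 1/2$, so $\Psi(g) \neq \id$.

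The main obstacle is the amplification step in \emph{(\ref{sof3})}~$\thus$~\emph{(\ref{sof1})}: the strong injectivity requirement $d(\psi(g), \id) > 1 - \delta$ does not follow directly from the existence of the embedding, because a non-identity element of a metric ultraproduct of symmetric groups may have any positive distance from the identity, not necessarily close to $1$. The tensor-power construction drives the fixed-point fraction of $\psi_i^{\oby N}(g)$ down to $0$ while, by first selecting a representative in the ultrafilter with arbitrarily small multiplicativity defect $\delta'$ (available since $\iota$ is exactly multiplicative), keeping the transformed multiplicativity errors $1 - (1-\delta')^N$ below $\delta$. The remaining steps are routine manipulations with ultrafilters and with the bi-invariant normalized Hamming metric recorded in Lemma~\ref{hamming lemma}.
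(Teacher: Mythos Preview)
Your proof is correct and follows essentially the same route as the paper's (sketched) argument: lift the embedding and intersect finitely many ultrafilter sets for \emph{(\ref{sof3})}~$\thus$~\emph{(\ref{sof1})}, and build the ultraproduct over the directed set of pairs $(S,\delta)$ for \emph{(\ref{sof1})}~$\thus$~\emph{(\ref{sof3})}. The one place you go further is in spelling out the amplification trick as an $N$-fold diagonal action on $\set{1,\ldots,n_i}^N$, with the correct transformation laws $1-\alpha \mapsto (1-\alpha)^N$ for the fixed-point fraction and $\beta \mapsto 1-(1-\beta)^N$ for the multiplicativity defect; the paper simply names the trick and defers to the references.
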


\begin{proof}
Here is a sketch.  Details are in~\cite{pe,es}.

For~\emph{(\ref{sof3})}~$\thus$~\emph{(\ref{sof1})},
 a  homomorphic embedding $G \into \prod_{\mathcal U} \Sym(n_{i})$  can be lifted (non-uniquely) to a map $\psi = (\psi_i): G \to \prod_{i \in I} \Sym(n_{i})$, where $\psi_i: G \to \Sym(n_{i})$. However, $\psi$ may fail to be a group homomorphism. For  all  $a,b \in G$,
   $\psi(a) \psi(b) \psi(ab)^{-1}$ is an infinitesimal.  This implies that for each finite set $S$ and each $\d > 0$, the set of $i$
such that $\psi_i$  is an $(S,\d,n_i)$-approximation is in the ultrafilter $\mathcal{U}$, and so is not empty.
The second condition of the   approximation is not immediately satisfied---one only gets that $d(\psi_i(g), \id) > \d$ for $g \in S \ssm \set{e}$. An `amplification trick'
improves this to  $1 -\d$.

For \emph{(\ref{sof1})}~$\thus$~\emph{(\ref{sof3})},  let $I = \set{(S, \delta) \mid \text{ finite } S \subseteq G, \ \delta >0 }$.  For $(S, \delta) \in I$, define $$\overline{(S, \delta)} \ := \  \set{(S', \delta') \in I \mid \text{ finite } S' \supseteq S, \ \delta' \leq \delta }.$$
The family  $\mathcal{F}$ of all subsets $\overline{(S, \delta)}$  of $I$ where $(S, \delta) \in I$  enjoys the finite intersection property since $\bigcap_{i=1}^k  \overline{(S_i, \delta_i)}  \ = \  \overline{(\bigcup_{i=1}^k S_i, \max_{i=1}^k \delta_i)}$. So there is  an ultrafilter $\mathcal{U}$ on $I$ with $\mathcal{F} \subseteq \mathcal{U}$.  For all $i = (S, \delta) \in I$,  let $\psi_i: G \to \Sym(n_i)$ be an $(S,\d,n_i)$-approximation.   These maps combine in   $g \mapsto (\psi_i(g))_{i \in I}$ to induce a monomorphism $G \into \prod_{\mathcal U} \Sym(n_{i})$: it is a homomorphism because  for all $g, h \in G$, $$(\psi_i(g)\psi_i(h)\psi_i(gh)^{-1})_{i \in I} \in \mathcal{N}$$ since for all $\delta >0$, $\set{i \in I \mid d(\psi_i(g)\psi_i(h),\psi_i(gh)) <\delta} \in \mathcal{U}$ as it is a superset of $\overline{(\set{g,h,gh}, \delta)} \in \mathcal{U}$; and it is injective because likewise for $\delta >0$ and $g \in G \ssm \set{e}$, the set $$\set{i \in I \mid d(\psi_i(g), \id_{n_i}) > 1-\delta } \in \mathcal{U}$$  and so  $(\psi_i(g))_{i \in I} \notin \mathcal{N}$.
\end{proof}

Given  that the   formulation~\emph{(\ref{sof1})}  of soficity is in terms of finite subsets of $G$, it is immediate  that a group is sofic if and only if its finitely generated subgroups are sofic.  Also, subgroups of finite symmetric groups are sofic, so all finite groups are sofic. This generalizes as follows. A group $G$ is \emph{residually P} if for every   $x \in G \ssm \set{e}$, there is some  quotient $\ph_x : G \onto H_x$ such that $\ph_x(x)$ is not trivial and $H_x$ satisfies condition $P$.  Residually finite groups are sofic:   if $S  \subseteq G$ is finite and $\ph_x : G \onto H_x$  are as per the definition with $H_x$ finite, then   $\ph :=  \bigoplus_{x \in S \ssm \set{e}} \ph_x$ is a faithful map to a finite group and composes with a map to  some $\Sym(n)$ to give an $(S,0,n)$-sofic approximation. More  generally, residually sofic groups are sofic.

Amenable  groups are also sofic.
A group $G$ is \emph{amenable} when it satisfies the \emph{F\o lner condition}:
for all finite subsets $S \ins G$ and for all $\ep > 0$, there is a finite subset $\Phi \ins G$ such that for each $g \in S$, $|g\Phi \triangle \Phi| < \ep|\Phi|$ (where $\triangle$ denotes symmetric difference: $A \triangle B = (A \ssm B) \cup (B \ssm A) = (A \cup B) \ssm (A \cap B)$).   For every $g \in S$, the map $\Phi \to \Phi$ given by $x \mapsto gx$ is well-defined on all but $\ep|\Phi|$ elements of $\Phi$. Extend to the rest of $\Phi$ arbitrarily so that the map is a bijection, and then each element of $g$ corresponds to an element of the symmetric group $\Sym(|\Phi|)$. The function identifying each $g$ with the corresponding map gives an $(S,2\ep,|\Phi|)$-approximation of $G$.  More details are in \cite{pe}.

It then follows that residually amenable and, in particular, residually solvable groups are sofic (a fact we will use for Corollary~\ref{so sofic cor}).

The class of sofic groups  enjoys various closure properties.  These are all sofic: graph products (e.g.\ free or direct products) of sofic groups  \cite{chr}, amalgamated free products or HNN extensions of sofic groups  over amenable groups  \cite{co-dy,  es2, pu},   wreath products of sofic groups \cite{hs}, groups with finite index sofic subgroups,   limits of sofic groups in the space of marked groups (but not all finitely generated sofic groups are limits of amenable groups \cite{co}),  locally sofic groups (e.g.\ groups  locally embeddable into sofic groups or  direct limits of sofic groups).    If $G$ has a sofic normal subgroup $N$ such that $G/N$ is amenable, then $G$ is sofic.  Whether the same conclusion can be drawn when $N$ is amenable and $G/N$ is sofic, is open.

Soficity relates to a number of outstanding open problems.   In 1973 Gottschalk defined a group $G$ to be \emph{surjunctive} when for every finite set $S$ and for $S^G$ the set of functions $G \to S$, every continuous $G$-equivariant injective function $f: S^G \to S^G$ is also surjective.  Gottschalk  conjectured  that all groups are surjunctive.  A group is hyperlinear when every finite subset can be well approximated in a unitary group with the normalized Hilbert--Schmidt norm.     Connes' Embedding Conjecture states that every group is hyperlinear.
Kaplansky's Direct Finiteness Conjecture is that if  $G$ is a group and  $K$ is a field and  if $a, b \in K[G]$ satisfy $ab=1$, then $ba=1$.
Sofic groups are surjunctive~\cite{gr, we},  are hyperlinear (since finite permutation groups embed in unitary groups), and satisfy the Direct Finiteness Conjecture~\cite{es}.   The most recent progress is the construction by De~Chiffre, Glebsky,  Lubotzky, and Thom  of  groups that do not satisfy an alternate version of the hyperlinear condition where the Hilbert--Schmidt norm is not  normalized  \cite{cglt}.

For further background, we refer to the surveys~\cite{ca-lu,pe}.

\section{Variations on Higman's group}
\label{Generalizing section}

Our notation is $b^a = a^{-1} b a$ and $[a,b] = a^{-1} b^{-1} a b$.

As explained in Section~\ref{results}, for a group $G$, subgroups $A$ and $B$, an isomorphism  $\phi: B \to A$, and a $k \in \N$, we define $G_i$, where $1 \leq i \leq k$, to be copies of $G$ and  $A_i, B_i \leq G_i$ to be copies of its subgroups $A$ and $B$.  Then $\phi$ induces an isomorphism  $\phi_i: B_i \to A_{i+1}$   and we define
$$
\overline{\H}_k(G, \phi)  \  := \ \langle G_1, \ldots, G_k \mid  b_{i} = \phi_i(b_i) \text{ for all } i \text{ and all } b_i \in B_i  \rangle.
$$
Thus, $\overline{\H}_k(G, \phi)$ is the quotient of the free product of $k$ copies of $G$ in which $B$ in the $i$-th is identified with $A$ in the $(i+1)$-st   for $i = 0, \ldots, k-1$ (indices modulo $k$).

By construction there are maps $\iota_1,\dots, \iota_k$ from the group $G$ to $\overline \H_k(G,\phi)$.   We regard $\iota:=\iota_1$ as the \emph{natural map} $G \to \overline \H_k(G,\phi)$.  We will often work in settings where these maps are injective, and then for simplicity we will suppress them and consider $G$ as a subgroup of $\overline \H_k(G,\phi)$ via  $\iota$.

For example, if $G = \langle a_1, a_2 \mid R \rangle$ is a 2-generator group such that  $a_1$ and $a_2$ have the same order,  then  $\overline{\H}_k(G, \phi)$, where
$\phi : a_2 \mapsto a_1$, is the \emph{cyclically presented group}
$$
\langle a_1, \ldots, a_k \mid \sigma^i(r) \, ; \  r \in R, \  i = 0, \ldots, k-1 \rangle,
$$
where $\sigma$ cycles    the indices of the letters of $r$.

The semi-direct product of  $\overline{\H}_k(G, \phi)$ with the cyclic group $C_k$ of order $k$ in which a generator $t$ of $C_k$ conjugates $G_i$ to $G_{i+1}$ (indices mod $k$) is
$$
\H_k(G, \phi)   \ = \ \langle G,   t \mid  t^k = 1,  \ b^t = \phi(b);  \forall b \in B  \rangle.
$$
Then $\overline\H_k(G, \phi)$ is the normal closure of  $\iota(G)$  in $\H_k(G, \phi)$   and is the kernel of $\H_k(G, \phi) \to C_k$.

In the case when $G$ is a group generated by two elements $a,b \in G$ of the same order, with $A = \langle a \rangle$, $B = \langle b \rangle$, and $\phi:B \to A$ given by $\phi(b) = a$, we will  write $\H_k(G)$  and  $\overline \H_k(G)$ in place of $\H_k(G,\phi)$   and $\overline \H_k(G,\phi)$.

The cases $k=1,2$ are degenerate:
\begin{lemma}
$\overline{\H}_1(G, \phi)$ is a quotient of $G$.  If $G$ is generated by the subgroups $A$ and $B$, then $\overline{\H}_2(G, \phi)$ is a quotient of $G$.
\end{lemma}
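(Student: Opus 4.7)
The plan is to read the $k=1$ case directly off the presentation, then use the cyclic gluing relations to collapse $\overline{\H}_2(G,\phi)$ down to a single copy of $G$.

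For $k=1$, the construction produces only a single copy $G_1$ of $G$, and the indexing modulo $k$ means that $\phi_1$ is an isomorphism $B_1 \to A_1$ induced by $\phi$. Thus
\[
\overline{\H}_1(G,\phi) \ = \ \langle G_1 \mid b_1 = \phi_1(b_1) \ \forall b_1 \in B_1 \rangle,
\]
which is visibly $G$ modulo the normal closure of $\set{b\phi(b)^{-1} \mid b \in B}$. The natural map $\iota_1:G \to \overline{\H}_1(G,\phi)$ is therefore surjective, which is all that is needed.

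For $k=2$, let $\iota_1,\iota_2:G\to \overline{\H}_2(G,\phi)$ be the two natural maps, so $\overline{\H}_2(G,\phi) = \langle \iota_1(G) \cup \iota_2(G) \rangle$. The relations defining $\overline{\H}_2(G,\phi)$ identify $B_1$ with $A_2$ (via $\phi_1$) and $B_2$ with $A_1$ (via $\phi_2$, since indices are mod $2$). So every element of $A_1 \subseteq \iota_1(G)$ lies in $B_2 \subseteq \iota_2(G)$, and every element of $B_1 \subseteq \iota_1(G)$ lies in $A_2 \subseteq \iota_2(G)$. Under the hypothesis that $G$ is generated by $A \cup B$, the image $\iota_1(G)$ is generated by $\iota_1(A) \cup \iota_1(B) = A_1 \cup B_1 \subseteq \iota_2(G)$, so $\iota_1(G) \subseteq \iota_2(G)$; by the same reasoning with the roles swapped, $\iota_2(G) \subseteq \iota_1(G)$. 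Hence $\iota_1(G) = \iota_2(G) = \overline{\H}_2(G,\phi)$, and $\iota_1$ exhibits $\overline{\H}_2(G,\phi)$ as a quotient of $G$.

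There is essentially no obstacle; the lemma is a direct unpacking of the presentations. The only mildly subtle point is to remember that indices are read modulo $k$, so that for $k=1$ the map $\phi_1$ lands back in $A_1$ inside the unique copy $G_1$, and for $k=2$ the map $\phi_2$ identifies $B_2$ with $A_1$ rather than with a further copy.
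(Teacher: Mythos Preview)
Your argument is correct. The paper states this lemma without proof, treating both claims as immediate from the definition of $\overline{\H}_k(G,\phi)$; your proposal simply makes explicit the unpacking of the presentation that the paper leaves to the reader, and does so accurately in both cases.
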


For large $k$ one expects   $G$ generally to embed in $\H_k(G, \phi)$, but this can fail:

\begin{ex}
\label{non-injective}
When $G=B=\Z$, $A=2\Z$, and $\phi$ is multiplication by $2$,  $\H_k(G,\phi)$ is finite for all $k$, and so
$\iota: G \not\hookrightarrow
  \H_k(G, \phi)$.
\end{ex}

When $k\geq 4$,  here is a sufficient condition:
\begin{lemma}
\label{injective}
If $A \cap B = \set{1}$ and $k \geq 4$, then $G$ and $A \ast A$  both embed in $\H_k(G,\phi)$. In particular, if $G \neq \set{1}$, then  $\H_k(G,\phi)$ is not amenable.
\end{lemma}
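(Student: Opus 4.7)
The plan is to realize $\overline{\H}_k(G, \phi)$ as an ordinary amalgamated free product
$$
\overline{\H}_k(G, \phi) \ \cong \ \Xi_1 \ast_{A \ast A} \Xi_2,
$$
where $\Xi_1$ and $\Xi_2$ are iterated amalgamated free products of copies of $G$ along complementary arcs of the cycle. The embeddings of $G$ and $A \ast A$ will then follow from the standard fact that, in an amalgamated free product, both the vertex groups and the edge group embed.

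Fix any $m$ with $2 \leq m \leq k-2$ (possible since $k \geq 4$) and set
$$
\Xi_1 \ := \ G_1 \ast_{B_1 = A_2} G_2 \ast_{B_2 = A_3} \cdots \ast_{B_{m-1} = A_m} G_m,
$$
$$
\Xi_2 \ := \ G_{m+1} \ast_{B_{m+1} = A_{m+2}} \cdots \ast_{B_{k-1} = A_k} G_k.
$$
Each $\Xi_i$ is a sequential amalgamated product along a path, so by repeated application of the normal form theorem every vertex group $G_j$ embeds in the appropriate $\Xi_i$. The defining presentation of $\overline{\H}_k(G, \phi)$ differs from that of $\Xi_1 \ast \Xi_2$ by only the two remaining cycle-closing identifications $B_m = A_{m+1}$ and $B_k = A_1$.

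Next I would show by induction on $m$ that the hypothesis $A \cap B = \{1\}$ forces
$$
\langle A_1, B_m \rangle_{\Xi_1} \ = \ A_1 \ast B_m \ \cong \ A \ast A,
$$
and symmetrically $\langle A_{m+1}, B_k \rangle_{\Xi_2} \cong A \ast A$. The key input is the standard fact that in an amalgamated product $H_1 \ast_C H_2$, if $X \leq H_1$ and $Y \leq H_2$ satisfy $X \cap C = Y \cap C = \{1\}$, then $\langle X, Y\rangle = X \ast Y$ (by normal form). At the base case $m = 2$ this applies with $C = B_1 = A_2$, $X = A_1$, $Y = B_2$, and the trivial-intersection conditions are exactly $A \cap B = \{1\}$. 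For the inductive step, the hypothesis that $A_1 \ast B_{m-1}$ is a free product inside $\Xi_{m-1}$ yields $A_1 \cap B_{m-1} = \{1\}$ in $\Xi_{m-1}$, which together with $B_m \cap A_m = \{1\}$ in $G_m$ lets me apply the fact again when amalgamating $\Xi_{m-1}$ with $G_m$ across $B_{m-1} = A_m$.

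Finally, the cycle-closing relations $B_m = A_{m+1}$ and $B_k = A_1$ together identify the subgroup $A_1 \ast B_m \leq \Xi_1$ with $B_k \ast A_{m+1} \leq \Xi_2$ via the isomorphism induced by $\phi$ and $\phi^{-1}$ (with the two free factors swapped), exhibiting $\overline{\H}_k(G,\phi)$ as $\Xi_1 \ast_{A \ast A} \Xi_2$. Since the vertex and edge groups of an amalgamated free product embed, both $G$ (via either $\Xi_i$) and $A \ast A$ embed in $\overline{\H}_k$, hence in $\H_k$. For non-amenability when $G \neq \{1\}$, the conclusion is immediate whenever $A \ast A$ already contains a non-abelian free subgroup (which happens, for instance, when $A$ has an element of infinite order or $|A| \geq 3$); in the remaining case $A = \{1\}$, the amalgamations degenerate and $\overline{\H}_k = G^{\ast k}$, which contains $F_2$ for $k \geq 3$ and $G \neq \{1\}$. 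The main obstacle is the inductive claim that $A_1 \ast B_m$ sits as a free product inside each $\Xi_m$: one must carry the trivial-intersection hypothesis carefully through each stage of the iterated amalgamation.
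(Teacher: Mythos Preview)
Your argument is essentially identical to the paper's: split the cycle into two arcs (the paper takes the specific case $m = k-2$, so $J = \Xi_1$ has $k-2$ factors and $K = \Xi_2$ has two), show by induction using $A \cap B = \{1\}$ that $\langle A_1, B_m \rangle \cong A \ast B_m$ in each arc, and then exhibit $\overline{\H}_k$ as $\Xi_1 \ast_{A \ast A} \Xi_2$. Your non-amenability case split misses $|A| = 2$ (where $A \ast A$ is infinite dihedral, hence amenable), but the paper's sentence ``If $A \neq \{1\}$ then the subgroup $A \ast A$ prevents $\H_k(G,\phi)$ from being amenable'' has the very same gap, so you are not diverging from the paper here.
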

\begin{proof}
Let $J = G_1 \ast_{\phi_1} G_2 \ast_{\phi_2} \cdots \ast_{\phi_{k-3}} G_{k-2}$, and let $K = G_{k-1} \ast_{\phi_{k-1}} G_k$. Since $A_1 \cap B_1 = A_1 \cap A_2 = \{1\}$  and   $A_2 \cap B_2 = B_1 \cap B_2 = \{1\}$, the subgroup generated by $A_1$ and $B_2$ in $G_1 \ast_{\phi_1} G_2$ is   $A_1 \ast B_2$. Inductively, the same holds for the subgroup generated by $A_1$ and $B_{k-2}$ in $J$, and similarly for the subgroup generated by $A_{k-1}$ and $B_k$ in $K$. Then $\H_k(G,\phi)$ is the amalgamated free product of $J$ and $K$ along the subgroup $\langle A_1,B_{k-2} \rangle = A_1  \ast   B_{k-2}$, which is identified with $B_k \ast A_{k-1}$ via identifying $A_1$ with $B_k$ and   $B_{k-2}$ with $A_{k-1}$. Thus $A \ast A$ embeds in $\H_k(G,\phi)$ since $A \ast A \cong A_1 \ast B_{k-2} \le G$. Meanwhile $G_1 \le J$, so $G_1 \le \H_k(G,\phi)$ as well,  and  the canonical map $G \to \H_k(G, \phi)$ is   injective.

If $A \neq \set{1}$ then the subgroup  $A \ast A$ prevents $\H_k(G, \phi)$  from being amenable.  If $A = \set{1}$, then $\H_k(G)$ is  a free product.
\end{proof}

The case $k=3$ is trickier.  Sometimes $G$ does not embed in $\H_3(G,\phi)$ because the latter group is very small---for example,  $\overline{\H}_3(\textup{BS}(1,2)) = \set{1}$---but it is also possible that $G$ embeds in $\H_3(G,\phi)$, which is the case for most other examples considered in this paper.

\section{Soficity via residual solvability}
\label{resfinite section}

Here we prove Theorem~\ref{ressolvable} by an approach which is similar to our proof  of  Lemma~\ref{injective}: it is based on viewing the amalgamated products as a combination of a free product and a semidirect product.

We have  that $G$ is residually solvable and has subgroups $A$ and $B$ for which there is an isomorphism $\phi: B \to A$, and that there exists a group homomorphism
$\pi: G \to A \times B$ such that  $\pi(a) = (a,1)$ for all $a \in A$ and $\pi(b) = (1,b)$ for all $b \in B$.  We aim  to
show that $\H_k(G, \phi)$  and $\overline{\H}_k(G, \phi)$ are also residually solvable for all $k \geq 4$.

Define  $G_A= \pi^{-1}(1,*)$ or, equivalently,
$G_A = \ker ( \phi_A \circ \pi)$, where $\phi_A$ is the projection   $A\times B \to A$. So $G_A$ is a normal subgroup of $G$ and $G/G_A \simeq A$.
The hypothesis that  $\pi(a) = (a,1)$ for all $a \in A$   implies that $A$ is a complement of $G_A$ in $G$, and so $G$ can be expressed as a  semidirect product $G = A \ltimes G_A$.
And  $B \subseteq G_A$  because $\pi(b) = (1,b)$ for all $b \in B$.
Likewise, $G = B \ltimes G_B$ with    $A\subseteq G_B$.

 As (3)--(5) of the following examples show, the hypotheses of  Theorem~\ref{ressolvable} do not imply that $A$ and $B$ commute.  Rather, they imply that $[A,B] \subseteq G_A \cap G_B = \ker \pi$.

\begin{Examples}
\label{example groups}
In each case take $A= \langle a \rangle = \Z$ and $B= \langle b \rangle = \Z$:
\begin{enumerate}
\item  $G=\Z^2 = \langle a, b \mid ab=ba \rangle$.  Take $\pi$ to be the identity.  The semi-direct products are   direct products $\Z \times \Z$.
\item  $G=\BS(1,m) = \langle a, b \mid a^b=a^m \rangle$.  In this case there is no map $\pi$ for $m \neq 1$ because $[a,b] =a^{m-1}$, and so cannot be in $\ker \pi$.
\item $G = \Heis = \langle a, b \mid [a,[a,b]] = [b,[a,b]] =1 \rangle$, the three-dimensional integral Heisenberg group.  Take $\pi$ to be the map onto $\Z^2 = \langle a, b \mid ab=ba \rangle$ quotienting by the center $\langle [a,b] \rangle$ of $\Heis$.  Then $G_A = \langle b, [a,b] \rangle \simeq  \Z^2$ and $G_B = \langle a, [a,b] \rangle \simeq  \Z^2$.
\item $G = \Z \wr \Z = \left\langle a, b \, \left|  \,  \left[a^{b^i}, a^{b^j}\right]=1 \text{ for all  }  i, j   \right.  \right\rangle$, which is $\Z   \ltimes \bigoplus_{i \in \Z} \Z =    \langle b \rangle \ltimes \bigoplus_{i \in \Z} \langle a_i  \rangle$ where $a_i = a^{b^i}$ and $b$ acts so as to map $a_i \mapsto a_{i+1}$.  Again, take $\pi$ to be the abelianization map onto $\Z^2 = \langle a, b \mid ab=ba \rangle$.   Then $G_B$ is the kernel of the map  $\Z \wr \Z \onto \langle b \rangle$ given by quotienting by $a$,  which is $\bigoplus_{i \in \Z} \Z =     \bigoplus_{i \in \Z} \langle a_i  \rangle$.
And $G_A$  is the kernel of the map  $\Z \wr \Z \onto \langle a \rangle$ given by quotienting by $b$, which is   $\langle b \rangle \ltimes \bigoplus_{i \in \Z} \langle a_i^{-1}a_{i+1}  \rangle$ and is isomorphic to $G$.
\item  $G =  \Met = \left\langle a, b \  \left| \  \left[[a, b],[a,b]^{a^ib^j}\right] = 1 \ \forall i,j \in \Z \right. \right\rangle
$, the free metabelian group on two generators.  Again, we take $\pi$ to be the abelianization map onto $\Z^2 = \langle a, b \mid ab=ba \rangle$.
\end{enumerate}
\end{Examples}

We will use the following  description of amalgamated products over subgroups which have a normal complement.
\begin{lemma}
\label{lm-amalgamated-as-semidirect}
Suppose $G_1$ and $G_2$ are groups having subgroups $H_1$ and $H_2$ respectively with normal complements---i.e., $G_1 = H_1 \ltimes N_1$ and
$G_2 = H_2 \ltimes N_2$ for some $N_1$ and $N_2$. For any isomorphism $\phi: H_1 \to H_2$,
the amalgamated product $G_1 \ast_\phi G_2$ can be expressed as a semidirect product  $H_1 \ltimes (N_1 \ast N_2)$, where the action of $H_1$ on $N_2$ comes from that of $H_2$ via the isomorphism $\phi$.
\end{lemma}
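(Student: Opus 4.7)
The plan is to construct mutually inverse homomorphisms between $G_1 \ast_\phi G_2$ and $\Gamma := H_1 \ltimes (N_1 \ast N_2)$, using the universal properties of the amalgamated free product and of free products. Throughout, the action of $H_1$ on $N_1$ in $\Gamma$ is the one inherited from $G_1 = H_1 \ltimes N_1$, while the action on $N_2$ is defined by $h \cdot n := \phi(h)\, n\, \phi(h)^{-1}$ computed in $G_2$. The action on $N_1 \ast N_2$ is then the unique extension compatible with the free product.

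First I would build $\Psi : G_1 \ast_\phi G_2 \to \Gamma$. Define $\Psi_1 : G_1 \to \Gamma$ by $h_1 n_1 \mapsto (h_1, n_1)$, which is a homomorphism because $G_1 = H_1 \ltimes N_1$ sits inside $\Gamma$ as the obvious subgroup. Define $\Psi_2 : G_2 \to \Gamma$ by $h_2 n_2 \mapsto (\phi^{-1}(h_2), n_2)$; checking this is a homomorphism reduces to verifying that $\phi^{-1}(h_2)$ conjugates $n_2$ in $\Gamma$ the same way $h_2$ does in $G_2$, which is precisely how the action on $N_2$ was defined. The two maps agree on the amalgamated subgroup, since $h_1 \in H_1 \subset G_1$ and $\phi(h_1) \in H_2 \subset G_2$ both map to $(h_1,1) \in \Gamma$, so the universal property of $G_1 \ast_\phi G_2$ produces $\Psi$.

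In the other direction, the inclusions $N_i \hookrightarrow G_i \hookrightarrow G_1 \ast_\phi G_2$ combine via the universal property of the free product to give a homomorphism $\eta : N_1 \ast N_2 \to G_1 \ast_\phi G_2$, and $H_1 \hookrightarrow G_1 \hookrightarrow G_1 \ast_\phi G_2$ acts on $\eta(N_1 \ast N_2)$ by conjugation. Since $h_1$ and $\phi(h_1)$ are identified in the amalgam, this conjugation action matches the prescribed $H_1$-action on $N_1 \ast N_2$ used to form $\Gamma$, so we obtain $\Phi : \Gamma \to G_1 \ast_\phi G_2$. Finally I would verify $\Psi \circ \Phi = \id$ and $\Phi \circ \Psi = \id$ by checking on the generating sets $H_1 \cup N_1 \cup N_2$ and $G_1 \cup G_2$, which is immediate from the definitions.

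The one place that requires care, and is the main obstacle, is the compatibility of the two $H_1$-actions on $N_2$: the action used to define the semidirect product $\Gamma$ is transported through $\phi$, while inside $G_1 \ast_\phi G_2$ the conjugation by $h_1 \in H_1$ is literally the same element as conjugation by $\phi(h_1) \in H_2$. Once this identification is made explicit, both the homomorphism checks for $\Psi_2$ and the existence of $\Phi$ fall out directly, and the remainder of the argument is formal bookkeeping.
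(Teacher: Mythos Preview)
Your proof is correct and takes a genuinely different route from the paper. The paper argues by rewriting: starting from an alternating word $x_1 y_1 \cdots x_r y_r$ with $x_i \in G_1$, $y_i \in G_2$, it successively factors each letter as (normal part)$\cdot$(part in $H_1$ or $H_2$) and pushes the $H$-part to the right through the amalgamation, arriving at a form $(m_1 n_1 \cdots m_r n_r)\, h$ with $m_i \in N_1$, $n_i \in N_2$, $h \in H_1$. From this it reads off that the subgroup generated by $N_1$ and $N_2$ is normal with quotient $H_1$, and that the conjugation action is the stated one.

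Your approach instead builds the isomorphism directly from universal properties, constructing $\Psi$ and $\Phi$ and checking they are mutual inverses on generators. This is cleaner in one respect: the paper's argument, as written, establishes that the normal subgroup is \emph{generated} by $N_1 \cup N_2$ and is a \emph{quotient} of $N_1 \ast N_2$, but the injectivity of $N_1 \ast N_2 \to G_1 \ast_\phi G_2$ is left implicit (it follows from normal-form uniqueness in the amalgam, which is not spelled out). Your construction of $\Psi$ makes this injectivity automatic. On the other hand, the paper's rewriting procedure is exactly what is reused in the proof of Corollary~\ref{lm-amalgamated-is-res-solvable}, where one needs to track a specific nontrivial element through the decomposition; your more abstract argument would require reverting to something like the paper's normal form at that stage anyway.
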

\begin{proof}
An arbitrary element of $G_1 \ast_\phi G_2$ can be represented as a product
$$
w = x_1 y_1 x_2 y_2 \dots x_r y_r,
$$
where $x_1, \ldots, x_r \in G_1$ and $y_1, \ldots, y_r \in G_2$.
Express $x_1$ as $m_1 h_1$ where $m_1 \in G_1$ and $h_1 \in h_1$. Since we are working in the amalgamated product, we can move $h_1$ to  $G_2$ and  write
$$
w = m_1 \left(\phi(h_1) y_1\right) x_2 y_2 \dots x_r y_r.
$$
The element $\phi(h_1) y_1$ in $G_2$ can then be expressed as $n_1 g_1$ where $n_1 \in N_2$ and $g_1 \in H_2$.
Continuing this process,  moving elements from $H_1$ or $H_2$ to the right,  expresses $w$ as
\begin{equation} \label{prod expression}
w = m_1 n_1  m_2 n_2  \dots m_r n_r h,
\end{equation}
where $m_1, \ldots, m_r \in N_1 \subseteq G_1$,  $n_1, \ldots, n_r \in N_2 \subseteq G_2$, and $h \in H_1$.
The product $m_1 n_1  \dots m_r n_r$ can be considered as an element in $N_1 \ast N_2$.  Such elements form a normal subgroup in $G_1 \ast_\phi G_2$, with quotient $H_1$. All that remains to check is that the action of $H_1$ on the free product is the one described.
\end{proof}

\begin{corollary}
\label{lm-amalgamated-is-res-solvable}
Suppose $G_1$ and $G_2$ are residually solvable groups   satisfying the conditions of Lemma~\ref{lm-amalgamated-as-semidirect}.
Then the amalgamated product $G_1 \ast_\phi G_2$ is residually solvable.
\end{corollary}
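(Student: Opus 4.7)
My plan is to apply Lemma \ref{lm-amalgamated-as-semidirect} to rewrite $G := G_1 \ast_\phi G_2$ as $H_1 \ltimes (N_1 \ast N_2)$, and then to produce, for each non-trivial $g \in G$, an explicit solvable quotient of $G$ in which $g$ survives. Using the normal form from the proof of that lemma, $g$ involves only finitely many elements of $H_1$, $N_1$, and $N_2$. Residual solvability of $G_1$ and $G_2$ supplies a $k$ large enough that each of these elements is non-trivial modulo the characteristic subgroup $G_i^{(k)}$. The semi-direct decomposition $G_i = H_i \ltimes N_i$, together with the fact that the retraction $G_i \twoheadrightarrow H_i$ carries $G_i^{(k)}$ onto $H_i^{(k)}$, gives $H_i \cap G_i^{(k)} = H_i^{(k)}$; and since $\phi$ is an isomorphism it satisfies $\phi(H_1^{(k)}) = H_2^{(k)}$. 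Hence $\phi$ descends to an isomorphism $\bar\phi \colon H_1/H_1^{(k)} \to H_2/H_2^{(k)}$, and we obtain a surjection
\[
  G \twoheadrightarrow \bar G := (G_1/G_1^{(k)}) \ast_{\bar\phi} (G_2/G_2^{(k)}).
\]
A second application of Lemma \ref{lm-amalgamated-as-semidirect} identifies $\bar G \cong \bar H \ltimes (\bar N_1 \ast \bar N_2)$, with $\bar H := H_1/H_1^{(k)}$ and $\bar N_i := N_i/(N_i \cap G_i^{(k)})$ all solvable, and the image of $g$ in $\bar G$ is non-trivial by the choice of $k$.

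Next I would collapse the free product to a direct product via the natural surjection $\bar G \twoheadrightarrow \bar H \ltimes (\bar N_1 \times \bar N_2)$. Its image embeds into $\bar G_1 \times \bar G_2$ as the fibre product over $\bar H$, and is therefore solvable, being a subgroup of a product of two solvable groups. Its kernel $K$ is the Cartesian subgroup of $\bar N_1 \ast \bar N_2$; Bass--Serre theory, applied to the action of $\bar N_1 \ast \bar N_2$ on its Bass--Serre tree (with trivial edge stabilisers and with vertex stabilisers conjugate to $\bar N_1$ or $\bar N_2$), together with $K \cap \bar N_i = \{1\}$ and normality of $K$, forces $K$ to act freely on the tree. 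Hence $K$ is a free group, and in particular residually nilpotent.

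To conclude, each term $\gamma_n(K)$ of the lower central series is characteristic in $K$ and so normal in $\bar G$. If the image of $g$ in $\bar G/K$ is non-trivial, then $\bar G/K$ is already the desired solvable quotient. Otherwise $g$ maps into $K$, and residual nilpotence of $K$ gives an $n$ with the image of $g$ outside $\gamma_n(K)$. Then $\bar G/\gamma_n(K)$ is an extension of the nilpotent group $K/\gamma_n(K)$ by the solvable group $\bar G/K$, hence solvable (with derived length at most the sum of the two), and it contains the image of $g$ non-trivially. The composition $G \twoheadrightarrow \bar G \twoheadrightarrow \bar G/\gamma_n(K)$ is therefore the required solvable quotient.

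The main obstacle I anticipate is twofold: first, verifying that $\bar H \ltimes (\bar N_1 \times \bar N_2)$ is indeed realised as the fibre product $\bar G_1 \times_{\bar H} \bar G_2$ (so that one genuinely gets a subgroup of a solvable group); and second, the Bass--Serre identification of the Cartesian kernel $K$ as a free group. Both are standard, but they are precisely what reduce the problem to the well-known residual nilpotence of free groups, making the final extension argument work.
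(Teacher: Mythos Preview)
Your proof is correct and follows the same overall architecture as the paper's: reduce to $\bar G = (G_1/G_1^{(k)}) \ast_{\bar\phi} (G_2/G_2^{(k)}) \cong \bar H \ltimes (\bar N_1 \ast \bar N_2)$ with all factors solvable, then find a solvable quotient of $\bar G$ in which the given element survives. The difference lies in that last step. The paper simply invokes Gruenberg's theorem that a free product of solvable groups is residually solvable, and quotients $\bar N_1 \ast \bar N_2$ by a suitable term of its own derived series; since that term is characteristic, the $\bar H$-action descends. You instead pass through the direct product $\bar N_1 \times \bar N_2$, identify the Cartesian kernel $K$ as free via Bass--Serre theory, and quotient by a term of $\gamma_n(K)$. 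Your route is more self-contained (it essentially reproves Gruenberg's result in this special case) and handles the $h\neq 1$ and $h=1$ cases uniformly, whereas the paper treats them separately; the paper's route is shorter because it outsources the work to a citation.

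Two minor remarks. First, the fibre-product embedding into $\bar G_1 \times \bar G_2$ is correct but unnecessary: $\bar H \ltimes (\bar N_1 \times \bar N_2)$ is already visibly solvable as an extension of one solvable group by another. Second, the normality of $\gamma_n(K)$ in $\bar G$ (which you use) comes not from $K$ being characteristic in $\bar N_1 \ast \bar N_2$ but simply from $K$ being normal in $\bar G$ (as the kernel of $\bar G \twoheadrightarrow \bar H \ltimes (\bar N_1 \times \bar N_2)$), so that conjugation by $\bar G$ induces automorphisms of $K$ preserving the characteristic subgroup $\gamma_n(K)$; you state this correctly, I just note it since it is the one place the argument could slip.
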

\begin{proof}
Free products of residually solvable groups are residually solvable, but semidirect products of residually solvable groups can fail to be  residually solvable.  Nevertheless we will see that   the semidirect products of Lemma~\ref{lm-amalgamated-as-semidirect} are residually solvable.

Let $w = m_1 n_1  m_2 n_2  \dots m_r n_r h$ be a non-trivial element in $G_1 \ast_\phi G_2$ as per~\eqref{prod expression},
where all $m_i \in G_1$ and $n_i\in G_2$ are non-identity, with the possible exceptions of $m_1$  and $n_r$, with $h \in H_1$. If $h \not = 1$, then there is a solvable quotient $H_1 \to \bar H$  of $H_1$ where
$h$ survives (since subgroups of residually solvable groups are residually solvable), which leads to a quotient $G_1 \ast_\phi G_2$, where $w$ has a nontrivial image. Therefore it suffices to consider the case $h=1$.

Take $k$ such that for $i=1,2$,  $G_i \to \bar G_i := G_i/ G_i^{(k)}$ are quotients of $G_i$ by some derived subgroup such that all the $m_i$ and $n_i$ have nontrivial images in $\bar G_1$ and $\bar G_2$.
Let $\bar{H}_1$, $\bar N_1$ and $\bar N_2$ denote the (necessarily solvable) images  of $H_1$, $N_1$ and $N_2$, respectively, in $\bar G_1$ and $\bar G_2$.
We can view $w$ as element in the free product $\bar G_A \ast \bar G_B$. Therefore,  by the argument that free products of solvable groups are residually solvable (see e.g.~\cite{g}), there exists a quotient
$\overline{\bar N_1 \ast \bar N_2}$ of $\bar N_1 \ast \bar N_2$ by one of its derived subgroups where $w$ is non-trivial. Since this quotient is characteristic, it has a natural action of $\bar H_1$ which extends the actions of $\bar H_1$ on $\bar N_1$ and on $\bar N_2$.  This allows us to  map
$$
G_1 \ast_\phi G_2 = H_1 \ltimes (N_1 \ast N_2) \to \bar H_1 \ltimes ( \bar N_1 \ast \bar N_2 ) \to \bar H_1 \ltimes \overline{\bar N_1 \ast \bar N_2},
$$
where $\bar H_1 \ltimes \overline{\bar N_1 \ast \bar N_2}$ is  a solvable quotient of $G \ast_\phi G$ in which $w$ has a nontrivial image.
\end{proof}
\begin{lemma}
\label{lm-amalgamated-mapsto}
Suppose $G$ is a  group satisfying the conditions in Theorem~\ref{ressolvable}.
Then the amalgamated product $G \ast_\phi G$ can be written as a semidirect product $(A \ast B) \ltimes H$ for some normal subgroup $H$, and therefore
there is a projection   $G \ast_\phi G \to A \ast  B$.
\end{lemma}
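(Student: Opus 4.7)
My plan is to produce a split surjection $p : G \ast_\phi G \to A \ast B$; the semidirect product decomposition then follows automatically, with $H = \ker p$ as the normal complement of (the image of) $A \ast B$.

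By the hypotheses of Theorem~\ref{ressolvable}, composing $\pi$ with projection to $A$ gives a homomorphism $\rho_A : G \to A$ that is the identity on $A$ and trivial on $B$; symmetrically, composing with projection to $B$ gives $\rho_B : G \to B$, the identity on $B$ and trivial on $A$. Label the two copies of $G$ in the amalgamated product as $G_1, G_2$ with amalgamated subgroups $B_1 \subseteq G_1$ and $A_2 \subseteq G_2$ identified via $\phi$. Define $\rho_1 : G_1 \to A \ast B$ by composing $\rho_A$ with the inclusion $A \hookrightarrow A \ast B$, and $\rho_2 : G_2 \to A \ast B$ by composing $\rho_B$ with $B \hookrightarrow A \ast B$. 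To invoke the universal property of the amalgamated product I need $\rho_1(b) = \rho_2(\phi(b))$ for every $b \in B_1$; but $\rho_A$ kills $B$ and $\rho_B$ kills $A$, so both sides are trivial, and the two maps combine to give $p : G \ast_\phi G \to A \ast B$.

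For the splitting, define $s : A \ast B \to G \ast_\phi G$ by sending $A$ into $A_1 \subseteq G_1$ and $B$ into $B_2 \subseteq G_2$, using the universal property of the free product $A \ast B$. On generators of $A \ast B$, $p \circ s$ acts as the identity, since $\rho_A|_A = \id_A$ and $\rho_B|_B = \id_B$; hence $p \circ s = \id_{A \ast B}$. This splits the short exact sequence $1 \to \ker p \to G \ast_\phi G \to A \ast B \to 1$ and yields $G \ast_\phi G \cong (A \ast B) \ltimes H$ with $H = \ker p$, as required.

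No step presents a serious obstacle; the real content is recognising that the hypothesis on $\pi$ is precisely what is needed so that (i) the pair of one-sided projections $\rho_A, \rho_B$ agree (trivially) on the amalgamated subgroup, and (ii) their composites with inclusions into $A \ast B$ restrict to the identity on the distinguished copies $A_1 \subseteq G_1$ and $B_2 \subseteq G_2$. Without such a $\pi$---for example when $G = \BS(1,m)$ with $m \neq 1$, as in Examples~\ref{example groups}(2)---no such projection $G \ast_\phi G \to A \ast B$ need exist.
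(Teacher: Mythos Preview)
Your argument is correct and, if anything, cleaner than the paper's. Both proofs construct the same retraction $p : G \ast_\phi G \to A \ast B$ (sending $G_1$ to $A$ via $\pi_A$ and $G_2$ to $B$ via $\pi_B$), but they verify well-definedness differently. The paper first invokes Lemma~\ref{lm-amalgamated-as-semidirect} to write $G \ast_\phi G \cong B \ltimes (G_B \ast G_A)$, then builds the map by annihilating the $B$-factor and applying the $\pi$-induced maps $G_B \to A$ and $G_A \to B$ on the free product part; the splitting is then read off from the inclusions $A \leq G_B$ and $B \leq G_A$. You instead appeal directly to the universal property of the amalgamated product, observing that $\pi_A$ and $\pi_B$ agree (trivially) on the glued subgroup, and obtain the section from the universal property of $A \ast B$. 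Your route is more self-contained---it does not need the prior lemma---while the paper's route keeps the argument embedded in the semidirect-product framework it has already set up and makes the normal complement $H$ visible inside $G_B \ast G_A$.
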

\begin{proof}
Annihilating the first factor in $B \ltimes (G_A \ast G_B)$ and then using the maps $G_A \to B$ and $G_B \to A$ induced by $\pi$, maps $G \ast_\phi G$ to  $A \ast B$.
This map is clearly surjective with some kernel $H$ and restricts to the identity on $A \ast B$ (viewed as a  subgroup of $G_A \ast G_B$  via   $B \leq G_A$ and $A \leq G_B$), so splits $G \ast_\phi G$ into a semidirect product.
\end{proof}

\begin{proof}[Proof of Theorem~\ref{ressolvable}]
Applying   Corollary~\ref{lm-amalgamated-is-res-solvable} and Lemma~\ref{lm-amalgamated-mapsto} repeatedly, we find that if $k \geq 4$, then the groups
$J := G_1 \ast_{\phi_1} G_2 \ast_{\phi_2} \cdots \ast_{\phi_{k-3}} G_{k-2}$  and   $K := G_{k-1} \ast_{\phi_{k-1}} G_k$ (in the notation of Section~\ref{Generalizing section}) are both residually solvable,  and both contain $A \ast B$ in such a way that they both
split over this group as semidirect products, and   $\overline{\H}_k(G,\phi)  =  J \ast_{A \ast B} K$.
So the hypotheses of Lemma~\ref{lm-amalgamated-as-semidirect} are met and $\overline{\H}_k(G,\phi)$  is residually finite by a final application of Corollary~\ref{lm-amalgamated-is-res-solvable}.

Finally, $\H_k(G, \phi) = \overline{\H}_k(G, \phi) \rtimes C_k$, so is also residually solvable. (Semidirect products $H \rtimes A$  of  residually solvable groups $H$ and solvable groups $A$ are residually solvable.)
\end{proof}

Theorem~\ref{ressolvable} may also hold  when `residually solvable' is replaced with `residually nilpotent' or `residually finite'; however, our proof would need further ideas and the given theorem suffices for our application:

\begin{corollary} \label{so sofic cor}
When  $G$ is $\Z^2$, $\mathcal{H}$,  $\Z \wr \Z$, or $\Met$ as per Examples~\ref{example groups},  $\overline{\H}_k(G)$ is residually solvable, and so sofic, for all $k \geq 4$.
\end{corollary}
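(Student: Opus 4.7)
The plan is to reduce the corollary directly to Theorem~\ref{ressolvable}, since the four listed groups are precisely those for which Examples~\ref{example groups} constructs the required map $\pi : G \to A \times B$. Concretely, I would verify two hypotheses of Theorem~\ref{ressolvable} for each $G \in \{\Z^2, \Heis, \Z \wr \Z, \Met\}$: that $G$ is residually solvable, and that the map $\pi$ from Examples~\ref{example groups} satisfies $\pi(a) = (a,1)$ and $\pi(b) = (1,b)$.

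For residual solvability, all four groups are in fact solvable (indeed metabelian), so residual solvability is automatic: $\Z^2$ is abelian; $\Heis$ is nilpotent of class two with center $\langle [a,b]\rangle$ and abelian quotient $\Z^2$; $\Z \wr \Z$ fits into $1 \to \bigoplus_{\Z} \Z \to \Z \wr \Z \to \Z \to 1$ with abelian kernel; and $\Met$ is metabelian by definition, having abelian commutator subgroup. For the existence of $\pi$, I would simply refer back to items (1), (3), (4), (5) of Examples~\ref{example groups}: $\pi = \id$ works for $\Z^2$, and for each of $\Heis$, $\Z \wr \Z$, and $\Met$ the abelianization map $G \to G^{\mathrm{ab}} \cong \Z^2 \cong \langle a\rangle \times \langle b\rangle$ does the job, as one sees by tracking the images of the generators. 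With both hypotheses in place, Theorem~\ref{ressolvable} yields that $\overline{\H}_k(G)$ is residually solvable for all $k \geq 4$.

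To conclude soficity, I would invoke the remark at the end of Section~\ref{soficity section}: residually solvable groups are residually amenable, and residually amenable groups are sofic (since solvable implies amenable implies sofic, and soficity passes to residual extensions). This immediately upgrades the residual solvability of $\overline{\H}_k(G)$ to soficity.

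There is no real obstacle to overcome; the work has been front-loaded into Theorem~\ref{ressolvable} and Examples~\ref{example groups}, so the corollary amounts to assembling the pieces. The only points worth mentioning carefully are that the maps $\pi$ of Examples~\ref{example groups} really do land in $A \times B$ (not some larger abelian group), which is immediate because $A$ and $B$ are each infinite cyclic and the abelianizations are $\Z^2$, and that the statement concerns $\overline{\H}_k(G)$ rather than $\H_k(G,\phi)$, but Theorem~\ref{ressolvable} establishes residual solvability of both.
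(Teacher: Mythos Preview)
Your proposal is correct and matches the paper's approach: the corollary has no separate proof in the paper, being an immediate consequence of Theorem~\ref{ressolvable} applied to the groups of Examples~\ref{example groups}, together with the observation (from Section~\ref{soficity section}) that residually solvable groups are sofic. Your explicit verification that each of the four groups is solvable (hence residually solvable) and that the maps $\pi$ from Examples~\ref{example groups} satisfy the hypotheses is more detail than the paper itself supplies, but it is exactly the intended argument.
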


Finally, we remark on an alternative route:

\begin{proposition}
\label{soficquotient}
Suppose there exists a homomorphism $\pi : G \to A \times B$ as per Theorem~\ref{ressolvable}.
Then for all $k \geq 3$, there are  homomorphisms $\mu: \H_k(G, \phi) \to C_k \ltimes G^k$ and $\bar \mu: \bar \H_k(G, \phi) \to G^k$.
Moreover, the restrictions  of $\mu$ and $\bar{\mu}$ to any copy $G_i$ of $G$ inside $\H_k(G)$ are injective.
\end{proposition}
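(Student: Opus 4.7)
The approach is to build $\bar\mu$ by specifying its restriction to the copy $G_1 = G$ as a coordinate-wise map $G \to G^k$, propagate to the other $G_i$ via cyclic shifts that model the $t$-action, and finally extend to $\mu$ by sending $t$ to a generator of $C_k$. Write $\pi_A := p_A \circ \pi$ and $\pi_B := p_B \circ \pi$ where $p_A,p_B$ are the projections $A \times B \to A,B$; by hypothesis $\pi_A|_A = \id_A$, $\pi_A|_B = 1$, $\pi_B|_B = \id_B$, and $\pi_B|_A = 1$.

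Define the homomorphism $\bar\mu_1 = (f_1, \ldots, f_k) : G \to G^k$ by
\[
f_1 = \id_G, \quad f_2 = \phi \circ \pi_B, \quad f_k = \phi^{-1} \circ \pi_A, \quad f_j = 1 \text{ for } 3 \leq j \leq k-1
\]
(when $k=3$ the last range is empty and $f_2,f_3$ are the two nontrivial maps). Let $\sigma : G^k \to G^k$ be the cyclic right shift $(g_1, \ldots, g_k) \mapsto (g_k, g_1, \ldots, g_{k-1})$, and on each $G_i$ set $\bar\mu_i := \sigma^{i-1} \circ \bar\mu_1$ after identifying $G_i$ with $G$. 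To see that the $\bar\mu_i$ glue to a well-defined homomorphism $\bar\mu : \bar\H_k(G,\phi) \to G^k$, I would verify the amalgamation relations $\bar\mu_i(b) = \bar\mu_{i+1}(\phi(b))$ for $b \in B$, which by the shift structure reduces to the single identity $\bar\mu_1(b) = \sigma(\bar\mu_1(\phi(b)))$. This is a coordinate-by-coordinate check using the four identities for $\pi_A, \pi_B$: the first coordinate reads $b = \phi^{-1}(\pi_A(\phi(b))) = b$, the second reads $\phi(b) = f_1(\phi(b))$, and every other coordinate reads $1=1$. The hypothesis $k \geq 3$ is used precisely here, since $f_2$ and $f_k$ must be distinct slots to carry the two nontrivial data $\phi \circ \pi_B$ and $\phi^{-1} \circ \pi_A$.

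To extend to $\mu$, I use $\H_k(G,\phi) = \bar\H_k(G,\phi) \rtimes C_k$ with $t$ conjugating $G_i$ onto $G_{i+1}$; send $t$ to the generator of $C_k$ in $C_k \ltimes G^k$ whose conjugation action on $G^k$ is exactly $\sigma$. This is consistent by design because the relation $\bar\mu_{i+1}(g) = \sigma(\bar\mu_i(g))$ is built into the construction, so the universal property of the semidirect product delivers $\mu$. Injectivity of $\mu|_{G_i} = \bar\mu|_{G_i} = \sigma^{i-1} \circ \bar\mu_1$ is automatic, since $f_1 = \id_G$ makes $\bar\mu_1$ injective and $\sigma$ is a bijection of $G^k$. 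The main obstacle is simply book-keeping around conventions: one must fix the direction of the shift $\sigma$ in tandem with the direction in which $t$ conjugates between the $G_i$ so that the amalgamation identities come out right. Once a consistent convention is in place, the verification is routine.
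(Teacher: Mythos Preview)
Your proof is correct and is essentially the same construction as the paper's: the paper defines $\bar\mu(\iota_l(g)) = (1,\dots,1,\phi^{-1}(\pi_A(g)),g,\phi(\pi_B(g)),1,\dots,1)$ with the nontrivial entries in slots $l-1,l,l+1$, which is exactly your $\sigma^{l-1}\circ\bar\mu_1$, and the verification of the amalgamation relations is identical. Your write-up is in fact slightly more complete, since you spell out the extension to $\mu$ via the semidirect product and make explicit where $k\geq 3$ is used, whereas the paper leaves these implicit.
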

\begin{proof}
Let $\pi_A : G \to A$ (respectively, $\pi_B : G \to B$) be the composition  of $\pi$ with projection onto $A$ (respectively, $B$).
Define the homomorphism $\bar \mu: \bar \H_k(G, \phi) \to G^k$, given by
$$
\overline{\mu}(\iota_l(g)) = (1,\dots,1, \phi^{-1}(\pi_A(g)), g, \phi(\pi_B(g)),1,\dots,1),
$$
where $g$ is an arbitrary element of $G$, and $\iota_l(g)$ is the element in $\overline \H_k(G)$ corresponding to $g$ sitting in the $l$-th copy of $G$. The elements
$\phi(\pi_B(g))$, $g$, and $\phi^{-1}(\pi_A(g))$ are sitting in coordinates $l-1$, $l$ and $l+1$.
Clearly $\overline{\mu}$ is well defined on each copy $G_i$ appearing in the presentation of $\overline\H_k(G)$, so it suffices to verify that
$\overline{\mu}$ identifies the $l$-th copy of $B$ with the $l+1$-st copy of $A$. By definition we have
$$
\overline{\mu}(\iota_l(b)) = (1,\dots,1, \phi^{-1}(\pi_A(b)), b, \phi(\pi_B(b)),1,\dots,1) =
(1,\dots,1, 1, b, \phi(b),1,\dots,1)
$$
$$
\overline{\mu}(\iota_{l+1}(a)) = (1,\dots,1, \phi^{-1}(\pi_A(a)), a, \phi(\pi_B(a)),1,\dots,1) =
(1,\dots,1, \phi^{-1}(a) , a, 1 ,1,\dots,1),
$$
and thus
$ \overline{\mu}(\iota_l(b)) = \overline{\mu}(\iota_{l+1}(\phi (b)))$, i.e., $\overline{\mu}$ extends to the group $\H_k(G)$.
By construction, the restriction of $\overline{\mu}$ on each copy of $G$ is injective. (Unless we are in a degenerate case, the maps $\mu$ and $\overline{\mu}$
are not surjective.)\end{proof}

This is weaker than Theorem~\ref{ressolvable}
  in that it  does not tell us that $\H_k(G, \phi)$  is sofic or   residually solvable. But this proposition would suffice for our applications in Section~\ref{examples} because it tells is that when $G$ is sofic,  $\H_k(G, \phi)$ has a sofic quotient into which $G$ injects  (condition~(1) of Theorem~\ref{main-vague}).  Moreover, it  does so in a manner that makes   sofic approximations of that quotient easy to construct explicitly from sofic approximations of $G$.

\begin{remark}
If we remove the defining relator $t^k=1$ from our presentation for $\H_{k}(G,\phi)$, then $t$ becomes the stable letter of the HNN-extension   $\langle G,t \mid b^t  = \phi(b) \  \forall b \in B \rangle$, which is more straightforward to understand in the context of soficity.  For example, the instance where $G = \langle a ,b \mid  b^a = b^2 \rangle$ and $\phi: b \mapsto a$ is Baumslag's   one-relator group $\langle b,  t \mid  b^{b^t}  = b^2  \rangle$.
If $G$ is solvable, then $\langle G,t \mid b^t  = \phi(b) \  \forall b \in B \rangle$ is sofic:   Collins and Dykema~\cite[Corollary~3.6]{co-dy} show that an HNN-extension of a sofic group $G$ relative to an injective group homomorphism $\theta: H \to G$, for $H \le G$ \emph{monotileably amenable}, is sofic. If  $G$ is solvable, then so is its subgroup $B$.  Solvable groups are monotileably amenable, thereby implying  $\langle G,t \mid b^t  = \phi(b) \  \forall b \in B \rangle$ is sofic.
\end{remark}

\section{Sofic quotients and almost conjugation}
\label{generalcase proof}

This section is devoted to proving Theorem~\ref{main-vague} relating soficity in the context of $\H_k(G,\phi)$ to  seemingly pathological permutations $f$.  These $f$ come from permutations approximating   $t\in \H_k(G,\phi)$.  They will have order dividing $k$ since $t^k=1$ and, for all $b \in B$, will `almost conjugate' permutations approximating $b$ to permutations approximating $\phi(b)$ since $b^t = \phi(b)$ in $\H_k(G,\phi)$.  When $G$ is amenable and we have   explicit sofic approximations for $G$,  the permutations approximating $b$ and $\phi(b)$ in  $\H_k(G,\phi)$  essentially have to be those  sofic approximations.  In  examples, the `almost conjugate' conclusion  will then amount to   a local recurrence such as $f(x+1) = m f(x)$ holding for most values of $x$.

We make Theorem~\ref{main-vague} precise as:

\begin{theorem}
\label{main-precise}
Suppose $G$ is a group,   $\phi$ is an isomorphism $B \to A$ between subgroups $A,B \leq G$, and $k \geq 1$ is an integer.  The following two conditions are equivalent.
\begin{enumerate}
\item
\label{main-precise1}
$\H_k(G,\phi)$ has a sofic quotient $Q$ such that the  composition $G  \stackrel{\iota}{\to} \H_k(G,\phi) \to Q$ is injective.

\item
\label{main-precise2}
For all finite subsets $S \subseteq G$ and all $\d,\varepsilon >0$, there exists   an $(S,\d,n)$-approximation $\psi$  of $G$ and a permutation $f \in \Sym(n)$ of order dividing $k$ such that for all $b \in S  \cap \phi^{-1}(A \cap S)$,
$$
d( \psi(b) \circ f, f \circ \psi(\phi(b)))  \ < \    \varepsilon.
$$
\end{enumerate}
If  $G$ is amenable, then these are also equivalent to:

\begin{enumerate}
\addtocounter{enumi}{2}

\item  \label{main-precise3}
For all finite sets $S \subseteq G$ and all $\varepsilon >0$, there exist
a finite set $S' \subseteq G$ with $S \subseteq S'$ and  $\d >0$   and an integer $N$ such that if $\psi$ is an $(S',\d,n)$-sofic approximation of $G$ with $n> N$,
then there exists a permutation $f \in \Sym(n)$ of order dividing $k$
such that  for all $b \in S \cap \phi^{-1}(A \cap S)$,
$$
d( \psi(b) \circ f, f \circ \psi(\phi(b)))  \ < \    \varepsilon.
$$
\end{enumerate}

\end{theorem}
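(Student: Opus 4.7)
The plan is to establish the three implications $(\ref{main-precise1})\thus(\ref{main-precise2})$, $(\ref{main-precise2})\thus(\ref{main-precise1})$, and then (under amenability) $(\ref{main-precise2})\Leftrightarrow(\ref{main-precise3})$.  The first two implications closely parallel the ultraproduct-versus-finitary equivalence in Proposition~\ref{equiv}, while the passage to $(\ref{main-precise3})$ rests on the rigidity theorem attributed in the introduction to Helfgott--Juschenko, with antecedents in Elek--Szabo and Kerr--Li.

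For $(\ref{main-precise1})\thus(\ref{main-precise2})$, let $\pi : \H_k(G,\phi) \to Q$ be the quotient, with $\pi|_G$ injective.  Given $S$, $\d$, and $\ep$, I would choose a finite $S' \ins Q$ containing $\pi(S) \cup \pi(\phi(B \cap S)) \cup \set{\pi(t)}$ together with all products $\pi(b)\pi(t)$ and $\pi(t)\pi(\phi(b))$ for $b \in B \cap S$, as well as the powers $\pi(t)^j$ for $1 \leq j \leq k$, and pick $\d' \ll \min(\d,\ep)/k$.  A sofic approximation $\Psi : Q \to \Sym(n)$ of accuracy $(S',\d',n)$ then gives an $(S,\d,n)$-approximation $\psi := \Psi \circ \pi|_G$ of $G$ because $\pi|_G$ is injective and $\pi(S) \ins S'$.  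Since $t^k=1$ in $Q$, Lemma~\ref{hamming lemma}(i) yields $d(\Psi(\pi(t))^k, \id) = O(k\d')$; the fixed-point set of $\Psi(\pi(t))^k$ is invariant under $\Psi(\pi(t))$, so I would truncate $\Psi(\pi(t))$ to that set and extend by the identity on its complement, producing an $f \in \Sym(n)$ of order dividing $k$ with $d(f,\Psi(\pi(t))) = O(k\d')$.  The almost-conjugation bound follows because $\pi(b)\pi(t) = \pi(t)\pi(\phi(b))$ in $Q$ and $\Psi$ is an approximate homomorphism on $S'$.

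For $(\ref{main-precise2})\thus(\ref{main-precise1})$, I would carry out the standard ultraproduct construction: index by a cofinal family $(S_i,\d_i,\ep_i)$ with $S_i \uparrow G$ and $\d_i,\ep_i \downarrow 0$, apply $(\ref{main-precise2})$ to obtain $\psi_i : G \to \Sym(n_i)$ and $f_i \in \Sym(n_i)$ of order dividing $k$, fix an ultrafilter $\mathcal U$ refining the cofinal filter, and define a homomorphism $\Psi : \H_k(G,\phi) \to \prod_{\mathcal U} \Sym(n_i)$ by $g \mapsto [\psi_i(g)]_{\mathcal U}$ for $g \in G$ and $t \mapsto [f_i]_{\mathcal U}$.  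The three families of defining relations pass to the ultraproduct: relations of $G$ from the approximate-homomorphism clause on the $\psi_i$, the relation $t^k=1$ from the exact order of $f_i$, and $b^t=\phi(b)$ from the vanishing almost-conjugation error.  Then $Q := \Psi(\H_k(G,\phi))$ is sofic as a subgroup of a metric ultraproduct of finite symmetric groups, and the lower-bound clause in the $\psi_i$ forces $\Psi|_G$ to be injective.

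The main obstacle is $(\ref{main-precise2})\thus(\ref{main-precise3})$ in the amenable case; the reverse direction is immediate since amenable groups are sofic, so the required $(S',\d,n)$-approximations exist in abundance.  To upgrade $(\ref{main-precise2})$, I would invoke the quantitative theorem that for amenable $G$ any two sufficiently fine sofic approximations become conjugate up to small error after padding to a common dimension by F{\o}lner-based approximations.  Given an arbitrary $(S',\d,n)$-approximation $\psi'$ from $(\ref{main-precise3})$, this rigidity statement produces a $\sigma \in \Sym(n)$ with $\sigma^{-1}\psi'(g)\sigma$ close to $\psi(g)$ (the specific approximation from $(\ref{main-precise2})$, suitably padded) for every $g \in S \cup \phi(B \cap S)$.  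Then $f' := \sigma f \sigma^{-1}$ has order dividing $k$, and Lemma~\ref{hamming lemma}(ii)--(iii), combined with a short chain of triangle inequalities, transports the almost-conjugation bound from $(\psi,f)$ to $(\psi',f')$.  The delicate step is choosing $S'$, $\d$, and $N$ in terms of $S$ and $\ep$ so that the rigidity estimate guarantees a small enough conjugation error on the needed elements; this is where amenability and the F{\o}lner machinery do the real work.
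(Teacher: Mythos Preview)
Your proposal is correct and follows the paper's overall architecture: the implications $(\ref{main-precise1})\thus(\ref{main-precise2})$ and $(\ref{main-precise3})\thus(\ref{main-precise2})$ are handled exactly as in the paper, and your treatment of $(\ref{main-precise2})\thus(\ref{main-precise3})$ via the Helfgott--Juschenko rigidity theorem (with a padding step to equalize dimensions, which the paper isolates as Lemma~\ref{embed}) is the same idea.

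The one genuine difference is in $(\ref{main-precise2})\thus(\ref{main-precise1})$.  You pass to the ultraproduct first, observe that $g \mapsto [\psi_i(g)]_{\mathcal U}$ is already an honest homomorphism $G \to \prod_{\mathcal U}\Sym(n_i)$ and that $[f_i]_{\mathcal U}$ has order dividing $k$ and conjugates correctly, and then invoke the universal property of the presentation $\H_k(G,\phi) = \langle G, t \mid t^k, \ b^t=\phi(b)\rangle$.  The paper instead works at the finite level: it fixes a section $\sigma:\H_k(G,\phi)\to\{G,t\}^{\ast}$, defines for each index $i$ a map $\psi_i^{f_i}:\H_k(G,\phi)\to\Sym(n_i)$ by evaluating words, and proves a quantitative lemma (Lemma~\ref{to almost conjugating}) showing that $\psi_i^{f_i}$ is an approximate homomorphism on any prescribed finite subset of $\H_k(G,\phi)$.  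This forces the paper's index set to carry an extra coordinate (a finite $S\subseteq\H_k(G,\phi)$, not just $S\subseteq G$).  Your route is shorter and avoids the section and the combinatorics of Lemma~\ref{to almost conjugating}; the paper's route has the advantage of producing, for each index, an explicit finite-level near-homomorphism of $\H_k(G,\phi)$ itself, which is closer in spirit to the finitary formulation of soficity and makes the quantitative content more visible.
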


\begin{proof}[Proof of Theorem~\ref{main-precise},  (\ref{main-precise1})~$\thus$~(\ref{main-precise2})]
We have that there is a sofic quotient $Q$ such that the composition of the natural map $G \stackrel{\iota}{\to} \H_k(G,\phi)$ with the quotient map $\pi: \H_k(G,\phi) \to Q$ is injective.  In particular, the map $\iota$ is injective.

Suppose $S\subseteq G$ is a finite subset and  $\ep , \d > 0$. We seek an $n$ and an $(S, \d, n)$-approximation $\psi$ of $G$ together with a permutation $f \in \Sym(n)$ of order dividing $k$ such that $d( \psi(\phi(b)) \circ f, f \circ \psi(b)) <    \varepsilon$ for all $b \in S  \cap \phi^{-1}(A \cap S)$ .

Let
$$
S'  \ = \ \set{\id, t, \ldots, t^{k-1}} \cup  \iota(S) \cup    \iota\left(S  \cap \phi^{-1}(A \cap S) \right) t     \  \subseteq \  \H_k(G, \phi).
$$

  Let $\d'=\min\{\d,\varepsilon\}/ 6k$.
Then $\pi(S')$ is a finite subset of the sofic group $Q$,   so there exists  an $n \in \N$ and an
$(\pi(S'), \d' ,n)$-approximation $Q \to \Sym(m)$.  Via $\pi$ this gives a map $\psi': \H_k(G,\phi) \to \Sym(n)$ which enjoys the first defining property of an $(S', \d' ,n)$-approximation, but  may fail the second as it could map some elements of $S'$ to the identity.
Since $G$ naturally maps into $\H_k(G,\phi)$ and $\d' < \d$, the composition $\psi$ of $\iota$ and $\psi'$ is an $(S,\d,n)$-approximation of $G$, as required.

We will obtain the requisite  permutation $f \in \Sym(n)$ from  the action of $t$ under $\psi'$. First set
$\tilde f =  \psi'(t).$
The order of this permutation may fail to divide $k$ since $\psi'$ is not necessarily a homomorphism.  However,
$$
d(\tilde f^k , \id)  \ = \  d( \psi'(t)^k, \id) \ \leq \    d( \psi'(t)^k,  \psi'(t^k))  +  d( \psi'(t^k), \id)  \ < \    (k-1) \d'  +  \d'   \ = \  k \d',
$$
where the second inequality holds because $t^k =\id$ and $t^i \in S'$ for all $i$. Therefore  the set of points which are not part of a cycle of length dividing $k$ under the action of $\tilde f$ is correspondingly small and we can find a permutation $f$ of order dividing $k$ such that $d(f, \tilde f) < k \d' $.

Suppose $b \in S \cap \phi^{-1}(A \cap S)$.
It remains to show that
$$
d\left(\psi(b) \circ f, f \circ \psi(\phi(b))\right) \ \leq \  \varepsilon.
$$
As  $d(f, \tilde f) \leq k \delta'$,   Lemma~\ref{hamming lemma} (iii)  yields
\begin{equation} \label{first}
d\left(
f^{-1} \circ \psi(b) \circ f,
\tilde f^{-1} \circ \psi(b) \circ \tilde f
\right)  \ <  \  2k \d'.
\end{equation}
By definition of $\tilde{f}$,
\begin{equation}
\tilde f^{-1} \circ \psi(b) \circ \tilde f  \ = \   \psi'(t)^{-1} \circ  \psi'(\iota(b)) \circ  \psi'(t).
\end{equation}
Now,  as $t,  t^{-1}, \id \in S'$ and  $\psi'$ is an $(S', \d' ,n)$-approximation,
\begin{equation}
d\left( \psi'(t)^{-1}  \circ \psi'(\iota(b)) \circ   \psi'(t),  \psi'(t^{-1})  \circ \psi'(\iota(b)) \circ   \psi'(t)  \right)
\ = \ d\left( \psi'(t)^{-1},  \psi'(t^{-1})    \right)  \ \leq \  2\d'.
\end{equation}

And,  likewise, as  $t^{-1}, b, t, bt, t^{-1} bt \in S'$  and $\phi(\iota(b))  =    t^{-1}\iota(b) t$ in $\H_k(G,\phi)$,
\begin{equation} \label{fourth}
d (\psi'(t^{-1}) \circ \psi' (\iota(b))  \circ   \psi'(t), \psi'(\iota(\phi(b))) )  \ = \
d\left( \psi'(t^{-1})  \circ \psi'(\iota(b)) \circ   \psi'(t), \psi'(t^{-1} \iota(b)    t )
\right) \ \leq \  2\d'.
\end{equation}
In combination, \eqref{first}--\eqref{fourth} yield the first inequality of:
$$
d\left(\psi(b) \circ f, f \circ \psi(\phi(b))\right)  \ = \
d\left(
f^{-1} \circ \psi(b) \circ  f,
 \psi(\phi(b))
\right) \ \leq \  (2k+ 4) \d'  \ \leq \  6k \d'  \ \leq \  \varepsilon.
$$
\end{proof}

The following lemma will provide the heart of our proof that (\ref{main-precise2})~$\thus$~(\ref{main-precise1}).
Since $\H_k(G,\phi)$ is generated by $\iota(G)$ and $t$, we can choose a section  $\sigma: \H_k(G,\phi) \to \set{ G, t }^{\ast}$ for the evaluation map $\set{ G, t }^{\ast} \to \H_k(G,\phi)$---that is, for every  $g \in \H_k(G,\phi)$ we choose  a way of expressing $g$ as a product $\sigma(g) = \iota(g_1) t^{j_1} \cdots \iota(g_r) t^{j_r}$ of elements of $G$ and powers of $t$.

Given a map $\psi : G \to \Sym(n)$ (not necessarily a homomorphism)  and  a  permutation $f \in \Sym(n)$, define a map $\psi^f : \H_k(G,\phi) \to \Sym(n)$  by
$$
 \psi^f (g) \ := \   \psi(g_1)f_i^{j_1} \cdots \psi(g_r)f_i^{j_r},
$$
where $\sigma(g) = \iota(g_1) t^{j_1} \cdots \iota(g_r) t^{j_r}$.  The lemma will tell us that if $\psi$ and $f$ are suitably compatible then $\psi^f$ is close to a homomorphism.

\begin{lemma}
\label{to almost conjugating}
For all finite subsets $S \subseteq \H_k(G, \phi)$  and $\bar S \subseteq G$ such that $\iota(\bar S) \subseteq S$ and all $\d >0$, there exists a finite set
$S_0 \subseteq G$  with $\bar S \subseteq S_0$
and an $\varepsilon >0$ satisfying the following.  Suppose  $\psi: G \to \Sym(n)$ is an
 $(S_0,\varepsilon, n)$-approximation  and $f\in \Sym(n)$ is a permutation  of order dividing $k$ such that for all $b \in S_0 \cap B$
$$
d( \psi(\phi(b)) \circ f, f \circ \psi(b)) \ <  \  \varepsilon.
$$
Then for all $s_1,s_2 \in S$ for which $s_1s_2 \in S$,
\begin{equation}
d\left(   \psi^f(s_1)   \psi^f(s_2),   \psi^f(s_1 s_2)\right) \ < \  \d \label{nearly}
\end{equation}
and for all  $g \in \bar S$
\begin{equation}
d\left(\psi^f(\iota(g)), \psi(g) \right) \ <  \  \d. \label{final ineq}
\end{equation}
\end{lemma}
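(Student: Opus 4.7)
The plan is to reinterpret $\psi^f(s)$ as the image of the word $\sigma(s)$ under the natural multiplicative extension of $\psi$ and $t\mapsto f$ to all words in $\{G,t^{\pm 1}\}^\ast$; call this extension $F$, so that $\psi^f = F\circ \sigma$. Two words in $\{G,t^{\pm 1}\}^\ast$ represent the same element of $\H_k(G,\phi)$ if and only if one can be transformed into the other by a finite chain of elementary moves: \textbf{(i)} replacing $g_1\cdot g_2$ by $g_1 g_2 \in G$ (or the reverse); \textbf{(ii)} inserting or deleting $t^k$; \textbf{(iii)} swapping $bt$ with $t\phi(b)$ for $b\in B$. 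The strategy is to show that each such elementary move, when performed inside a word and evaluated via $F$, changes the output by at most $\varepsilon$ in normalized Hamming distance, and then to estimate the left-hand sides of~\eqref{nearly} and~\eqref{final ineq} by the triangle inequality along a fixed such chain.

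The three move types contribute Hamming errors as follows. Move~(i) swaps $\psi(g_1)\psi(g_2)$ for $\psi(g_1 g_2)$ inside a product; by the bi-invariance of $d$ (Lemma~\ref{hamming lemma}), the total value of $F$ changes by at most $d(\psi(g_1)\psi(g_2),\psi(g_1 g_2))<\varepsilon$, provided $g_1,g_2,g_1 g_2\in S_0$. Move~(ii) incurs no error at all, since $f^k=\id$. Move~(iii) swaps $\psi(b)f$ with $f\psi(\phi(b))$ inside a product; by bi-invariance this contributes at most $d(\psi(b)f,f\psi(\phi(b)))=d(\psi(b),f\psi(\phi(b))f^{-1})$, which is controlled by the conjugation hypothesis whenever $b\in S_0\cap B$ (after the short manipulation that rearranges $d(\psi(\phi(b))f,f\psi(b))<\varepsilon$ into the form needed, using bi-invariance together with $f^k=\id$).

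To fix the constants, enumerate the finitely many pairs $(s_1,s_2)\in S\times S$ with $s_1 s_2\in S$, and for each choose once and for all a specific chain of elementary moves transforming $\sigma(s_1)\sigma(s_2)$ into $\sigma(s_1 s_2)$. Likewise, for each $g\in \bar S$, fix a chain from $\sigma(\iota(g))$ to the one-letter word $g$. Such chains exist because in each case the two words represent the same element of $\H_k(G,\phi)$. Let $S_0\subseteq G$ consist of $\bar S$ together with every element of $G$ appearing anywhere in any of the chosen chains---in particular, every $b\in B$ used in an instance of move~(iii), and every product $g_1 g_2$ produced by an instance of move~(i). Let $M$ be the total number of elementary moves across all chosen chains, and set $\varepsilon:=\d/M$.

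Under these choices, the hypotheses of the lemma apply to every elementary move along each chain, so the triangle inequality of Lemma~\ref{hamming lemma}(i) yields $d(\psi^f(s_1)\psi^f(s_2),\psi^f(s_1s_2))\le M\varepsilon=\d$, which is~\eqref{nearly}; the same argument applied to the chain from $\sigma(\iota(g))$ to $g$ yields~\eqref{final ineq}. The main obstacle is bookkeeping rather than analysis: one must verify that the finitely many prescribed reduction chains collectively involve only finitely many $G$-elements, so that $S_0$ remains finite. This is automatic since each chain has finite length and there are finitely many chains, but it is the content of the lemma that such a prior bound on $S_0$ and $\varepsilon$ can be named \emph{before} the sofic approximation $\psi$ is revealed.
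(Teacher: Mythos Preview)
Your approach is essentially the paper's: both reduce the two inequalities to the fact that the words $\sigma(s_1)\sigma(s_2)\sigma(s_1s_2)^{-1}$ and $g^{-1}\sigma(\iota(g))$ are trivial in $\H_k(G,\phi)$, hence are (bounded) products of conjugates of the defining relators $t^k$, relations in $G$, and $b^t\phi(b)^{-1}$, and then bound the Hamming error contributed by each relator. You phrase this as a chain of elementary rewriting moves; the paper phrases it as writing the relator as a product of at most $M$ conjugates of defining relators in an auxiliary finitely presented group $\Gamma''$. The bookkeeping is the same.

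One point deserves care. You correctly notice that the conjugation hypothesis as printed, $d(\psi(\phi(b))f, f\psi(b))<\varepsilon$, is \emph{not} the inequality that controls move~(iii): the relation $b^t=\phi(b)$ gives $bt=t\phi(b)$, so the relevant error is $d(\psi(b)f, f\psi(\phi(b)))$, which is the form appearing in Theorem~\ref{main-precise}\,(\ref{main-precise2}). Your proposed fix---``a short manipulation using bi-invariance together with $f^k=\id$''---does not actually bridge these: from $f\psi(b)f^{-1}\approx\psi(\phi(b))$ one cannot in general conclude $f^{-1}\psi(b)f\approx\psi(\phi(b))$, since $\phi(b)\in A$ need not lie in $B$, so the hypothesis cannot be iterated along the $k$-cycle of conjugations. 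The discrepancy is a typo in the lemma's statement (the paper's own proof, and its application in Theorem~\ref{main-precise}, both use the Theorem~\ref{main-precise}\,(\ref{main-precise2}) direction). With the intended hypothesis your argument needs no fix at all; just drop the parenthetical and use $d(\psi(b)f, f\psi(\phi(b)))<\varepsilon$ directly for move~(iii).
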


\begin{proof}
Since $S$ is finite there exists an integer  $m$ and  a finite subset $S' \subseteq G$ containing $\bar  S$ such that $\sigma(S) \subseteq \set{\iota(S'), t}^m$.
Then  $S$ sits inside the  subgroup $\Gamma = \langle \iota(S'), t \rangle$ of $\H_k(G, \phi)$.  As $\Gamma$ is  finitely generated, there exists a finitely presented group
$\Gamma' = \langle S',t \mid R' \rangle$  which projects onto $\Gamma$---that is, the composition $S' \hookrightarrow \Gamma' \onto \Gamma$ is the identity.

By construction, every relation in $R'$ is also satisfied in $\H_k(G,\phi)$, and
so can be deduced from the defining relations in the presentation of $\H_k(G,\phi)$.
These defining relations come in three types: relations in $G$,
the relation $t^k=1$, and relations of the form $\iota(b)^t = \iota(\phi(b))$ for some $b \in B$. We can enlarge the set $S'$ to another finite subset $S'' \subseteq G$ by gathering all elements in $G$ needed to deduce all
the relations $r\in R'$, so as to view $S$ as a subset of a finitely presented group
$$
\Gamma''  \ = \  \langle S'',t \mid t^k, \  R'',  \  b^t \phi(b)^{-1} \mbox{ for } b\in B''   \rangle
$$
where $R''$ is a finite set of relations satisfied in the subgroup $\langle S'' \rangle$ of $G$, and $B''$ is a finite subset of $B$. Let $N  \geq k$ be a number such that every defining relation in $R''$ has length at most $N$ in the generating set $S''$ and all elements in $B''$ and $\phi(B'')$ can be expressed as words in $S''$ of length at most $N -1 $.
By construction, there exists a constant $M$ such that each relator in $\Gamma''$ of the form $s_3^{-1}s_1s_2$ for $s_1, s_2, s_3 \in  S$
or of the form  or $g^{-1} \sigma(\iota(g))$ for $g \in  \bar S$
can be written as product of at most $M$ conjugates of the defining relators in the above presentation.

Define $S_0 = (S'')^N$ and $\varepsilon = \d/ 8MN$.
Suppose that $\psi$ is an $(S_0,\varepsilon, n)$-approximation of $G$ and   $f \in \Sym(n)$ is a permutation of order dividing $k$ such that
$d( \psi(\phi(b)) \circ f, f \circ \psi(b)) < \varepsilon$ for all $b \in S_0 \cap \phi^{-1}(A \cap S)$.
Extend $\psi \restricted{S''}$ to a
homomorphism $\tilde \psi$ from the free group generated by $S''$ and $t$ to $\Sym(n)$ by mapping $t$ to the permutation $f$.
Defining relations $r = t^k$ or $r \in R''$ or $r=b^t\phi(b)^{-1}$  in our presentation of $\Gamma''$ have lengths at most $k$, $N$ and $2N$, respectively, and so $d(\tilde \psi(r),\id) \leq 2N \varepsilon$ by Lemma~\ref{hamming lemma} (i).
It then follows from Lemma~\ref{hamming lemma} (i) and (ii) that for all relators $r'$ of the form  $s_3^{-1}s_1s_2$  or $g^{-1} \sigma(\iota(g))$,
we have
\begin{equation}
d(\tilde\psi(r'),\id)  \ < \  2MN \varepsilon \ < \ \d/4. \label{r closeness}
\end{equation}
For the relators of the first type this gives us that $d(\tilde\psi(s_3^{-1}s_1s_2),\id) \ < \ \d/4$.
For those of  second type we get  both \eqref{final ineq}
for all $g \in \bar{S}$, and
\begin{equation}
d(\tilde\psi(s_i),  \psi^f( s_i)) \ < \ \d/4  \label{s_i closeness}
\end{equation}
 for  $i =1,2,3$.
Then  \eqref{r closeness} applied to $r'=s_3^{-1} s_1 s_2$ and \eqref{s_i closeness} give
\begin{align*}
d(\psi^f(s_3), \psi^f(s_1)\psi^f(s_2)) & \ =  \ d(\psi^f(s_3)^{-1} \psi^f(s_1) \psi^f(s_2),\id) \\
& < \    d(\tilde\psi(s_3)^{-1}\tilde\psi(s_1)\tilde\psi(s_2),\id) + \frac{3\d}{4}  \\
& = \  d(\tilde\psi(s_3^{-1}s_1s_2),\id)  + \frac{3\d}{4} \\
& <  \ \d,
\end{align*}
which yields inequality \eqref{nearly}.
\end{proof}

 \begin{proof}[Proof of Theorem~\ref{main-precise}, (\ref{main-precise2})~$\thus$~(\ref{main-precise1})]
This proof is similar to that  of  \eqref{sof1} $\thus$ \eqref{sof3} of Proposition~\ref{equiv}. Define
$$
I  \ := \ \set{\left. \left(S, \bar{S}, \delta\right) \, \right| \, \text{ finite } S \subseteq \H_k(G, \phi),  \text{ finite } \bar{S} \subseteq G   \text{ with } \iota(\bar{S}) \subseteq S, \ \delta >0 }.
$$
For $\left(S, \bar{S}, \delta
\right) \in I$, define $$\overline{\left(S,  \bar{S}, \delta\right)} \ := \  \set{\left. \left(S',   \bar{S}', \delta'\right) \in I \, \right| \,  S' \supseteq S,  \ \bar{S}' \supseteq \bar{S}, \ \delta' \leq \delta }.$$
As in our proof of Proposition~\ref{equiv}, the family  $\mathcal{F}$ of all subsets $\overline{\left(S, \bar{S}, \delta\right)}$  enjoys the finite intersection property,  and so there is  an ultrafilter $\mathcal{U}$ on $I$ with $\mathcal{F} \subseteq \mathcal{U}$.

Suppose $i = \left(S, \bar{S}, \delta\right) \in I$. Let $S_0 \subseteq G$ and $\ep>0$ be as per Lemma~\ref{to almost conjugating}.  Let  $\psi_i$   be an $(S_0, \ep, n_i)$-approximation of $G$ and   $f_i \in \Sym(n_i)$ a permutation as per  condition~(\ref{main-precise2}).  Together   $\psi_i$ and $f_i$   define maps
$\psi_i^{f_i} : \H_k(G,\phi) \to \Sym(n_i)$ and   Lemma~\ref{to almost conjugating} tells us that these $\psi_i^{f_i}$ enjoy  conditions \eqref{nearly} and \eqref{final ineq}.

If $g \in \bar{S} \subseteq S_0$, then \eqref{final ineq} gives us that  $d\left(\psi_i^{f_i}(\iota(g)), \psi(g) \right)  <   \d$.  If, additionally, $g \neq e$, then $d(\psi_i(g), \id) > 1-\ep$ because $\psi_i$ is an $(S_0, \ep, n_i)$-approximation.  Together these give
\begin{equation}
d(\psi_i^{f_i}(\iota(g)), \id) \ > \ 1-\delta-\ep \label{away from zero}
\end{equation}
for all $g \in \bar{S} \ssm \set{e}$.

The  $\set{\psi^{f_i}_i}_{i \in I}$  combine  to induce  a map
$$
\Psi^{\pmb{f}}: \H_k(G, \phi) \to \prod_{\mathcal U} \Sym(n_i ).
$$
This is a  group homomorphism because of condition \eqref{nearly}.  Its image $Q = \Psi^{\pmb{f}} \left(\H_k(G,\phi)\right)$ is a sofic quotient of $\H_k(G,\phi)$. In general,  $\Psi^{\pmb{f}}$ might not be injective, but
 \eqref{away from zero} tells us that
the composition   $G \stackrel{\iota}{\to} \H_k(G, \phi) \stackrel{\Psi^{\pmb{f}}}{\to}  Q$  is  injective.  In both cases, the details are similar to  our derivations of corresponding statements  in our proof of Proposition~\ref{equiv}.
\end{proof}

\begin{proof}[Proof of Theorem~\ref{main-precise},  (\ref{main-precise3})~$\thus$~(\ref{main-precise2})]
This implication is  immediate since $G$ is sofic.
\end{proof}

The remaining  implication  (\ref{main-precise2})~$\thus$~(\ref{main-precise3}) is significantly more complicated and uses that for an amenable group, any two   approximations into  the same $\Sym(n)$ are almost conjugate. This result is due  to Helfgott and Juschenko in the form given  but, as they explain, has origins in Elek and Szabo~\cite{es2}, builds on a lemma from Kerr and Li~\cite{kl},    and  is also comparable to Arzhantseva and P\v{a}unescu~\cite{ap}.   Helfgott and Juschenko's proof is a delicate analysis of the interplay between sofic approximations and the F{\o}lner characterization of amenability.

\begin{theorem}[Helfgott--Juschenko~\cite{he-ju}]
\label{amenable-conjugate}
Suppose $G$ is an amenable group,   $\ep > 0$, and $S$ is a finite subset of $G$. Then there is a finite subset $S' \subseteq G$ with  $S \subseteq S'$ and constants $N \in \Z^+, \d > 0$ such that for any two $(S',\d,n)$-approximations $\rho_1,\rho_2$ of $G$ with $n \ge N$, there exists $\tau \in \Sym(n)$ such that, for every $s \in S$,
$$
d(\tau^{-1}\circ \rho_1(s) \circ \tau,\rho_2(s)) < \ep.
$$
\end{theorem}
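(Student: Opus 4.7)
The plan is to exploit amenability by using a F\o lner set $\Phi \subseteq G$ as a common combinatorial template for the two sofic approximations. Concretely, both $\rho_1$ and $\rho_2$ should induce near-partitions of $\{1,\ldots,n\}$ into ``pseudo-orbits'' indexed by $\Phi$, which can then be matched up by a permutation $\tau$; bi-invariance of the Hamming metric together with near-$S$-invariance of $\Phi$ will force this $\tau$ to approximately conjugate $\rho_1$ to $\rho_2$ on elements of $S$. This is a Rokhlin-type lemma for sofic approximations, with the Ornstein--Weiss quasi-tiling theorem at its core.

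First I would choose $\Phi$ finite with $|s\Phi \triangle \Phi| < \eta|\Phi|$ for all $s \in S$, where $\eta \ll \varepsilon$, and set $S' := S \cup \Phi \cup \Phi^{-1} \cup S\Phi$. Pick $\delta>0$ and $N$ depending on $\eta$, $|\Phi|$, and $|S|$. For an $(S',\delta,n)$-approximation $\rho$, call $x \in \{1,\ldots,n\}$ \emph{good} if $g \mapsto \rho(g)(x)$ is injective on $\Phi$ and $\rho(s)\rho(g)(x) = \rho(sg)(x)$ whenever $s \in S$ and $g, sg \in \Phi$. A union bound over the $O(|\Phi|^2 + |S||\Phi|)$ relevant relations, each failing on at most $O(\delta) n$ points by Lemma~\ref{hamming lemma} and the approximation axioms, shows all but a small fraction of points are good. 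For good $x$, the set $P_x := \{\rho(g)(x) : g \in \Phi\}$ is a pseudo-$\Phi$-orbit of size exactly $|\Phi|$ on which $\rho$ locally mimics left multiplication.

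Next I would invoke the Ornstein--Weiss quasi-tiling theorem in the sofic formulation of Kerr--Li~\cite{kl} to extract from $\{P_x : x \text{ good}\}$ a sub-family of pairwise disjoint pseudo-orbits covering at least $(1-\varepsilon/4)n$ points (in practice this may require finitely many F\o lner sets of varying shapes and an iterative greedy packing, absorbed into the choice of $S'$). Applied to both $\rho_1$ and $\rho_2$ separately, this yields tile families $\{P^1_i\}_{i \in I_1}$ and $\{P^2_j\}_{j \in I_2}$ with common tile size $|\Phi|$, hence $|I_1|, |I_2| \in \bigl[(1-\varepsilon/4)n/|\Phi|,\, n/|\Phi|\bigr]$; fix a partial bijection $\pi: I_1 \to I_2$ matching all but $\varepsilon n/(4|\Phi|)$ tiles. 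Define $\tau \in \Sym(n)$ by
\[
\tau\bigl(\rho_2(g)(x^2_{\pi(i)})\bigr) := \rho_1(g)(x^1_i) \quad (g \in \Phi)
\]
on paired tiles, extended arbitrarily to a permutation on the remaining $O(\varepsilon n)$ points.

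To verify $d(\tau^{-1}\rho_1(s)\tau, \rho_2(s)) < \varepsilon$ for $s \in S$, take $y = \rho_2(g)(x^2_{\pi(i)})$ inside a paired tile with $sg \in \Phi$; by near-invariance this covers at least $(1-\eta)|\Phi|$ elements of the tile. Goodness of $x^2_{\pi(i)}$ and $x^1_i$ gives
\[
\tau^{-1}\rho_1(s)\tau(y) \;=\; \tau^{-1}\rho_1(s)\rho_1(g)(x^1_i) \;=\; \tau^{-1}\rho_1(sg)(x^1_i) \;=\; \rho_2(sg)(x^2_{\pi(i)}) \;=\; \rho_2(s)(y),
\]
so $\tau^{-1}\rho_1(s)\tau$ and $\rho_2(s)$ agree on all but an $\eta + \varepsilon/4 + O(\delta)$-fraction of points, which is less than $\varepsilon$ after parameter tuning. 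The main technical obstacle is the quasi-tiling step: transplanting the Ornstein--Weiss argument (originally for amenable group actions on measure spaces) to the discrete sofic setting requires a careful greedy/covering argument that works uniformly in $n$ and controls how pseudo-orbits of different shapes can be packed. This is the substantive content of Kerr--Li, and once it is in hand the construction and verification of $\tau$ are essentially bookkeeping with the Hamming metric.
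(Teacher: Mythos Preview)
The paper does not give its own proof of this theorem: it is quoted as a result of Helfgott--Juschenko, with the paper only remarking that the argument has origins in Elek--Szab\'o and ``builds on a lemma from Kerr and Li,'' and that Helfgott--Juschenko's proof is ``a delicate analysis of the interplay between sofic approximations and the F{\o}lner characterization of amenability.'' So there is no in-paper proof to compare against.

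Measured against the cited literature, your outline is the right one and matches what the paper's description points to: use F{\o}lner sets to build pseudo-$\Phi$-orbits inside each approximation, invoke an Ornstein--Weiss/Kerr--Li quasi-tiling to cover all but an $O(\varepsilon)$-fraction of $\{1,\ldots,n\}$ by disjoint such tiles, match tiles between $\rho_1$ and $\rho_2$, and read off $\tau$. Your verification that $\tau^{-1}\rho_1(s)\tau$ agrees with $\rho_2(s)$ on tile points with $sg\in\Phi$ is correct, and you rightly isolate the quasi-tiling step as the only substantive obstacle. The one point worth stressing (which you already acknowledge in a parenthetical) is that a single F{\o}lner set $\Phi$ genuinely does not suffice in general: the Ornstein--Weiss machinery requires a finite hierarchy $\Phi_1,\ldots,\Phi_m$ of nested F{\o}lner sets and an iterated greedy packing, and $S'$ must absorb all of them. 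With that caveat, your plan is an accurate high-level account of the proof the paper is citing.
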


We will also use the following lemma which essentially says that the $n$ in the definition of an $(S,\d,n)$-approximation is irrelevant  provided  it is sufficiently large.

\begin{lemma}
\label{embed}
Suppose $n = qm +r$ where $m,n,q,r$ are non-negative integers with $m,n \geq 1$ and $q = \lfloor n/m \rfloor$.  If  $\alpha: S \to \Sym(m)$ is an $(S, \eta, m)$-approximation of a finite subset $S$ of a group, then  composing
$$
(\overbrace{\a, \ldots, \a}^{q},1): S \to \overbrace{\Sym(m) \times \cdots \times \Sym(m)}^q \times \, \Sym(r)
$$
with the diagonal embedding into  $\Sym(n)$
gives an    $(S, \eta + \dfrac{1}{q+1}, n)$-approximation $\beta$.
\end{lemma}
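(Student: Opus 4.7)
The plan is to construct $\beta$ explicitly as the lemma's statement suggests and then verify the two conditions in the definition of an $(S, \eta + \tfrac{1}{q+1}, n)$-approximation. Partition $\{1, 2, \ldots, n\}$ into $q$ consecutive blocks $B_1, \ldots, B_q$ of size $m$ together with a residual block $B_{q+1}$ of size $r$, and fix identifications $B_j \leftrightarrow \{1,\ldots,m\}$. For each $g \in S$, let $\beta(g)$ act as $\alpha(g)$ on each $B_j$ with $j \le q$ under these identifications and as the identity on $B_{q+1}$; this is exactly the displayed composition.

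First I would handle the multiplicative condition. For $g, h \in S$ with $gh \in S$, both $\beta(g)\beta(h)$ and $\beta(gh)$ act as the identity on $B_{q+1}$, and on each $B_j$ with $j \le q$ the number of points where they disagree equals the number of points where $\alpha(g)\alpha(h)$ and $\alpha(gh)$ disagree, which is strictly fewer than $\eta m$ by hypothesis. Summing over the $q$ non-residual blocks gives fewer than $q\eta m$ disagreements in total, so
\[
d(\beta(g)\beta(h), \beta(gh)) \ < \ \frac{q\eta m}{qm + r} \ \leq \ \eta \ \leq \ \eta + \frac{1}{q+1}.
\]

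Next I would handle the identity-separation condition. For $g \in S \ssm \{e\}$ the permutation $\beta(g)$ fixes every point of $B_{q+1}$ (contributing $r$ fixed points) plus fewer than $\eta m$ points in each of $B_1, \ldots, B_q$ (since $d(\alpha(g), \id) > 1 - \eta$). Hence
\[
d(\beta(g), \id) \ > \ 1 - \frac{q\eta m + r}{qm + r} \ = \ 1 - \frac{q\eta m}{qm+r} - \frac{r}{qm+r} \ \geq \ 1 - \eta - \frac{1}{q+1}.
\]

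The main obstacle, such as it is, is the final inequality $\frac{r}{qm+r} \leq \frac{1}{q+1}$. I would dispatch this by cross-multiplying: it is equivalent to $r(q+1) \leq qm + r$, i.e.\ $rq \leq qm$, i.e.\ $r \leq m$, and this holds because $r$ is the remainder of $n$ upon division by $m$, so in fact $r < m$. The rest is purely elementary bookkeeping with normalized Hamming distance, and no deeper machinery is required.
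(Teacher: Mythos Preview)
Your proof is correct and follows essentially the same approach as the paper: partition $\{1,\ldots,n\}$ into $q$ blocks of size $m$ plus a residual block, verify the two defining conditions by block-wise counting, and reduce the final estimate to $r(q+1) \leq qm+r$. The only cosmetic difference is that for the separation condition the paper counts non-fixed points rather than fixed points, arriving at $1-\eta-(1-\eta)\tfrac{r}{n}$ before bounding by $1-\eta-\tfrac{1}{q+1}$; your direct fixed-point count is equivalent and arguably cleaner.
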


\begin{proof}
If $s_1, s_2, s_1s_2 \in S$, then $$
d\left(\b(s_1)\b(s_2),\b(s_1s_2)\right)  \ < \   \frac 1n \left(\overbrace{m\eta + \cdots + m\eta}^q\right)  \ = \   \frac {q m\eta}{qm+r}   \ < \
 \eta  \  < \ \eta + \frac{1}{q+1}.
$$

As for the second condition on approximations, suppose $s \in S \ssm \set{e}$. Then
\begin{align*}
d\left(\b(s), \id\right) &> \frac 1n \left(\overbrace{m(1-\eta) + \cdots + m(1-\eta)}^q\right) \ = \   1 - \eta - (1-\eta)\frac rn \ > \  1 - \eta - \frac{1}{q+1},
\end{align*}
with the final inequality coming from combining $1 - \eta <\eta$ and $r/n < 1/(q+1)$, the latter of which holds because $r<m$ implies that $qr+r <qm+r=n$.
\end{proof}

\begin{proof}[Proof of Theorem~\ref{main-precise} (\ref{main-precise2})~$\thus$~(\ref{main-precise3}).]
We are given a finite set  $S \subseteq G$ and some $\varepsilon >0$.

We aim to show that  for a suitable  finite set $S' \subseteq G$ with $S \subseteq S'$ and suitable  $\d >0$   and $N$,  every $(S',\d,n)$-sofic approximation $\psi$ of $G$ with $n> N$  admits some $f \in \Sym(n)$  of order dividing $k$ almost conjugating the action of $A$ under   $\psi$ to the action of $B$ under   $\psi$.
The idea will be to apply   condition~(\ref{main-precise2}) and Lemma~\ref{embed} to obtain some  approximation $\bar{\psi}$ of $G$ together with a permutation $\bar{f}$ of order $k$ which will almost conjugate the action of $A$ to the action of $B$. A priori $\psi$ and $\bar{\psi}$ will be unrelated, but  in fact by Theorem~\ref{amenable-conjugate}  will essentially be conjugate.  We will apply this conjugation   to $\bar{f}$ to obtain the requisite permutation $f$.

Here are the  details.
Let $\tilde \varepsilon =   \varepsilon/3$.  By Theorem~\ref{amenable-conjugate} there exits a finite subset $S' \subseteq G$ with $S \cup \phi(S \cap B ) \subseteq S'$  and $\tilde \d >0$ and $N_0 \in \Z^+$ such that any two $(S',\tilde \d,n)$-approximations $\rho_1$ and $\rho_2$ of $G$ with $n \geq N_0$ are \emph{almost conjugate} in that  there exists $\tau \in \Sym(n)$ such that for all $s \in S \cup \phi(S \cap B)$,
\begin{equation}
d(\tau^{-1}\circ \rho_1(s) \circ \tau, \ \rho_2(s))  \ < \  \tilde \varepsilon.
\end{equation}
Let $\d =\min\{\tilde \delta, \tilde \varepsilon\}$.

By Condition~(\ref{main-precise2}), there exists an $(S', \d/2,m)$-approximation $\psi'$  of $G$ together with a permutation $f \in \Sym(m)$  of order dividing $k$ such that for all $b \in S'  \cap \phi^{-1}(A \cap S')$,
\begin{equation}
d( \psi(b) \circ f, \ f \circ \psi(\phi(b)))  \ < \   \tilde \varepsilon.
\end{equation}

Let  $N = \max\{N_0,2m / \d\}$.  With $S'$ and $\d$  as defined above, suppose $\psi$ is an $(S',\d,n)$-approximation of $G$ with $n > N$.

Via Lemma~\ref{embed}, we can use $\psi'$ to construct another
$(S', \d,n)$-approximation $\bar \psi$ of $G$ and an associated permutation $\bar f$ which almost conjugates the action of $A$ to $B$ with the same error $\tilde\varepsilon$:
\begin{equation}
d( \bar \psi(b) \circ \bar f,  \ \bar f \circ  \bar \psi(\phi(b)))  \ < \   \tilde \varepsilon \label{dve}
\end{equation}
for all $b \in S' \cap B$.
This is possible because
$$
\frac{\d}{2} + \frac{1}{\lfloor  n/m\rfloor +1 }  \ \leq  \ \frac{\d}{2} +  \frac{1}{\lfloor  2 / \d\rfloor +1 } \  < \
\frac{\d}{2} + \frac{\d}{2}  \ \leq \  \d
$$
and given how  $\bar{\psi}$ is assembled from copies of $\psi$ and the identity (and correspondingly $\bar{f}$ from copies of $f$ and the identity), the error $\tilde \varepsilon$ of \eqref{dve} does not increase and $\bar{f}$, like $f$, has order dividing $k$.

By Theorem~\ref{amenable-conjugate} there is a permutation $\tau \in \Sym(n)$ which almost conjugates
$\psi$ to $\bar \psi$---i.e.,
\begin{equation}
d\left( \tau^{-1} \circ  \psi(s) \circ \tau, \  \bar \psi(s)\right)  \ < \   \tilde \varepsilon \label{edno}
\end{equation}
for all $s \in S'$.

Define $f  = \tau \circ  \bar f \circ \tau^{-1}$,  which is a permutation of order dividing $k$ since $\bar f$ has order dividing $k$.  Suppose  $b \in S  \cap \phi^{-1}(A \cap S)$.  We will complete our proof by showing that
$$
d(\psi(b) \circ f, \ f \circ \psi(\phi(b)) \   < \  \varepsilon.
$$
By definition of $f$,
\begin{equation}
f^{-1} \circ \psi(b) \circ  f  \ = \  \tau \circ \bar f^{-1} \circ  \tau^{-1} \circ \psi(b) \circ  \tau   \circ   \bar f \circ \tau^{-1}. \label{shest}
\end{equation}
Since $b \in S \subseteq S'$, by \eqref{edno},
\begin{equation}
d\left(
\tau \circ \bar f^{-1} \circ \tau^{-1} \circ \psi(b) \circ  \tau  \circ  \bar f \circ \tau^{-1}
,
\tau \circ \bar f^{-1} \circ \bar \psi (b)  \circ   \bar f \circ \tau^{-1}
\right) \ < \  \tilde  \varepsilon. \label{tri}
\end{equation}
By \eqref{dve},
$
d \left(\bar f^{-1} \circ \bar\psi (b)  \circ  \bar  f,  \ \bar\psi(\phi(b)) \right)  <  \tilde \varepsilon,
$
and therefore
\begin{equation}
d\left(
\tau \circ \bar{f}^{-1} \circ \bar\psi (b)  \circ   \bar{f} \circ \tau^{-1}
,
\tau \circ \bar\psi (\phi(b ))  \circ \tau^{-1}
\right) \ <  \ \tilde  \varepsilon. \label{chetiri}
\end{equation}
Since $\phi(b) \in S \cap B \subseteq S'$, by \eqref{edno} again,
\begin{equation}
d\left(
\tau \circ \bar\psi (\phi(b) )  \circ \tau^{-1}
,
\psi(\phi(b))
\right) \ < \  \tilde  \varepsilon. \label{pet}
\end{equation}

Together,  \eqref{shest}--\eqref{pet} yield the first inequality of:
$$
d(\psi(b) \circ f, \ f \circ \psi(\phi(b))  \ = \ d\left(
f^{-1} \circ \psi(b) \circ f,
\psi(\phi(b))
\right) \ < \  3 \tilde \varepsilon \  =  \ \varepsilon,
$$
which completes the proof.
\end{proof}

\section{Applications of Theorem~\ref{main-precise}}
\label{examples}
\label{main proof section}

In this section we will examine the groups $\Z^2$, the 3-dimensional integral Heisenberg group $\mathcal{H}$, $\BS(1,m)$,  $\Z \wr \Z$, and the 2-generator metabelian group  in the context of Theorem~\ref{main-vague} (or, in its precise form, Theorem~\ref{main-precise}).  Each of these groups is amenable.  We will exhibit families of maps witnessing to their soficity, and will then  explain what Theorem~\ref{main-precise} allows us to conclude about the existence of seemingly pathological permutations.  In particular, we will explain how the case of $\BS(1,m)$ yields Theorems~\ref{HJ thm} and  \ref{GS version}, and how  $\Z \wr \Z$ yields Theorem~\ref{heuristicapplication}.

We begin with $\Z^2$, which we view as an introductory example.

\subsection{\texorpdfstring{$\Z^2$}{Z x Z}}
\label{ZbyZ section}  We present $\Z^2$ as  $\langle a, b \mid ab=ba \rangle$, so  $\phi : \Z \to \Z$, given by $b \mapsto a$, is the map defining  $\H_k(\Z^2)$.

To obtain a  family of functions witnessing to the soficity of $\Z^2$, we identify  $\Sym(n)$ with $\Sym(\Z/n\Z)$ and then for $p, q \in \N$, define $\psi_{n,p,q}: G \to \Sym(n)$ by
\begin{align*}
\psi_{n,p,q}(a) & : x \mapsto x+p \text{, and} \\
\psi_{n,p,q}(b) & : x \mapsto x+q.
\end{align*}

\begin{lemma}
\label{lm-soficapproximationZbyZ}
For any finite set $S \subseteq \Z^2$ and any $\d >0 $, there exists a constant $C$ such that $\psi_{n,p,q}$ is an $(S,\d,n)$-approximation of $\Z^2$ provided that $p> Cq$ and $n > Cp$.
\end{lemma}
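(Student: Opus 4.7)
The plan is to exploit the abelian structure: since $\Z^2 = \langle a, b \mid ab = ba \rangle$ is abelian and $\psi_{n,p,q}$ sends $a, b$ to the commuting translations $x\mapsto x+p$ and $x\mapsto x+q$, the map extends to an honest group homomorphism $\Z^2 \to \Sym(\Z/n\Z)$ with $\psi_{n,p,q}(a^i b^j)$ equal to translation by $ip+jq \pmod n$. Consequently, the first defining property of an $(S,\d,n)$-approximation, namely $d(\psi(g)\psi(h),\psi(gh)) < \d$ for $g,h \in S$, holds automatically with distance exactly $0$, imposing no constraint on $C$.

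Only the second condition requires work: for every $g = (i,j) \in S \ssm \set{(0,0)}$, I need $d(\psi_{n,p,q}(g), \id) > 1 - \d$. Translation by $c \in \Z/n\Z$ is either the identity (if $c \equiv 0 \pmod n$) or else fixed-point-free (if $c \not\equiv 0 \pmod n$), so $d(\psi_{n,p,q}(g),\id) \in \set{0,1}$. Hence the task reduces to the purely arithmetic statement that $ip + jq \not\equiv 0 \pmod n$ for every $(i,j) \in S \ssm \set{(0,0)}$; when this holds the distance is actually $1$, which exceeds $1 - \d$ for any $\d > 0$.

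To enforce this arithmetic condition, I will set $M := \max \set{|i| + |j| : (i,j) \in S}$ and take $C := 2M + 1$. The hypothesis $p > Cq$ then gives $p > Mq$, so that whenever $i \neq 0$ we have $|ip| \geq p > Mq \geq |jq|$, precluding cancellation and forcing $ip + jq \neq 0$; and when $i = 0$, necessarily $j \neq 0$, so $ip + jq = jq \neq 0$ as well. Meanwhile $n > Cp \geq 2Mp \geq Mp + Mq \geq |ip| + |jq|$ gives $|ip + jq| < n$. Thus $ip + jq$ is a nonzero integer of absolute value strictly less than $n$, and in particular is nonzero modulo $n$.

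I do not anticipate a genuine obstacle: the entire argument is elementary and the dependence on $\d$ is vacuous beyond $\d > 0$. The only conceptual point is the observation at the outset that abelianness makes $\psi_{n,p,q}$ an honest homomorphism, which sidesteps the multiplicative approximation condition entirely and reduces the lemma to choosing $p$ large enough to prevent cancellations among elements of $S$ and $n$ large enough to prevent wrap-around.
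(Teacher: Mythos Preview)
Your proof is correct and follows essentially the same approach as the paper's: both observe that $\psi_{n,p,q}$ is a genuine homomorphism so only the second condition needs checking, and both choose $C$ large relative to the coordinates appearing in $S$ so that $\lambda p + \mu q$ is a nonzero integer of absolute value less than $n$. The paper is terser (it simply asserts that $\lambda p + \mu q$ is not divisible by $n$ after taking $|\lambda|,|\mu| < C/3$), while you spell out the two cases $i\neq 0$ and $i=0$ explicitly, but the underlying argument is identical.
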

\begin{proof}
Take $C$ sufficiently large that  $S \subseteq  \left\{ \left. a^{\lambda}b^{\mu} \ \right| \  |\lambda| < C/3, \ |\mu| < C/3\right\}$.
Since the map $\psi_{n,p,q}$ is a group homomorphism, we only need to show that
$d(\psi_{n,p,q}(s), \id) > 1 - \d$ for all $s \in S\smallsetminus\set{1}$ provided that $p> Cq$ and $n > Cp$.
Then for $s =  a^{\lambda}b^{\mu} \in S$ we find $\psi_{n,p,q}\left(s\right)$ is translation by $\lambda p + \mu q$, which is not divisible by $n$ (unless $\lambda=\mu=0$), and therefore $d(\id, \psi_{n,p,q}(s)) = 1$.
\end{proof}

The equivalence \emph{(\ref{main-precise1})}~$\Leftrightarrow$~\emph{(\ref{main-precise3})} of Theorem~\ref{main-precise}    tells us that for  $k \in \N$, the group $\H_k(\Z^2)$ has a sofic quotient $Q$ such that the composition $\Z^2 \to \H_k(\Z^2) \to Q$ is injective if and only if for any $n,p,q$ such that $n/p$ and $p/q$ are sufficiently large, there is a permutation $f: \Z/n\Z \to \Z/n\Z$ of order $k$ with $d( \psi_{n,p,q}(b) \circ f, f \circ \psi_{n,p,q}(a))   <      \varepsilon$.  As   $(\psi_{n,p,q}(b) \circ f)(x) = f(x) +q$ and    $(f \circ \psi_{n,p,q}(a))(x)  = f(x+p)$, the latter condition amounts to  $f(x+p) = f(x) + q$ for at least $(1-\ep)n$ elements  $x \in \Z/n\Z$.

However, for    $k \geq 1$, the group  $\overline\H_k(\Z^2)$  is a right-angled Artin group, so it is linear and thus residually finite (see~\cite{hs-wi}).  Thus $\H_k(\Z^2)$ and  $\overline{\H}_k(\Z^2)$ are sofic.   (For $k\geq 4$,  we reached the same conclusion  in Corollary~\ref{so sofic cor} via the residual solvability established in Theorem~\ref{ressolvable}.  For $k \leq 3$ the group is abelian, and thus also sofic.) And for $k \geq 2$, $\Z^2 \hookrightarrow \overline\H_k(\Z^2)$.
Thus:

\begin{theorem}
\label{th-functions-ZbyZ}
Suppose $k \geq 2$ and $\ep >0$.  Then there exists $C>0$ such that  for all $n,p,q$ satisfying $n \geq Cp$ and $p\geq Cq$,  there exists a permutation $f \in \Sym(\Z/n\Z)$  of order dividing $k$ such that
$$
f(x+p) = f(x) + q
$$
for at least $(1-\ep)n$ elements  $x \in \Z/n\Z$.
\end{theorem}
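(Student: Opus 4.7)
The plan is to obtain Theorem~\ref{th-functions-ZbyZ} as a direct application of the amenable equivalence \emph{(\ref{main-precise1})}~$\Leftrightarrow$~\emph{(\ref{main-precise3})} in Theorem~\ref{main-precise}, combined with the explicit sofic approximations $\psi_{n,p,q}$ constructed in Lemma~\ref{lm-soficapproximationZbyZ}.

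First I would verify condition \emph{(\ref{main-precise1})} of Theorem~\ref{main-precise} for the data $G=\Z^2$, $A = \langle a\rangle$, $B=\langle b \rangle$, $\phi:b\mapsto a$ and the given $k\geq 2$. The group $\overline{\H}_k(\Z^2)$ is a right-angled Artin group, hence linear, hence residually finite, hence sofic; consequently $\H_k(\Z^2)=\overline{\H}_k(\Z^2)\rtimes C_k$ is sofic as well. For $k\geq 2$ the natural map $\iota:\Z^2 \hookrightarrow \H_k(\Z^2)$ is injective (as noted just before the theorem), so condition~\emph{(\ref{main-precise1})} holds with $Q = \H_k(\Z^2)$. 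Since $\Z^2$ is amenable, condition~\emph{(\ref{main-precise3})} then applies.

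Applying~\emph{(\ref{main-precise3})} to the finite set $S = \{1, a, b\}$ (for which $S \cap \phi^{-1}(A\cap S) = \{b\}$) and the given $\ep$ yields a finite set $S'\supseteq S$, a $\d>0$, and an $N\in\Z^+$ such that every $(S',\d,n)$-approximation $\psi$ with $n>N$ admits an $f\in\Sym(n)$ of order dividing $k$ with
$$
d(\psi(b)\circ f,\, f\circ\psi(a))\ <\ \ep.
$$
By Lemma~\ref{lm-soficapproximationZbyZ} applied to this $S'$ and $\d$, there is a constant $C_0$ such that $\psi_{n,p,q}$ is an $(S',\d,n)$-approximation whenever $p > C_0 q$ and $n > C_0 p$. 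Setting $C=\max\{C_0,N\}$, for all $n\geq Cp$ and $p\geq Cq$ we have $n\geq C \geq N$ and $\psi_{n,p,q}$ is an $(S',\d,n)$-approximation, so the promised $f$ exists and has order dividing $k$. Unwinding the definitions, $(\psi_{n,p,q}(b)\circ f)(x) = f(x)+q$ and $(f\circ\psi_{n,p,q}(a))(x) = f(x+p)$, so the Hamming inequality above translates directly into $f(x+p) = f(x)+q$ holding on at least $(1-\ep)n$ elements $x\in\Z/n\Z$.

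I do not anticipate a serious obstacle; the heavy lifting is already contained in Theorem~\ref{main-precise} and in the residual finiteness of right-angled Artin groups. The only point requiring care is arranging a single constant $C$ that simultaneously controls the approximation parameters coming out of Theorem~\ref{main-precise} (the $S',\d,N$) and the ratio hypotheses $n/p,\,p/q$ of Lemma~\ref{lm-soficapproximationZbyZ}; taking $C$ to be the maximum of the two relevant thresholds handles this uniformly.
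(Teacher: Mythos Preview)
Your proposal is correct and follows essentially the same route as the paper: verify condition~\emph{(\ref{main-precise1})} of Theorem~\ref{main-precise} by noting that $\overline\H_k(\Z^2)$ is a right-angled Artin group (hence residually finite, hence sofic) with $\Z^2$ embedding for $k\geq 2$, then invoke~\emph{(\ref{main-precise3})} together with the explicit approximations $\psi_{n,p,q}$ of Lemma~\ref{lm-soficapproximationZbyZ} and unwind the Hamming condition. The only cosmetic points are that $S\cap\phi^{-1}(A\cap S)=\{1,b\}$ rather than $\{b\}$ (harmless, since the $b=1$ case is vacuous), and that to guarantee the strict inequalities $n>N$ and $n>C_0p$, $p>C_0q$ you should take $C$ strictly larger than $\max\{C_0,N\}$ rather than equal to it.
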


In some cases this is  straight-forward. If $n$ is a prime congruent to $1$ modulo $k$, then there exists  $l \in \Z/n\Z$ such that $l^k = 1$  and $q = lp$ in $\Z/n\Z$, and then $f: \Z/n\Z \to \Z/n\Z$ mapping $ x \mapsto lx$ satisfies the given conditions, because $f(x+p)   = lx + lp   = lx + q = f(x) +q$ for all $x \in \Z/n\Z$ and $f^k = \id$.  Indeed, such $f$ arise  from a natural sofic quotient of $\H_k(\Z^2)$---take the semidirect product of the cyclic group of order $k$ and the abelianization of $\overline \H_k(G)$.  Then $\H_k(\Z^2)$ maps onto $C_k \ltimes \Z/n\Z$, where the action is by multiplication by $l$.

But in most cases errors are inevitable.  Suppose $f:  \Z/n\Z \to \Z/n\Z$ satisfies $f(x+p) = f(x) + q$ for all $x \in  \Z/n\Z$.  Then $f^l(x+q^l) = f^l(x) + q^l$ for all $x \in \Z/n\Z$ and all $l \in \N$.  So if $f^k = \id$, then  $n$ divides $q^k-p^k$.

Whether or not $n$ divides $q^k-p^k$, by  Theorem~\ref{th-functions-ZbyZ}, there exist such functions $f$ satisfying $f(x+p) = f(x) + q$ for most $x \in \Z/n\Z$. Such $f$ could be constructed explicitly by carefully following the arguments in our proofs of Theorems~\ref{main-precise} and \ref{amenable-conjugate}  (using that there are F{\o}lner sets for $\Z^2$ of a very simple form), but doing this in general would be quite technical.

\begin{remark} \label{dependance1}
In such a simple example it is possible to determine the dependance of the constant $C$ in Theorem~\ref{th-functions-ZbyZ}: one can   take $C = O(\ep^{-k})$.
\end{remark}

\subsection{The Heisenberg group}
\label{Heis section}
The Heisenberg group $\Heis$ has presentation
$$
\Heis = \langle a, b \mid [a,[a,b]] = [b,[a,b]] = 1 \rangle.
$$
It is nilpotent and so  is amenable and residually  finite.  Identify $\Sym(n^2)$ with $\Sym\left((\Z/n\Z)^2\right)$. Define $\psi_n: \Heis \to \Sym(n^2)$  for  $n \in \N$ by
\begin{align*}
\psi_n(a) & : (x,y) \mapsto (x,y+1) \text{, and} \\
\psi_n(b) & : (x,y) \mapsto (x+y,y),
\end{align*}
which extends to  $\Heis$ since $\psi_n(a)$ and $\psi_n(b)$ satisfy the defining relations of $\Heis$. This action of $\Heis$ arises the finite quotient $\Heis_n := \Heis/ \langle a^n,b^n \rangle$  acting on cosets of the subgroup
$\langle a \rangle$.
\begin{lemma}
\label{lm-soficapproximationHeis}
For all finite sets $S \subseteq \Heis$ and all $\d >0 $, there exists $C>0$ such that  $\psi_{n}$ is $(S,\d,n^2)$-approximation of $\Heis$  for all $n> C$.
\end{lemma}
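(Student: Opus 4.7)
The plan is as follows. As noted just before the lemma, $\psi_n(a)$ and $\psi_n(b)$ satisfy the defining relations of $\Heis$---indeed $\psi_n$ factors through the finite quotient $\Heis_n = \Heis/\langle a^n, b^n\rangle$---so $\psi_n$ is a genuine group homomorphism from $\Heis$ to $\Sym(n^2)$, and the first defining condition of an $(S, \d, n^2)$-approximation is satisfied with zero error. The entire content of the lemma is therefore the second condition: that $d(\psi_n(g), \id) > 1 - \d$ for each non-identity $g \in S$ once $n$ is sufficiently large.

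To verify this I would invoke the standard normal form for $\Heis$. Because $[a,b]$ is central and $\Heis$ is nilpotent of class two, every element can be written uniquely as $g = a^i b^j [a,b]^k$ with $i,j,k \in \Z$, and $g = 1$ precisely when $i = j = k = 0$. A direct computation from the definitions (being careful about the order of composition) shows that $\psi_n([a,b])$ acts on $(\Z/n\Z)^2$ by $(x,y) \mapsto (x - 1,\, y)$, and then
\[
\psi_n\!\left(a^i b^j [a,b]^k\right)(x,y)  \ \equiv \ (x + jy - k,\ y + i) \pmod n.
\]

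From this closed form, $(x,y)$ is fixed by $\psi_n(g)$ if and only if $i \equiv 0 \pmod n$ and $jy \equiv k \pmod n$. A brief case analysis then bounds the number of fixed points: zero when $i \not\equiv 0 \pmod n$; at most $n \cdot \gcd(j, n)$ when $i \equiv 0$ and $j \not\equiv 0 \pmod n$; and again zero when $i \equiv j \equiv 0 \pmod n$ but $k \not\equiv 0 \pmod n$. Since $S$ is finite there is a uniform bound $M$ on $|i|,|j|,|k|$ across $g \in S$; taking $C := \max(M, M/\d)$ then forces at least one of $i,j,k$ to be a nonzero integer of absolute value strictly less than $n$ for every non-identity $g \in S$, so the bounds above yield $d(\psi_n(g), \id) \geq 1 - M/n > 1 - \d$. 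No serious obstacle is anticipated; the only mildly delicate step is computing $\psi_n([a,b])$ correctly from the formulas for $\psi_n(a)$ and $\psi_n(b)$, where conventions must be tracked carefully.
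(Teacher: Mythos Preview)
Your proposal is correct and follows essentially the same route as the paper's proof: observe that $\psi_n$ is a genuine homomorphism so only the second approximation condition needs checking, write elements in the normal form $a^{\lambda}b^{\mu}[a,b]^{\nu}$, compute the explicit action $(x,y)\mapsto(x+\mu y-\nu,\,y+\lambda)$, bound the number of fixed points, and choose $C$ in terms of the uniform bound on the exponents and $\d$. Your case analysis is in fact slightly more careful than the paper's one-line bound (which is stated as ``at most $|\lambda|n$ fixed points'' and glosses over the $\lambda=0$ case), but the substance is the same.
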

\begin{proof}
Suppose $S \subseteq \mathcal{H}$ is finite.
As in Lemma~\ref{lm-soficapproximationZbyZ}, as $\psi_n$ is a homomorphism, it suffices to check that  there exists an integer $C$ such that when $n > C$,  the permutation  $\psi_{n}(s)$ is far from the identity for all $s \in  S \ssm \set{e}$.
Every element of $\Heis$ can be expressed uniquely as $a^{\lambda}b^{\mu}[a,b]^{\nu}$.
So there exists $N$ such that
$$
S \subseteq \left\{ \left. a^{\lambda}b^{\mu}[a,b]^{\nu} \, \right| \, |\lambda|, |\mu|, |\nu| < N \right\}.
$$
One computes that
$$
\psi_n(a^{\lambda}b^{\mu}[a,b]^{\nu}): (x,y) \mapsto (x+ \mu y - \nu,y + \lambda),
$$
which is a permutation with at most $|\lambda| n$ fixed points  (provided that this element is not trivial---i.e., $n \nmid \lambda$ or $n  \nmid  \mu$ or $n \nmid   \nu$). Therefore if $n > C := N/\d$, then $\psi_{n}$ is an $(S,\d,n^2)$-approximation.
\end{proof}

So Theorem~\ref{main-precise}  tells us that for all $k \in \N$ the group $\H_k(\Heis)$ has a   sofic quotient $Q$ such that $\mathcal{H} \to \H_k(\Heis) \to Q$ is injective if and only there exist infinitely many  $n$ and functions $f: (\Z/n\Z)^2 \to (\Z/n\Z)^2$ of order dividing $k$ which conjugate the action of $b$ to the action of $a$ under $\psi_n$ up to error $\ep$.

For $k\geq 4$, Corollary~\ref{so sofic cor} tells us that $\H_k(\Heis)$ is sofic. As for the case $k=2$, we have that $\bar \H_2(\Heis)\simeq \Heis$ is also sofic.   And for $k=3$, it is not hard to construct a  surjective map from $\overline\H_3(\Heis)$ onto the sofic group $\mathrm{SL}_3(\Z)$  such that the composition $\Heis \to \bar \H_3(\Heis)\to \mathrm{SL}_3(\Z)$ is injective.
  A sofic quotient of $\H_3(\Heis)$ into which $\Heis$ injects could also be obtained via Proposition~\ref{soficquotient}.
(We do not know whether the group $\H_3(\Heis)$ itself is sofic, but   see no reason it should not be.)

So applying the condition
$d( \psi(b) \circ f, f \circ \psi(\phi(b)))    <      \varepsilon$
of Theorem~\ref{main-precise}  \eqref{main-precise3}   to  $\psi_n$ and to $b$ and $a=\phi(b)$ we get the following.

\begin{theorem}
\label{th-functions-Heis}
For all $\ep >0$ and all $k \geq 2$, there exists an integer $C$ such that for all $n > C$ there exists a
permutation $f \in \Sym(n^2)$ of order dividing $k$ which, when expressed as  $f(x,y) = \left(f_{1}(x,y), f_{2}(x,y) \right)$ so that $f_{1},f_{2}: \Z/n\Z \times \Z/n\Z \to \Z/n\Z$ are its coordinate functions, satisfies
$$
f_{1}(x,y+1) = f_{1}(x,y) + f_{2}(x,y)
\quad \text{ and } \quad
f_{2}(x,y+1) = f_{2}(x,y)
$$
for at least $(1-\ep)n^2$ pairs $(x,y)$.
\end{theorem}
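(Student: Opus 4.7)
The plan is to apply the implication (\ref{main-precise1})~$\thus$~(\ref{main-precise3}) of Theorem~\ref{main-precise} to $G=\Heis$ using the explicit sofic approximations $\psi_n$ from Lemma~\ref{lm-soficapproximationHeis}, then unpack the ``almost conjugating'' conclusion into the coordinate recurrence claimed in the theorem. Since $\Heis$ is nilpotent, hence amenable, condition (\ref{main-precise3}) is available as soon as condition (\ref{main-precise1}) holds for $\H_k(\Heis)$.

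First I would verify (\ref{main-precise1}) case by case. For $k\ge 4$, Corollary~\ref{so sofic cor} gives that $\overline{\H}_k(\Heis)$ is residually solvable, hence so is the finite extension $\H_k(\Heis)=\overline{\H}_k(\Heis)\rtimes C_k$, which is therefore sofic; the embedding $\Heis\hookrightarrow \H_k(\Heis)$ then follows from Lemma~\ref{injective}, since $\langle a\rangle\cap\langle b\rangle=\{1\}$ in $\Heis$ (as seen on the abelianization $\Z^2$). For $k=3$, Example~\ref{example groups}(3) supplies the homomorphism $\pi:\Heis\to\langle a\rangle\times\langle b\rangle$ required by Proposition~\ref{soficquotient}, yielding a homomorphism $\mu:\H_3(\Heis)\to C_3\ltimes\Heis^3$ whose restriction to $\Heis_1$ is injective. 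Its target is a finite-index extension of the residually finite (hence sofic) group $\Heis^3$, so $\mu(\H_3(\Heis))$ is a sofic quotient into which $\Heis$ embeds. For $k=2$, the excerpt notes $\overline{\H}_2(\Heis)\simeq\Heis$, and $\H_2(\Heis)=\overline{\H}_2(\Heis)\rtimes C_2$ is again a finite extension of a sofic group, with $\Heis$ embedding as $\overline{\H}_2(\Heis)$.

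With (\ref{main-precise1}) established, Theorem~\ref{main-precise} furnishes, for any $\ep>0$ and the finite set $S=\{a,b\}\subseteq\Heis$, an enlarged finite set $S'\subseteq\Heis$, a threshold $\delta>0$ and an integer $N_0$ such that every $(S',\delta,m)$-approximation $\psi$ with $m\ge N_0$ admits a permutation $f\in\Sym(m)$ of order dividing $k$ with $d(\psi(b)\circ f,\,f\circ\psi(a))<\ep$. By Lemma~\ref{lm-soficapproximationHeis} applied to $S'$ and $\delta$, there exists a constant $C'>0$ such that $\psi_n$ is an $(S',\delta,n^2)$-approximation for all $n>C'$. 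Taking $C:=\max(C',\lceil\sqrt{N_0}\,\rceil)$, I obtain, for every $n>C$, a permutation $f\in\Sym(n^2)=\Sym((\Z/n\Z)^2)$ of order dividing $k$ satisfying $d(\psi_n(b)\circ f,\,f\circ\psi_n(a))<\ep$.

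Finally I would unpack this inequality into the stated recurrence. Writing $f(x,y)=(f_1(x,y),f_2(x,y))$ and using the formulas for $\psi_n(a)$ and $\psi_n(b)$,
$$
\psi_n(b)\circ f(x,y)\ =\ \bigl(f_1(x,y)+f_2(x,y),\,f_2(x,y)\bigr),\qquad f\circ\psi_n(a)(x,y)\ =\ \bigl(f_1(x,y+1),\,f_2(x,y+1)\bigr),
$$
so the normalized Hamming bound says precisely that the two identities $f_1(x,y+1)=f_1(x,y)+f_2(x,y)$ and $f_2(x,y+1)=f_2(x,y)$ hold on at least $(1-\ep)n^2$ pairs $(x,y)$. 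The main (and only substantive) obstacle is the bookkeeping in verifying condition (\ref{main-precise1}) for the small values $k=2,3$ not covered by Corollary~\ref{so sofic cor}; all other steps are direct applications of results already in place.
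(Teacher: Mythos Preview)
Your proposal is correct and follows essentially the same route as the paper: verify condition~(\ref{main-precise1}) of Theorem~\ref{main-precise} for all $k\ge 2$ (using Corollary~\ref{so sofic cor} and Lemma~\ref{injective} for $k\ge 4$, the isomorphism $\overline{\H}_2(\Heis)\simeq\Heis$ for $k=2$, and Proposition~\ref{soficquotient} for $k=3$), then invoke (\ref{main-precise1})~$\Rightarrow$~(\ref{main-precise3}) with the approximations $\psi_n$ of Lemma~\ref{lm-soficapproximationHeis} and unpack the almost-conjugation inequality into the coordinate recurrences. The only cosmetic difference is that for $k=3$ the paper also notes the alternative of mapping $\overline{\H}_3(\Heis)$ onto $\mathrm{SL}_3(\Z)$, but your use of Proposition~\ref{soficquotient} is equally valid and is mentioned there as well.
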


\begin{remark} \label{dependance2}
There is no $f$ such that the above equality holds for all pairs $(x,y)$, since $\psi_{n}(a)$ and $\psi_{n}(b)$ are not
conjugate inside $\Sym(n^2)$---one of them has (a few) fixed points and the other has none.
\end{remark}

\begin{remark}
It is again possible to estimate the dependance of the constant $C$. One can show that we can take $C = O(\ep^{-3k})$.
\end{remark}

\begin{remark}
The generators $a$ and $b$ play asymmetric roles in the definition of $\psi_n$: for example,
$d(\psi_n(a),\id ) = 1$,
but
$d(\psi_n(b),\id ) =1-  1/n <1$.
One can instead take the action of $\Heis_n$ on itself which will lead to a permutation representation $\Heis \to \Sym(n^3)$ in which the roles of $a$ and $b$ are symmetric.   The functions $f$ of the resulting analogue of Theorem~\ref{th-functions-Heis} can be constructed in such a way that the equations are satisfied for \emph{all} points if and only if there is an nontrivial semisimple element of order dividing $k$ in the group $\SL_2(\Z/n\Z)$.
\end{remark}

\begin{remark}
One can also consider a   Higman-like construction from $\Heis$ in which $\phi$ maps one of the standard generators to a generator of the center: define 
$$
G = \Heis = \langle b, c \mid [b,[b,c]] = [c,[b,c]] =1 \rangle, \quad  \phi: b \mapsto a,
$$
where $a := [b,c]$ so that
$$
\overline\H_k(G, \phi) = \langle \, b_1, c_1,  \dots, b_k, c_k \mid  [b_i, [b_i, c_i]]= [c_i, [b_i, c_i]] = 1,  b_{i} = [b_{i+1},c_{i+1}]  \ \forall i \ (\textup{mod } k) \rangle.
$$

We do not know whether the group $\overline\H_k(G, \phi)$ is sofic, since Theorem~\ref{ressolvable} does not apply.  However $b_i \mapsto \id + e_{i,k+1}$ and $c_i \mapsto \id - e_{i+1,i}$ (indices mod $k$) defines a homomorphism
$$
\overline\H_k(G, \phi) \  \to \  \SL_k(\Z) \ltimes \Z^k \ \subseteq  \ \SL_{k+1}(\Z),
$$
and for $k\geq 2$ the group $\Heis$ injects into this quotient (i.e.,   image) of $\overline\H_k(G, \phi)$ which is linear and thus  sofic.

As before (but with $a$ and $b$ now changed to $b$ and $c$, respectively) define $\psi_n: \Heis \to \Sym(n^2)$  for  $n \in \N$ by
\begin{align*}
\psi_n(b) & : (x,y) \mapsto (x,y+1) \text{, and} \\
\psi_n(c) & : (x,y) \mapsto (x+y,y).
\end{align*}
Then, as $\psi_n([b,c]) : (x,y) \mapsto (x-1,y)$, applying Theorem~\ref{main-precise} leads to:
\end{remark}

\begin{theorem}
\label{th-functions-Heis2}
Functions $f$ exist exactly as per Theorem~\ref{th-functions-Heis}, except with the displayed equations replaced by:
$$
f_{1}(x-1,y) = f_{1}(x,y) + f_{2}(x,y)
\quad \text{ and } \quad
f_{2}(x-1,y) = f_{2}(x,y).
$$
\end{theorem}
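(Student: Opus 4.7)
The plan is to follow the template of the proof of Theorem~\ref{th-functions-Heis}, with the new Higman-like construction of the preceding remark as input. First, I would verify that the map $b_i \mapsto \id + e_{i,k+1}$, $c_i \mapsto \id - e_{i+1,i}$ (indices mod $k$) really does define a homomorphism $\overline{\H}_k(G,\phi) \to \SL_k(\Z) \ltimes \Z^k \subseteq \SL_{k+1}(\Z)$. This amounts to checking that for each $i$ the two matrices satisfy the Heisenberg relations $[x,[x,y]] = [y,[x,y]] = 1$, and that the amalgamation relations $b_i = [b_{i+1}, c_{i+1}]$ hold in $\SL_{k+1}(\Z)$; both reduce to routine elementary-matrix computations using $e_{a,b} e_{c,d} = \delta_{bc} e_{a,d}$, the commutators producing central elementary matrices in the last column. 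Since $\SL_{k+1}(\Z)$ is linear, hence residually finite and sofic, the image is a sofic quotient of $\overline{\H}_k(G,\phi)$. A separate calculation in the first factor shows that for $k \geq 2$ the composition $\Heis \cong \langle b_1, c_1 \rangle \to \SL_{k+1}(\Z)$ remains injective. Extending by $C_k$ through the cyclic action on indices (conjugation by a coordinate-permutation matrix in $GL_{k+1}(\Z)$) then produces a sofic quotient $Q$ of $\H_k(G,\phi) = \overline{\H}_k(G,\phi) \rtimes C_k$ into which $\Heis$ embeds; soficity of $Q$ is preserved by the sofic-by-finite closure property recalled in Section~\ref{soficity section}.

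Since $\Heis$ is amenable, I then invoke the stronger conclusion~\emph{(\ref{main-precise3})} of Theorem~\ref{main-precise}: for any finite $S \subseteq \Heis$ containing $b$, $c$, and $[b,c]$, and any $\varepsilon > 0$, every sufficiently good sofic approximation of $\Heis$ admits a permutation $f$ of order dividing $k$ with $d(\psi(g) \circ f, f \circ \psi(\phi(g))) < \varepsilon$ for the generator $g$ of $B$. Lemma~\ref{lm-soficapproximationHeis} supplies such approximations of the form $\psi_n$ on $\Sym(n^2)$ for all sufficiently large $n$; since each $\psi_n$ is a genuine homomorphism, the multiplicative part of the approximation condition is automatic, so only the lower bound $d(\psi_n(s), \id) > 1 - \delta$ for $s \in S' \setminus \{e\}$ needs verification, which Lemma~\ref{lm-soficapproximationHeis} provides.

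Finally, the displayed pair of coordinate equations follows from expanding the pointwise inequality $\psi_n(g) \circ f \approx f \circ \psi_n(\phi(g))$ on $(\Z/n\Z)^2$ with $f = (f_1, f_2)$, using the explicit formulas for $\psi_n(b)$ and $\psi_n(c)$ from the remark together with the commutator computation that gives $\psi_n([b,c]) : (x,y) \mapsto (x-1, y)$ as a unit shift along the first coordinate; the two coordinate identities are then forced on all but at most $\varepsilon n^2$ pairs. The main obstacle, modest as it is, is the linear-algebra bookkeeping in the first step: confirming that the assigned matrices really satisfy \emph{all} defining and amalgamation relations of $\overline{\H}_k(G,\phi)$, and that $\Heis \hookrightarrow \langle b_1, c_1 \rangle$ remains faithful for all $k \geq 2$. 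Once this is settled, the rest of the argument is a direct transcription of the proof of Theorem~\ref{th-functions-Heis}.
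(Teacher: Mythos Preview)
Your proposal is correct and follows essentially the same route as the paper. The paper's ``proof'' is contained entirely in the remark preceding the theorem: it records the homomorphism $\overline{\H}_k(G,\phi)\to\SL_{k+1}(\Z)$ via $b_i\mapsto\id+e_{i,k+1}$, $c_i\mapsto\id-e_{i+1,i}$, asserts that $\Heis$ injects for $k\ge 2$, notes that the image is linear hence sofic, recomputes $\psi_n$ in the new generators together with $\psi_n([b,c]):(x,y)\mapsto(x-1,y)$, and then simply says ``applying Theorem~\ref{main-precise} leads to'' the stated equations. Your plan reproduces each of these steps and adds the (appropriate) observation that Lemma~\ref{lm-soficapproximationHeis} still certifies the $\psi_n$ as sofic approximations after the relabelling, and that the passage from $\overline{\H}_k$ to $\H_k$ is handled by the sofic-by-finite closure property.
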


Despite their similarity, we do not see a way to derive  one of  Theorems~\ref{th-functions-Heis} and  \ref{th-functions-Heis2}  immediately from the other.  Defining $g(x,y): = f(y,x)$  transforms one set of recurrences to the other, but the condition that the function's order divides $k$ is lost.

\subsection{The Baumslag--Solitar group \texorpdfstring{$\BS(1,m)$}{BS(1,m)}}
\label{BS section}
This is the case addressed by Helfgott and Juschenko in~\cite{he-ju}.  Here we explain how it fits into our framework and give our own account of how it relates to recent work of Glebsky.

The Baumslag--Solitar group $\BS(1,m)$ has  presentation
$$
\BS(1,m)   \ = \  \langle a, b \mid a^b=a^m \rangle.
$$
It is a residually finite solvable group, and so is amenable.
If $m \not= \pm 1$, then the image of $a$ in any proper quotient of $\BS(1,m)$ is finite.
(Every element can be expressed as $b^{\mu} a^{\nu} b^{-\lambda}$ for some $\mu,   \lambda \geq 0$ and $\nu \in \Z$.  The result then follows from consequences of  a non-trivial $b^{\mu} a^{\nu} b^{-\lambda}$ mapping to the identity and the relation $a^b=a^m$.)

Identify  $\Sym(n)$ with $\Sym(\Z/n\Z)$.
For all $n \in \N$ relatively prime to $m$, define a map $\psi_n: \BS(1,m) \to \Sym(n)$  by
\begin{align*}
\psi_n(a) & : \    x \mapsto x+1 \text{, \  and} \\
\psi_n(b) & :   \  x \mapsto m^{-1}x,
\end{align*}
which extends to a homomorphism
defined on the whole of $\BS(1,m)$ since  $\psi_n(a)$ and $\psi_n(b)$ satisfy the defining relation of $\BS(1,m)$. This action of $\BS(1,m)$ arises from the quotient $\BS(1,m)_n := \BS(1,m)/ \langle a^n \rangle$ acting on cosets of the subgroup $\langle b \rangle$.
\begin{lemma}
\label{lm-soficapproximationBS}
For all finite sets $S \subseteq \BS(1,m)$ and all $\d >0$, there exists an integer $C$ such that for all $n > C$,
the map $\psi_{n}$ is an $(S,\d,n)$-approximation of $\BS(1,m)$, provided that $|m| \geq 2$.
\end{lemma}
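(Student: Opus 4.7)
The plan is to exploit the fact that $\psi_n$ is already a genuine group homomorphism (as noted in the setup preceding the lemma), which trivially satisfies the first clause in the definition of an $(S,\delta,n)$-approximation: $d(\psi_n(g)\psi_n(h),\psi_n(gh))=0$ for all $g,h \in S$. So the entire proof reduces to establishing the separation clause: $d(\psi_n(s), \id) > 1-\delta$ for every $s \in S \smallsetminus \set{e}$ once $n$ is large enough.

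The key computation to carry out is to identify, for each $s \in \BS(1,m)$, the affine map $\psi_n(s)$ explicitly. Every element of $\BS(1,m)$ may be written in the form $s = b^{j} a^{i} b^{-j'}$ with $i,j,j' \in \Z$ (this is essentially the HNN normal form, followed by conjugation to clear denominators). A short composition shows
$$
\psi_n(s):\ x \ \mapsto \ m^{\,j'-j}\, x \ + \ m^{-j}\, i \pmod n,
$$
which is a well-defined affine permutation because $\gcd(m,n)=1$. For the finitely many $s \in S$, the integers $i, j, j'$ are bounded, so one extracts two uniform constants: $I := \max_s |i_s|$ and $D := \max_s |j_s - j'_s|$.

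I would then split into cases. When $j=j'$, the map $\psi_n(s)$ is a pure translation by $m^{-j} i$; for $n > I$ and $s \neq e$ we have $i \not\equiv 0 \pmod n$, so $\psi_n(s)$ is fixed-point-free and $d(\psi_n(s),\id)=1$. When $j \neq j'$, the map has linear coefficient $m^{j'-j} \not\equiv 1 \pmod n$ provided $n > |m^D - 1|$, and then the fixed-point equation $(m^{j'-j}-1)x \equiv -m^{-j} i \pmod n$ has at most $\gcd(m^{j'-j}-1, n) \leq |m^D - 1|$ solutions. In either case the number of fixed points is at most $K := |m^D - 1|$, independent of $n$.

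Setting $C := \max\{I,\, |m^D-1|,\, K/\delta\}$ then gives $d(\psi_n(s),\id) \geq 1 - K/n > 1-\delta$ for every $s \in S \smallsetminus \set{e}$ and every $n > C$, completing the proof. The only mild subtlety is confirming that for each nonidentity $s$ the two obstructions (that $\psi_n(s)$ could accidentally equal the identity through collapses modulo $n$, or that the linear coefficient could equal $1$ modulo $n$) are ruled out by the lower bound $n > C$; the finiteness of $S$ is exactly what makes the constants $I$, $D$, $K$ uniform. The hypothesis $|m| \geq 2$ enters through $m^D - 1 \neq 0$ when $D \geq 1$, which is what prevents the linear coefficient case from degenerating.
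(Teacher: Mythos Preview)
Your proof is correct and follows essentially the same strategy as the paper's: both reduce to bounding fixed points of the affine map $\psi_n(s)$, since $\psi_n$ is a genuine homomorphism. The only cosmetic difference is that the paper parametrizes elements via the faithful $2\times 2$ matrix representation $a\mapsto\bigl[\begin{smallmatrix}1&1\\0&1\end{smallmatrix}\bigr]$, $b\mapsto\bigl[\begin{smallmatrix}1&0\\0&m\end{smallmatrix}\bigr]$, extracting the exponent $\mu_s$ and numerator $\lambda_s$, whereas you use a word normal form $b^{j}a^{i}b^{-j'}$; these amount to the same data ($\mu_s=j-j'$, $\lambda_s m^{-N}=i\,m^{-j'}$) and lead to the same affine formula and fixed-point bound. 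Your case split (pure translation versus nontrivial linear part) makes explicit what the paper compresses into the single line ``has at most $m^N$ fixed points.'' One small point worth tightening: when $j'-j<0$ you should first note $\gcd(m^{j'-j}-1,n)=\gcd(m^{|j'-j|}-1,n)$ (multiply by the unit $m^{|j'-j|}$) before invoking the bound by $|m^{D}-1|$.
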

\begin{proof}
The group $\BS(1,m)$ can be represented by $2 \by 2$ matrices via
$$
a \mapsto \begin{bmatrix}1 & 1 \\ 0 & 1 \end{bmatrix},
\qquad
b \mapsto \begin{bmatrix}1 & 0 \\ 0 & m \end{bmatrix}.
$$
The image of this embedding is
$$\set{ \left. \  \begin{bmatrix}1 & \lambda \\ 0 & m^{\mu} \end{bmatrix} \  \right|  \ \mu \in \Z, \ \lambda \in \Z\left[\frac{1}{m}\right] \  }.$$
For every finite set $S \subseteq \BS(1,m)$ there exists a positive integer $N$ such that every $s\in S$ is sent to
$$
\begin{bmatrix}1 & \lambda_s m^{-N} \\ 0 & m^{\mu_s} \end{bmatrix}
$$
where $\lambda_s$ and $\mu_s$ are integers such that $|\mu_s| \leq N$ and $|\lambda_s| \leq \left| m^{2N} \right|$.
One computes that
$$
\psi_n(s): \left(x \mapsto m^{-\mu_s}(x + \lambda_s m^{-N}) \right).
$$
If this permutation is non-trivial (i.e., $\mu_s \not= 0$ or $n \nmid \lambda_s$), then it has at most $m^N$ fixed points. Therefore if
$n > C := \max\left\{\left| \frac{m^{N}}{\d} \right| , \left| m^{2N} \right|  \right\}$, then $\psi_{n}$ is an $(S,\d,n)$-approximation.
\end{proof}

Theorem~\ref{main-precise}  now tells us that for all $k \in \N$ the group $\H_k(\BS(1,m))$ has a   sofic quotient into which $\BS(1,m)$ naturally embeds if and only there exist infinitely many  $n$  and functions $f: \Z/n\Z \to \Z/n\Z$ of order dividing $k$ which conjugate  addition  to multiplication by $m$ up to error $\ep$---that is,
 $m^{-1} f(x) =  f(x+ 1)$
for   at least $(1-\ep)n$ values of $x \in  \Z/n\Z$.    Define $\tilde f \in \Sym(\Z / n \Z)$ by $\tilde f(x) =- f(-x)$, which  has order dividing $k$ if and only if $f$ does.   The  equation $m^{-1} f(x) =  f(x+ 1)$ can be re-expressed as   $f(x) = m  f(x+1)$,  and then as
$$
\tilde f (x+1) = - f (-x-1) = - m f (-x) = m \tilde f (x).
$$
So   $m^{-1} f(x) =  f(x+ 1)$ is satisfied by at least $(1-\ep)n$ values of $x \in  \Z/n\Z$ if and only if the same is true of $\tilde f(x+1) = m \tilde f(x)$.    Theorem~\ref{HJ thm} then follows, or, in more detail, we have:

\begin{theorem}[Helfgott--Juschenko~\cite{he-ju}]
\label{th-functions-BS}
The group $\H_k(\BS(1,m))$ has a  sofic quotient $Q$ such that the composition $\BS(1,m) \to \H_k(\BS(1,m)) \to Q$ is injective if and only if
for all $\ep >0$ there exists an integer $C$ such that for all $n>C$ coprime to $m$, there exist a permutation $f \in \Sym(\Z / n \Z)$ of order dividing $k$
such that
$
f(x+1) = m f(x)
$
for at least $(1-\ep)n$ values of $x$.
\end{theorem}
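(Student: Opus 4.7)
The plan is to derive Theorem~\ref{th-functions-BS} as a corollary of Theorem~\ref{main-precise} combined with the explicit sofic approximations $\psi_n$ of Lemma~\ref{lm-soficapproximationBS}. In the present notation $G=\BS(1,m)$, $A=\langle a\rangle$, $B=\langle b\rangle$, and $\phi:b\mapsto a$; since $\BS(1,m)$ is residually finite solvable, it is amenable, so all three conditions of Theorem~\ref{main-precise} are equivalent.

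For the forward implication, assume that $\H_k(\BS(1,m))$ has a sofic quotient into which $\BS(1,m)$ embeds, and fix $\ep>0$. I would apply condition~(\ref{main-precise3}) of Theorem~\ref{main-precise} with $S=\{a,b\}$ to obtain a finite $S'\supseteq S$, a $\d>0$, and an integer $N$ such that every $(S',\d,n)$-approximation of $\BS(1,m)$ with $n\ge N$ admits an order-dividing-$k$ permutation $g$ satisfying $d(\psi(b)\circ g,\,g\circ\psi(a))<\ep$. Lemma~\ref{lm-soficapproximationBS} supplies a threshold $C_0$ so that for every $n>C_0$ coprime to $m$ the map $\psi_n$ is an $(S',\d,n)$-approximation, and then taking $C=\max(C_0,N)$ yields such a $g\in\Sym(\Z/n\Z)$ for every $n>C$ coprime to $m$. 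Unpacking $d(\psi_n(b)\circ g,\,g\circ\psi_n(a))<\ep$ with $\psi_n(a):x\mapsto x+1$ and $\psi_n(b):x\mapsto m^{-1}x$ says exactly that $m^{-1}g(x)=g(x+1)$ for at least $(1-\ep)n$ values of $x$.

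Next I would convert this to the asked-for form by conjugating with the involution $s:x\mapsto -x$ of $\Sym(\Z/n\Z)$: set $f(x):=-g(-x)$. Then $f=s^{-1}gs$, so the order of $f$ divides $k$. Substituting $x\mapsto -x-1$ in $m^{-1}g(x)=g(x+1)$ gives $g(-x-1)=mg(-x)$, and multiplying by $-1$ yields $f(x+1)=mf(x)$; since $x\mapsto -x-1$ is a bijection of $\Z/n\Z$, this equation holds on a subset of size at least $(1-\ep)n$.

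The reverse direction is immediate by inverting this coordinate change: given the $f$ from the hypothesis, set $g(x):=-f(-x)$, which has order dividing $k$ and satisfies $d(\psi_n(b)\circ g,\,g\circ\psi_n(a))<\ep$. Combining with Lemma~\ref{lm-soficapproximationBS} to produce $(S,\d,n)$-approximations for arbitrary finite $S\subseteq\BS(1,m)$ and $\d>0$ (by taking $n$ large and coprime to $m$) establishes condition~(\ref{main-precise2}) of Theorem~\ref{main-precise}, which in turn gives condition~(\ref{main-precise1}). The substantive work is therefore already done inside Theorem~\ref{main-precise} (which itself rests on Theorem~\ref{amenable-conjugate}); the only step specific to $\BS(1,m)$ is the coordinate change $x\mapsto -x$, which converts the symmetric formulation $m^{-1}g(x)=g(x+1)$ naturally produced by the almost-conjugation inequality into the asymmetric recurrence $f(x+1)=mf(x)$. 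I expect no serious obstacle beyond bookkeeping once Theorem~\ref{main-precise} and Lemma~\ref{lm-soficapproximationBS} are in hand.
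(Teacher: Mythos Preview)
Your proposal is correct and follows essentially the same route as the paper: apply Theorem~\ref{main-precise} (using amenability of $\BS(1,m)$ for the equivalence with condition~(\ref{main-precise3})), feed in the explicit approximations $\psi_n$ from Lemma~\ref{lm-soficapproximationBS}, and then convert $m^{-1}g(x)=g(x+1)$ into $f(x+1)=mf(x)$ via the conjugation $f(x)=-g(-x)$. The paper's argument is the same, presented as a short discussion rather than a formal proof.
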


We do not know whether  $\H_k(\BS(1,m))$ is sofic for $k\geq 4$.
Both $\H_k(\BS(1,m))$ and $\overline \H_k(\BS(1,m))$ are finite for $k \leq 3$ (assuming $m \not = \pm 1$),
and so cannot have a   sofic quotient into which $\BS(1,m)$ injects.
A beautiful argument due to Higman~\cite{hi} shows that $\overline\H_k(\BS(1,2))$  has no finite quotients.
Glebsky~\cite{gl,gl1} shows that, by contrast, if $k\geq 4$ and $p$ is a  prime dividing $m-1$, then the groups $\overline\H_k(\BS(1,m))$   have many quotients which are finite $p$-groups.   His main theorem in~\cite{gl} amounts to the following.  His proof is more combinatorial than the one we give below via  Golod--Shafarevich machinery.
\begin{theorem}[Glebsky~\cite{gl}]
\label{glebsky thm}
Suppose $k\geq 4$ and $p$ is a prime dividing $m-1$. Then the pro-$p$ completion of $\overline\H_k(\BS(1,m))$ is   infinite.
If, moreover, $k$ is even and $m  \not = \pm 1$, then $\BS(1,m)$ embeds into this pro-$p$ completion.
\end{theorem}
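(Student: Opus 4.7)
The plan is to apply the weighted Golod--Shafarevich inequality to the pro-$p$ completion $\widehat{G}_p$ of $G := \overline{\H}_k(\BS(1,m))$.

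After identifying $b_i \in G_i$ with $a_{i+1} \in G_{i+1}$ (indices mod $k$), $G$ is presented on $k$ generators $a_1, \ldots, a_k$ with $k$ defining relators
$$
r_i \ := \ [a_i, a_{i+1}] \cdot a_i^{-(m-1)}, \qquad i = 1, \ldots, k.
$$
Write $m - 1 = p^e u$ with $\gcd(u, p) = 1$ and $e \geq 1$. In the Zassenhaus filtration of the free pro-$p$ group $F$ on $a_1, \ldots, a_k$, the commutator $[a_i, a_{i+1}]$ has weight $2$ while $a_i^{-(m-1)}$ has weight $p^e \geq 2$; each $r_i$ therefore has weight at least $2$, with leading term $[x_i, x_{i+1}]$ in the degree-$2$ part of the associated graded $\operatorname{gr}(F)$ (augmented by $-u\,x_i^2$ when $p^e = 2$). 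A direct check shows these $k$ leading terms are $\F_p$-linearly independent. The weighted Golod--Shafarevich inequality then guarantees that $\widehat{G}_p$ is infinite whenever the test series $H(t) := 1 - kt + kt^2$ attains a non-positive value on $(0, 1)$; at $t = 1/2$ we have $H(1/2) = 1 - k/4 \leq 0$ exactly when $k \geq 4$, which establishes the infinitude claim.

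For the embedding $\BS(1,m) \hookrightarrow \widehat{G}_p$ when $k$ is even and $m \neq \pm 1$: by Lemma~\ref{injective}, since $\langle a \rangle \cap \langle b \rangle = \{1\}$ in $\BS(1,m)$, the natural map $\BS(1,m) \to G$ is already injective, identifying $\BS(1,m)$ with the subgroup $G_1$. It therefore suffices to show that every nontrivial $g \in G_1$ survives in some finite $p$-quotient of $G$. Since $p \mid m-1$, the group $\BS(1,m)$ is itself residually-$p$, so $g$ has nontrivial image in some finite $p$-quotient $Q$ of $\BS(1,m)$, which (enlarging $Q$ if needed) may be chosen so that the images $\overline A, \overline B \leq Q$ of $\langle a \rangle, \langle b \rangle$ are cyclic $p$-groups of the same order. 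I would then build a finite $p$-quotient of $G$ by cyclically gluing $k$ copies of $Q$ along $\overline B_i = \overline A_{i+1}$, and check that $Q_1$ remains embedded. Evenness of $k$ supplies the symmetry needed to pair opposite copies $Q_i \leftrightarrow Q_{i+k/2}$ so that the cyclic composition of the identifications closes up consistently, while $m \neq \pm 1$ ensures $Q$ is nontrivial in the required way.

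The main obstacle is this embedding step: one must construct a finite $p$-quotient of $G$ in which $G_1$ actually injects, which demands careful choice of the cyclic amalgamation and is where the parity hypothesis on $k$ becomes essential. The infinitude half, by contrast, reduces cleanly to the standard weighted Golod--Shafarevich estimate once the weight $2$ of each relator is verified from $p \mid m-1$.
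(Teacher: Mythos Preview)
Your infinitude argument is correct and essentially matches the paper's: both reduce to Golod--Shafarevich. The paper observes that since $p \mid m-1$, each relator $[a_i,a_{i+1}]a_i^{-(m-1)}$ lies in the Frattini subgroup of the free pro-$p$ group, so the pro-$p$ presentation is minimal with $k$ generators and $k$ relations, and the classical inequality $k \leq k^2/4$ (valid for $k \geq 4$) suffices. Your weighted formulation with $H(t)=1-kt+kt^2$ evaluated at $t=1/2$ amounts to the same thing; the linear-independence check on leading terms is unnecessary for the conclusion.

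Your embedding argument, however, has a genuine gap, and the paper takes a different route. The difficulty is exactly where you locate it: there is no general mechanism by which a ``cyclic gluing'' of $k$ copies of a finite $p$-quotient $Q$ of $\BS(1,m)$ produces a finite $p$-quotient of $G$ into which $Q_1$ still embeds. Cyclic amalgamations of finite groups are precisely where the Higman phenomenon lives---they tend to collapse---and your appeal to evenness via ``pairing opposite copies'' is not concrete enough to rule this out. The paper avoids constructing finite quotients altogether. It invokes the fact that in every \emph{proper} quotient of $\BS(1,m)$ (with $m\neq\pm1$) the image of $a$ has finite order, so showing $\BS(1,m)\hookrightarrow\widehat{G}_p$ reduces to showing that some generator $a_i$ has infinite order in $\widehat{G}_p$. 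For $k=4$ the paper then exhibits a decomposition of $\widehat{G}_p$ reminiscent of $F_2\times F_2$: the subgroups $\Gamma_1=\langle a_1,a_3\rangle$ and $\Gamma_2=\langle a_2,a_4\rangle$ are each free pro-$p$, and every element of $\widehat{G}_p$ factors uniquely as a product of one element from each, giving infinite order for each $a_i$. For even $k>4$ a suitable quotient of $\widehat{G}_p$ retains this odd/even structure, and that is where the parity hypothesis is actually used.
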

\begin{proof}
The defining relator of $\BS(1,m)$ can be written in the from $[a,b]^{-1} a^{m-1}$
and lies in the $p$-Frattini subgroup of the free group. This implies that the pro-$p$ completion $\hat G$ of $\overline\H_k(\BS(1,m))$ has a minimal pro-$p$ presentation with
$k$ generators and $k$ relations. Such a presentation satisfies the Golod--Shafarevich condition (since $k \leq k^2/4$) and therefore
it defines an infinite pro-$p$ group  (see, for example, \cite{e}).

Since $a$ has finite order in any proper quotient of $\BS(1,m)$, to prove that $\BS(1,m)$ embeds into this pro-$p$ completion, it suffices to show that the images of the generators
$a_1, \ldots, a_k$ of $\overline\H_k(\BS(1,m))$ have infinite order---indeed, that and one of them has infinite order.
In the case $k=4$, the defining relations of $\hat{G}$ are similar to the relations of $F_2 \times F_2$, where the first copy of the free group $F_2$ is generated by
$a_1$ and $a_3$ and the second copy is generated by $a_2$ and $a_4$. It can be shown that   $\hat{G}$ contains the free pro-$p$ groups $\Gamma_1$ and $\Gamma_2$ generated by $\Gamma_1=\langle a_1,a_3\rangle$ and $\Gamma_2=\langle a_2,a_4\rangle$, and  moreover that any element in $\hat G$ can be written uniquely as a product of two elements, one from $\Gamma_1$ and one from $\Gamma_2$. This shows that the order of $a_i$ in $\hat G $ is infinite and that $\BS(1,m)$ embeds in $\hat G$.
(When $k >4$ the group $\hat G$ does not have such nice combinatorial description, but there is a quotient of $\hat G$, which has similar structure, provided that $k$ is even.)
\end{proof}

Theorems~\ref{th-functions-BS} and~\ref{glebsky thm} together imply Theorem~\ref{GS version}:
if $|m|>2$ and   $\ep >0$, then there exists $C$ such that for all $n>C$ coprime to $m$, there are permutations $g \in \Sym(\Z /n\Z)$ with $g^4 = \id$ and with $g(x+1) = m g(x)$ for at least $(1-\ep)n$ values of $x \in \Z /n\Z$.

\begin{remark}
Theorem~\ref{GS version} applies to  all   integers $m \neq 0, 2$, these being the cases where $(m-1) \neq \pm 1 $ and has no prime divisors are $m=0,2$.
The above proof via Theorem~\ref{glebsky thm} works for $m\not=-1,0,1,2$.
For  $m=1$, when $\BS(1,1)=\Z \times \Z$, the analogue is a special case of Theorem~\ref{th-functions-ZbyZ}.
The case $m=-1$ only requires a minor strengthening of Theorem~\ref{glebsky thm}.  The case $m=0$ is degenerate since $a$ and $b$ have different orders.
Also one can  have the order of $g$ divide any given even integer $k \geq 4$, not just $4$.
We stress that the analogue of Theorem~\ref{GS version} is unknown when $m=2$.
\end{remark}
\begin{remark} \label{dependance3}
Estimating the dependance of the constant $C$ in Theorem~\ref{GS version} is quite hard because it involves explicitly constructing the sets $S'$ in Theorem~\ref{amenable-conjugate}. We believe that by carefully tracking all bounds one gets that $C = O\left(2^{K \ep^{-2}}\right)$.
\end{remark}

\subsection{\texorpdfstring{$\Z \wr \Z$}{Z wr Z}}
\label{ZwrZ section}

The wreath product $\Z \wr \Z$  has  presentation
$$
\Z \wr \Z = \left\langle a, b \  \left| \  \left[a,a^{b^i}\right] = 1 \ \forall i \in \N \right. \right\rangle.
$$
It is a residually finite solvable group, and so  is amenable.
For any $n \in \N$ and any $m$  coprime to $n$, define a homomorphism $\psi_{n,m}: \Z \wr \Z \to \Sym(\Z/n\Z)$ by
\begin{align*}
\psi_{n,m}(a) &= \left(x \mapsto x+1\right) \text{, and} \\
\psi_{n,m}(b) &= \left(x \mapsto m^{-1}x\right),
\end{align*}
which is well-defined since the permutations $\psi_{n,m}(a)$ and $\psi_{n,m}(b)$ satisfy the defining relations of $\Z \wr \Z$. This action of $\Z \wr \Z$ arises from the quotient $(\Z \wr \Z)_{n,m} = (\Z \wr \Z) / \langle a^n=1, a^b=a^m \rangle$ acting on  cosets of the subgroup $\langle b \rangle$.
\begin{lemma}
\label{lm-soficapproximationZwrZ}
For all finite sets $S \subseteq \Z \wr \Z$ and all $\d >0$, there exists $C>0$ with the property that
$\psi_{n,m}$ is an $(S,\d,n)$-approximation of $\Z \wr \Z$ for all coprime $n,m$    satisfying $n \nmid t(m)$  for every nonzero polynomial $t(x) = \sum t_i x^i \in \Z[x]$ whose degree is most $C$ and whose coefficients all satisfy $|t_i| < C$.

If, moreover,  $ |m| > 2C+1$ and $n >  |m|^{C +1}$, then all such polynomials satisfy $n \nmid t(m)$, and so $\psi_{n,m}$   is necessarily an $(S,\d,n)$-approximation of $\Z \wr \Z$.
\end{lemma}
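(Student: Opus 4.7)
The plan is to follow the template of the earlier lemmas in this section. Since $\psi_{n,m}$ is by construction a homomorphism (the permutations $\psi_{n,m}(a)$ and $\psi_{n,m}(b)$ satisfy the defining relations of $\Z \wr \Z$), the multiplicativity condition in the definition of an $(S,\delta,n)$-approximation is automatic, and the task reduces to showing that for every $s \in S \ssm \{e\}$ the permutation $\psi_{n,m}(s)$ has fewer than $\delta n$ fixed points. Every element of $\Z \wr \Z$ can be written uniquely as $s = b^\mu \prod_i (a^{b^i})^{\nu_i}$ using the semidirect product decomposition $\langle b \rangle \ltimes \bigoplus_{i\in\Z} \langle a^{b^i}\rangle$, and unwinding the definitions (using $\psi_{n,m}(a^{b^i})(x) = x + m^i$) gives
$$
\psi_{n,m}(s)(x) \ = \ m^{-\mu} x \ + \ m^{-\mu} \sum_i \nu_i m^i.
$$
I would first pick $N$ large enough that every $s \in S$ satisfies $|\mu| \leq N$, $\nu_i = 0$ for $|i|>N$, and $|\nu_i| \leq N$, and then set $C := \max(2N+1, \lceil 2/\delta\rceil + 1)$.

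The verification then splits into two cases. When $\mu = 0$ the permutation is the translation $x \mapsto x + c$ with $c := \sum_i \nu_i m^i$, and multiplying $c$ by $m^N$ converts it into a genuine polynomial $T(m) := \sum_i \nu_i m^{i+N}$ of degree at most $2N < C$ with integer coefficients bounded by $N < C$; since $s \neq e$ forces $T$ to be nonzero as a polynomial, the hypothesis gives $n \nmid T(m) = m^N c$, and coprimality of $m$ and $n$ yields $c \not\equiv 0 \pmod n$, so this translation has no fixed points at all. When $\mu \neq 0$ the fixed-point equation collapses to $(m^\mu - 1)x \equiv c \pmod n$, which has at most $d := \gcd(m^\mu - 1, n)$ solutions; the main step is to force $d \leq \delta n/2$. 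I would argue by contradiction: if $d > \delta n/2$, then $d \mid n$ provides a positive-integer cofactor $e := n/d < 2/\delta < C$, and the nonzero polynomial
$$
t(x) \ := \ e\,(x^{|\mu|} - 1)
$$
has degree $|\mu| \leq N < C$ and coefficients $\pm e$ of absolute value strictly less than $C$, yet $t(m) = e(m^{|\mu|}-1) = e \cdot q d = qn$ where $q := (m^{|\mu|}-1)/d \in \Z$, so $n \mid t(m)$, contradicting the polynomial hypothesis. (Here I use $\gcd(m^\mu-1, n) = \gcd(m^{|\mu|}-1, n)$, which handles both signs of $\mu$ uniformly since $m$ is a unit in $\Z/n\Z$.)

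The final sentence of the lemma follows from a direct geometric-series estimate: for any nonzero polynomial $t$ of degree $d \leq C$ with $|t_i| < C$,
$$
|t(m)| \ \geq \ |m|^d \, - \, C \cdot \tfrac{|m|^d - 1}{|m|-1} \ > \ \tfrac12 |m|^d \ > \ 0
$$
whenever $|m| > 2C+1$ (so that $C/(|m|-1) < 1/2$), while the symmetric upper estimate gives $|t(m)| < \tfrac12 |m|^{C+1}$, which sits below $n$ under the assumption $n > |m|^{C+1}$. Hence $0 < |t(m)| < n$, forcing $n \nmid t(m)$. The main obstacle is identifying the slightly slick polynomial $e(x^{|\mu|} - 1)$ in the $\mu \neq 0$ case: the naive polynomial $x^{|\mu|} - 1$ only rules out $d = n$, and multiplying by the cofactor $e = n/d$ is what converts a large-gcd scenario into the literal divisibility $n \mid t(m)$ that the polynomial hypothesis forbids.
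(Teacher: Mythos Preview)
Your proof is correct and follows essentially the same route as the paper's. The paper packages the normal form via the $2\times 2$ matrix representation of $\Z\wr\Z$ over $\Z[x,x^{-1}]$ rather than the semidirect-product coordinates $b^\mu\prod_i (a^{b^i})^{\nu_i}$, but this yields the same formula for $\psi_{n,m}(s)$; the key polynomial $e(x^{|\mu|}-1)$ in your $\mu\neq 0$ case is exactly the paper's $M(x^{|\mu|}-1)$ with $M=n/\gcd$, and the geometric-series bounds for the final sentence coincide. The only cosmetic differences are that the paper uses $\lfloor 1/\delta\rfloor+1$ where you use $\lceil 2/\delta\rceil+1$, and the paper redundantly treats the subcase $n\mid m^{\mu}-1$ separately (your argument already excludes it via the polynomial $x^{|\mu|}-1$).
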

\begin{proof}
The group $\Z \wr \Z$ can be represented by the group of  matrices
$$\left\{  \left. \  \begin{bmatrix}1 & \tilde t(x) \\ 0 & x^k \end{bmatrix} \  \right| \  k \in \Z, \ \  \tilde t(x) \in \Z[x,x^{-1}] \  \right\}$$
via
$$
a \mapsto \begin{bmatrix}1 & 1 \\ 0 & 1 \end{bmatrix},
\qquad
b \mapsto \begin{bmatrix}1 & 0 \\ 0 & x^{-1} \end{bmatrix}.
$$
Suppose $S$ is a finite subset of $\Z \wr \Z$.     Then   there exists an integer $N$ with the following property.
Every $s \in S$  can be represented by $$
\begin{bmatrix}1 & \tilde t_{(s)}(x) \\ 0 & x^{\mu_{(s)}} \end{bmatrix}
$$
where
$$
\tilde t_{(s)}(x) \  = \  \sum_{i=-N}^N \tilde t_i x^i  \ \in \  \Z[x,x^{-1}]
$$
and  $|\mu_{(s)}| \leq N$ and  $|\tilde t_i| \leq N$ for all $i$. One computes that for $s$ as above,
$$
\psi_{n,m}(s): \left( x \mapsto m^{-\mu_{(s)}}\left(x + \tilde t_{(s)}(m) \rule[2ex]{0pt}{0pt}\right) \rule[3ex]{0pt}{0pt} \right).
$$

Define $t_{(s)}(x) :=  x^{N} \tilde t_{(s)}(x) \in \Z[x]$.  Let $C = \max \left\{ 2N+1, \left\lfloor\frac{1}{\d} +1 \right\rfloor \right\}$, so that  $t_{(s)}(x)$ is within the scope of the lemma.
 Assume that $n,m$ satisfy the condition in the first part of the lemma, so that, in particular, $n \nmid t_{(s)}(m)$.

If $\mu_{(s)} =0$,  then  $\psi_{n,m}(s)$ maps $x$ to $x + \tilde t_{(s)}(m)   =  x + m^{-N} t_{(s)}(m)$ and so has no fixed points as $m^{-1}$ is coprime to $n$ and $n \nmid t_{(s)}(m)$.

Suppose $\mu_{(s)} \not =0$. If $n | (m^{\mu_{(s)}} -1)$, then $\psi_{n,m}(s):  x \mapsto x + \tilde t_{(s)}(m)$ and so is either the identity (when $n \mid \tilde t_{(s)}(m)$, which would contradict  $n \nmid t_{(s)}(m)$, and so does not occur) or has no fixed points.  If $n \nmid (m^{\mu_{(s)}} -1)$, then the permutation $\psi_{n,m}(s)$ has at most $\gcd(m^{\mu_{(s)}} -1,n)$ fixed points.  But $\gcd(m^{\mu_{(s)}} -1, n) \leq \d n$, else the polynomial $M(x^{|\mu_{(s)}|} - 1)$ will have $m$ as a root mod $n$ for some $M < 1/\d$.  So $\psi_{n,m}(s)$ has at most $\d n$  fixed points.

In every case  we have $d(\psi_{n,m}(s),\id) > 1 -\d$. And the \emph{almost homomorphism} condition is immediate since $\psi_{n,m}$ is, in fact, a homomorphism.  So  $\psi_{n,m}$
is an $(S,\d,n)$-approximation of $\Z \wr \Z$, as required.

For the final part of the lemma, assume  $|m| > 2C+1$ and $n >   |m|^{C +1}$.  In particular, $\abs{m} \neq 1$.
Let $d \leq C$ be the degree of $t(x)$.
 Then  $$\abs{t(m)} \ \leq \  \sum_{i=0}^d \abs{t_i} \abs{m}^i  \ \leq \  C \frac{\abs{m}^{d+1} -1}{ \abs{m} -1}  \ < \   |m|^{C +1} \ < \ n.$$
If $d=0$, then $0 \neq \abs{t(m)} = \abs{t_0} < C < n$, so   $n \nmid t(m)$, as required.
Assume $d \geq 1$.  Then
$$
\abs{t(m)}  \ = \  \abs{t_d m^d  +   \sum_{i=0}^{d-1} t_i m^i   }  \ \geq \  \abs{t_d m^d} - \abs{ \sum_{i=0}^{d-1} t_i m^i}
 \ \geq \  \abs{m}^d - C \frac{\abs{m}^d -1}{\abs{m} -1}  \ \geq \ |m|^d /2,
$$
where the final inequality holds because $\abs{m} > 2C+1$.
As $d \geq 1$, we now have that  $0 < |m|/2  \ \leq \  |t(m)| < \ n$, which implies that $n \nmid t(m)$.
\end{proof}

So Theorem~\ref{main-precise}  tells us that for all $k \in \N$ the group $\H_k(\Z \wr \Z)$ has a sofic quotient into which $\Z \wr \Z$ naturally embeds if and only there exist infinitely many
$n$  and functions $f: \Z/n\Z \to \Z/n\Z$ of order dividing $k$ which conjugate addition of $1$ to multiplication by $m$ up to error $\ep$. After conjugating by a minus sign (in the manner of replacing $f$ by $\tilde f$ in Section~\ref{BS section}) this latter condition becomes $f(x+1) = m f(x)$ for at least $(1-\ep)n$ values of $x \in \Z /n\Z$.  However, for $k\geq 4$ we know  by Lemma~\ref{injective}  that  $\Z \wr \Z$ naturally embeds in $\H_k(\Z\wr \Z)$ and  by Corollary~\ref{so sofic cor} that $\H_k(\Z\wr \Z)$  is sofic. We also know by  Proposition~\ref{soficquotient}  that $\H_3(\Z\wr \Z)$ has a sofic quotient into which $\Z \wr \Z$  naturally embeds.
So we have that such $n$ and $f$ do exist for $k \geq 3$.  Adjusting the constant $C$ of Lemma~\ref{lm-soficapproximationZwrZ} suitably, we have:

\begin{theorem}
\label{th-functions-ZwrZ}
For all $\ep >0$ and $k \geq 3$, there exists $C$ such that  if $n$ is coprime to $m$ and $|m| > C$ and $n > |m|^C$,
then there exists $f  \in \Sym(\Z /n\Z)$ which has order dividing $k$ and the property that $f(x+1) = m f(x)$ for at least $(1-\ep)n$ values of $x \in \Z /n\Z$.
\end{theorem}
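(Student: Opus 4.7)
The plan is to apply Theorem~\ref{main-precise} to $G = \Z \wr \Z$ with $A = \langle a \rangle$, $B = \langle b \rangle$, and $\phi: b \mapsto a$.  Since $\Z \wr \Z$ is amenable, conditions~\emph{(\ref{main-precise1})}--\emph{(\ref{main-precise3})} of that theorem are equivalent, and I will feed the sofic approximations $\psi_{n,m}$ supplied by Lemma~\ref{lm-soficapproximationZwrZ} into condition~\emph{(\ref{main-precise3})}.

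The first step is to verify condition~\emph{(\ref{main-precise1})} for $k \geq 3$: that $\H_k(\Z \wr \Z)$ admits a sofic quotient $Q$ such that the natural map $\Z \wr \Z \to \H_k(\Z \wr \Z) \to Q$ is injective.  For $k \geq 4$, Corollary~\ref{so sofic cor} gives that $\overline{\H}_k(\Z \wr \Z)$ is residually solvable, hence sofic, and since it has index $k$ in $\H_k(\Z \wr \Z)$ the latter is sofic as well; meanwhile $\langle a \rangle \cap \langle b \rangle = \set{1}$ inside $\Z \wr \Z$, so Lemma~\ref{injective} yields the required embedding and one can take $Q = \H_k(\Z \wr \Z)$.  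For $k = 3$ I would instead invoke Proposition~\ref{soficquotient} to obtain a homomorphism $\H_3(\Z \wr \Z) \to C_3 \ltimes (\Z \wr \Z)^3$ whose restriction to each copy of $\Z \wr \Z$ is injective; its image is a sofic quotient because $\Z \wr \Z$ is sofic and soficity is preserved under finite direct products and finite-by-sofic extensions.

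With condition~\emph{(\ref{main-precise1})} in hand, I would apply condition~\emph{(\ref{main-precise3})} to $S = \set{a,b}$ and the given $\ep > 0$, which outputs a finite set $S' \supseteq S$, a tolerance $\d > 0$, and an integer $N$ such that every $(S',\d,n)$-approximation of $\Z \wr \Z$ with $n > N$ admits some $f \in \Sym(\Z/n\Z)$ of order dividing $k$ with $d(\psi(b) \circ f,\, f \circ \psi(a)) < \ep$.  Lemma~\ref{lm-soficapproximationZwrZ} then supplies a constant $C_1 = C_1(S',\d)$ such that $\psi_{n,m}$ is an $(S',\d,n)$-approximation whenever $\gcd(m,n)=1$, $|m| > 2C_1 + 1$, and $n > |m|^{C_1+1}$.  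Choosing the $C$ of the theorem large enough that $|m| > C$ forces $|m| > 2C_1+1$ and that $n > |m|^C$ forces both $n > |m|^{C_1+1}$ and $n > N$ yields the constant claimed in the statement; this is the main, but routine, bookkeeping step.

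Finally, the inequality $d(\psi_{n,m}(b) \circ f,\, f \circ \psi_{n,m}(a)) < \ep$ unpacks, via the explicit formulas $\psi_{n,m}(a):x \mapsto x+1$ and $\psi_{n,m}(b):x \mapsto m^{-1}x$, to the condition that $f(x+1) = m^{-1} f(x)$ holds for a set $T \subseteq \Z/n\Z$ with $|T| \geq (1-\ep)n$.  Following the sign-conjugation trick from Section~\ref{BS section}, I would set $\tilde f(x) := -f(-x)$: this permutation has the same cycle structure (hence the same order) as $f$, and a direct computation shows that $\tilde f(x+1) = m\tilde f(x)$ is equivalent to $f((-x{-}1)+1) = m^{-1} f(-x{-}1)$, i.e.\ to $-x-1 \in T$.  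Since $x \mapsto -x-1$ is a bijection of $\Z/n\Z$, the new recurrence holds on a set of size $|T| \geq (1-\ep)n$, giving $\tilde f$ with the required properties.  The conceptual work is absorbed by Theorem~\ref{main-precise}, Lemma~\ref{lm-soficapproximationZwrZ}, Corollary~\ref{so sofic cor}, and Proposition~\ref{soficquotient}, so the only genuine obstacle is the constant juggling in the previous paragraph.
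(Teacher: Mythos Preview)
Your proposal is correct and follows essentially the same approach as the paper: verify condition~\emph{(\ref{main-precise1})} via Corollary~\ref{so sofic cor} and Lemma~\ref{injective} for $k\ge 4$ and via Proposition~\ref{soficquotient} for $k=3$, feed the approximations $\psi_{n,m}$ from Lemma~\ref{lm-soficapproximationZwrZ} into condition~\emph{(\ref{main-precise3})}, and then use the sign-conjugation trick from Section~\ref{BS section}. The paper's own proof is exactly this, with the constant bookkeeping compressed to a single phrase (``adjusting the constant $C$ of Lemma~\ref{lm-soficapproximationZwrZ} suitably''), which you have correctly unpacked.
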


 This result is stronger but less clean than the version we preferred to present  in the introduction as Theorem~\ref{heuristicapplication}.

\begin{proof}[Proof of Theorem~\ref{heuristicapplication}]
Theorem~\ref{heuristicapplication}  states that for all $\varepsilon >0$ and all $k\geq 3$, there exists $N \in \N$  such that for all coprime integers $m$ and $n$ with $n > N$ and  $\ln \ln n < m < \ln n$,
there exists an $f$ as per Theorem~\ref{th-functions-ZwrZ}.   This follows from Theorem~\ref{th-functions-ZwrZ} by taking $N$ sufficiently large that $\ln \ln N >C$ and
$N >   (\ln N)^C$.
\end{proof}

As mentioned above, $\H_k(\Z\wr \Z)$  is sofic for all $k \geq 4$ by Corollary~\ref{so sofic cor}.
For $k=1$ and $k=2$ the groups $\H_k(\Z\wr \Z)$ are sofic since
$$
\overline{\H}_1(\Z\wr \Z)  \ = \  \H_1(\Z\wr \Z)  \ \cong \  \Z
\quad \text{ and  } \quad
\overline{\H}_2(\Z\wr \Z)  \ \cong \  \Heis
$$
(but $\Z \wr \Z$ does not embed in $\H_1(\Z\wr \Z)$ or $\H_2(\Z\wr \Z)$).
As in the case of $\Heis$, we do not know whether $\H_3(\Z\wr \Z)$ is sofic, but we see no  reason  it should not be.

\begin{remark}
Before proving Theorem~\ref{ressolvable}, which implies that  $\H_4(\Z\wr\Z)$ is sofic, we constructed   finite quotients    which can be combined to give
a residually finite quotient $Q$ of $\H_4(\Z\wr\Z)$ in which $\Z \wr \Z$ embeds. We find these quotients  interesting on their own, and will briefly describe them.
Pick a prime $p$ and two functions $f, \lambda: \F_p \to \F_p^*$.
Then there is an action $\psi_{p,f,\lambda}$  of $\H_4(\Z\wr\Z)$ on $S = \F_p \by \F_p \by \F_p \by \F_p$ defined by
$$t : (x,y,z,w) \mapsto (y,z,w,x), \qquad  a: (x,y,z,w) \mapsto (x\lambda(z),y,z,w+f(z)).
$$
One computes that
$$
a^t:(x,y,z,w) \mapsto (x + f(w),y\l(w),z,w),
$$
and since $a^t = d$, we then can compute that
$$\begin{array}{ll}
a:(x,y,z,w) \mapsto (x\l(z),y,z,w+f(z)), & \ \   c:(x,y,z,w)  \mapsto (x,y+f(x),z\l(x),w), \\
b:(x,y,z,w)  \mapsto (x,y,z+f(y),w\l(y)),   & \ \  d:(x,y,z,w)  \mapsto (x+f(w),y\l(w),z,w).
\end{array}$$
To verify that this is an action, observe that $(a^t)^{a^j}$ acts via
$$
\left( a^t \right)^{a^j}:(x,y,z,w) \mapsto (x+\l(z)^{-j}f(w+jf(z)),y\l(w+jf(z)),z,w).
$$
Since this distorts $x$ and $y$ by only multiplication or only addition of elements that are not distorted (namely $z$ and $w$), $(a^t)^{a^j}$ will commute with $(a^t)^{a^i}$.
So, $\psi_{p,f,\lambda}$ is a homomorphism   $\H_4(\Z\wr\Z) \to \Sym(p^4)$.
Using a combinatorial description of the elements in $\Z\wr \Z$ it is possible to show that the restriction of  $\psi_{p,f,\lambda}$ to $\Z\wr \Z$ is injective.

These actions $\psi_{p,f,\lambda}$ are quite different from those arising in our proof of Theorem~\ref{ressolvable}.  We expect that for generic functions $f$ and $\lambda$, the image
of $\psi_{p,f,\lambda}$ will either be the full symmetric group, or the alternating group, and so will be very far from  (residually) solvable.   It is intriguing question whether the actions $\psi_{p,f,\lambda}$ distinguish all elements in $\H_4(\Z\wr \Z)$. We  see no  reason why this should not the case, but
without having an easily understandable combinatorial model of $\H_4(\Z\wr \Z)$, it is hard to prove such claim.
\end{remark}

\subsection{The free metabelian group on two generators}
\label{FMG section}

The free metabelian group on two generators $\Met$  has a presentation
$$
\Met = \left\langle a, b \  \left| \  \left[[a, b],[a,b]^{a^ib^j}\right] = 1 \ \forall i,j \in \Z \right. \right\rangle.
$$
It is a residually finite solvable group, so  is amenable.
For   $n \in \N$ and   $p$ and $q$  relatively prime to $n$, define a map $\psi_{n,p,q}: \Z \wr \Z \to \Sym(\Z/n\Z)$  by
\begin{align*}
\psi_{n,p,q}(a) &= \left(x \mapsto q^{-1}(x+1)\right) \text{, and} \\
\psi_{n,p,q}(b) &= \left(x \mapsto p^{-1}x \right),
\end{align*}
which extends to the  whole of $\Met$ since the permutations $\psi_{n,p,q}(a)$ and $\psi_{n,p,q}(b)$ satisfy the defining relations of $\Met$.
We have the analogue of Lemma~\ref{lm-soficapproximationZwrZ}, whose proof is practically the same:
\begin{lemma}
\label{lm-soficapproximationFMB}
For all finite sets $S \subseteq \Met$ and all $\d >0$, there exists a constant $C$ such that for all integers $n$, $p$, and $q$ with $n$ coprime to $p$ and $q$  and satisfying $n \nmid t(p,q)$
for all nonzero polynomials $t(x,y) = \sum t_{i,j} x^iy^j \in \Z[x,y]$ with integer coefficients $|t_{ij} | < C$ and total degree at most $C$, the map
$\psi_{n,p,q}$ is an $(S,\d,n)$-approximation of $\Met$.

If, moreover, $ |q| > 2C+1$, $|p| >  |q|^{C +1}$ and $n >  |p|^{ C +1}$, then all such polynomials satisfy  $n \nmid t(p,q)$, and so  $\psi_{n,p,q}$  necessary is an $(S,\d,n)$-approximation of $\Met$.
\end{lemma}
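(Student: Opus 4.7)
The plan is to follow the proof of Lemma~\ref{lm-soficapproximationZwrZ} essentially verbatim, replacing the one-variable Laurent polynomial ring $\Z[x^{\pm}]$ with the two-variable version $\Z[x^{\pm}, y^{\pm}]$. Since both $\Met$ and the affine group $\Z/n\Z \rtimes (\Z/n\Z)^{*}$ are metabelian, $\psi_{n,p,q}$ extends to a homomorphism on all of $\Met$; each element $s \in \Met$ acts on $\Z/n\Z$ as an affine map
\[
x \ \longmapsto \ p^{-\nu_{(s)}} q^{-\mu_{(s)}} \bigl( x + \tilde t_{(s)}(p,q) \bigr),
\]
where $(\mu_{(s)}, \nu_{(s)}) \in \Z^2$ is the abelianization image of $s$ and $\tilde t_{(s)}(x,y) \in \Z[x^{\pm}, y^{\pm}]$ is the Laurent polynomial encoding the translation part (obtainable from the Magnus embedding of $\Met$, or directly by induction on a word representing $s$).

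Given a finite subset $S \subseteq \Met$, choose $N$ large enough that every $s \in S$ satisfies $|\mu_{(s)}|, |\nu_{(s)}| \le N$, and $\tilde t_{(s)}$ has Laurent exponents in $[-N, N]$ with integer coefficients bounded in absolute value by $N$. Setting $t_{(s)}(x,y) := x^N y^N \tilde t_{(s)}(x,y)$ produces genuine polynomials in $\Z[x,y]$ of total degree at most $2N$ and coefficient height at most $N$. Pick $C := \max\{2N+1, \lfloor 1/\delta \rfloor + 1\}$ (or any comparable bound), so that every $t_{(s)}$, and also every polynomial $x^{\mu_{(s)}} y^{\nu_{(s)}} - 1$, fits the polynomial non-vanishing hypothesis. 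The case analysis for the fixed points of $\psi_{n,p,q}(s)$ then proceeds just as in the $\Z \wr \Z$ lemma: if $\mu_{(s)} = \nu_{(s)} = 0$, the map is translation by a unit times $t_{(s)}(p,q) \not\equiv 0 \pmod n$, so has no fixed points; if $p^{\nu_{(s)}} q^{\mu_{(s)}} \equiv 1 \pmod n$ but $(\mu_{(s)}, \nu_{(s)}) \neq 0$, the map again reduces to a nontrivial translation; otherwise the number of fixed points is $\gcd(p^{\nu_{(s)}} q^{\mu_{(s)}} - 1, n)$, which is at most $\delta n$ by applying the polynomial hypothesis to $M(x^{\nu_{(s)}} y^{\mu_{(s)}} - 1)$ for integer $M < 1/\delta$ (else $n$ would divide $M(p^{\nu_{(s)}} q^{\mu_{(s)}} - 1)$, forcing $(p,q)$ to be a root). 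Together these yield $d(\psi_{n,p,q}(s), \id) > 1 - \delta$ for all $s \in S \smallsetminus \{e\}$; the almost-homomorphism condition is automatic since $\psi_{n,p,q}$ is a homomorphism.

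For the ``moreover'' part, I would verify that the magnitude hypotheses $|q| > 2C+1$, $|p| > |q|^{C+1}$, and $n > |p|^{C+1}$ force $0 < |t(p,q)| < n$ for every nonzero $t(x,y) = \sum t_{i,j} x^i y^j$ with $|t_{i,j}| < C$ and total degree at most $C$, whence $n \nmid t(p,q)$. The upper bound $|t(p,q)| < n$ is a direct term-by-term estimate using $n > |p|^{C+1}$. For the lower bound, group by powers of $p$ as $t(p,q) = \sum_{i=0}^{d} p^i T_i(q)$ with $T_d \neq 0$; the one-variable lower bound from the proof of Lemma~\ref{lm-soficapproximationZwrZ} (valid because $|q| > 2C+1$) gives $|T_d(q)| \ge |q|^{\deg T_d}/2 \ge 1/2$, while the contribution from $i < d$ is bounded by a constant multiple of $|p|^{d-1} |q|^C$ and is therefore dominated by $|p|^d/2$ as soon as $|p| > |q|^{C+1}$.

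The main technical obstacle is this iterated polynomial lower bound: keeping the constants aligned when iterating the one-variable estimate in two variables requires some care, and one must separately check that the auxiliary polynomials $x^{\nu_{(s)}} y^{\mu_{(s)}} - 1$ needed in the gcd bound fall within the scope of the polynomial hypothesis (which they do, since their degree and coefficient height are bounded by $C$). Beyond this, the argument is a mechanical two-variable translation of the $\Z \wr \Z$ proof.
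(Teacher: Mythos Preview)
Your proposal is correct and is exactly what the paper intends: it explicitly says the proof ``is practically the same'' as that of Lemma~\ref{lm-soficapproximationZwrZ}, and you have carried out precisely that two-variable translation via the Magnus/matrix embedding of $\Met$ into the affine group over $\Z[x^{\pm1},y^{\pm1}]$. One small bookkeeping point: with $\tilde t_{(s)}$ supported on exponents in $[-N,N]^2$, the cleared polynomial $t_{(s)}(x,y)=x^N y^N\tilde t_{(s)}(x,y)$ can have total degree up to $4N$ rather than $2N$, so you should take $C\ge 4N+1$ (or record individual-degree bounds instead of total degree); this does not affect the argument.
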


So Theorem~\ref{main-precise}  tells us that for all $k \in \N$ the group $\H_k(\Met)$ has a   sofic quotient into which $\Met$ naturally injects if and only there exist infinitely many
$n$ and functions $f: \Z/n\Z \to \Z/n\Z$ of order dividing $k$  which conjugate the actions of $\psi_{n,p,q}(a)$ and $\psi_{n,p,q}(b)$  up to error at most $\ep$.

For $k\geq 4$ we can use Corollary~\ref{so sofic cor} to see that $\H_k(\Met)$ is sofic and  Lemma~\ref{injective} to see that $\Met$ naturally embeds into it.  The case $k=2$  is handled by the observation
$\overline{\H}_2(\Met) \cong \Met$, and $k=3$ by Proposition~\ref{soficquotient}. (We do not know whether $\H_3(\Met)$ is sofic.)  As for the case $k=1$, the group $\H_1(\Met)$ is also  sofic since $\overline{\H}_1(\Met)  = \H_1(\Met) \cong \Z$ but $\Met$ does not embed in  $\H_1(\Met)$.

Theorem~\ref{main-precise} then allows us to conclude:

\begin{theorem}
\label{th-functions-FMB}
For all $\ep >0$ and $k \geq 2$, there exists $C$ such that if $n$ is coprime to $p$ and $q$ and $|q| > C$, $|p| > |q|^C$ and $ n > |p|^C$, then there exists $f \in \Sym(\Z /n\Z)$
such that $f^k = \id$ and $f(qx+1) = p f(x)$ for at least $(1-\ep)n$ values of $x \in \Z /n\Z$.
\end{theorem}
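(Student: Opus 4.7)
The plan is to mirror the argument used for Theorem~\ref{th-functions-ZwrZ} in the preceding section on $\Z \wr \Z$. The three ingredients needed are: (i) a sofic quotient $Q$ of $\H_k(\Met)$ into which $\Met$ embeds; (ii) the explicit family of sofic approximations $\psi_{n,p,q}$ from Lemma~\ref{lm-soficapproximationFMB}; and (iii) the implication \emph{(\ref{main-precise1})}~$\thus$~\emph{(\ref{main-precise3})} of Theorem~\ref{main-precise}, which is available because $\Met$ is amenable.

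First I would verify (i) by a small case analysis on $k$. For $k \geq 4$, soficity of $\H_k(\Met)$ follows from Corollary~\ref{so sofic cor}, while the injection $\Met \hookrightarrow \H_k(\Met)$ follows from Lemma~\ref{injective} (note $\langle a \rangle \cap \langle b \rangle = \{1\}$ in $\Met$, since $a$ and $b$ are independent in the abelianization $\Z^2$). For $k=3$, Proposition~\ref{soficquotient} yields a sofic quotient into which $\Met$ injects. The case $k=2$ is handled directly via $\overline{\H}_2(\Met) \cong \Met$, which is amenable and hence sofic.

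Next I would take $S = \{a,b\} \subseteq \Met$ (so that $b \in S$ and $\phi(b) = a \in A \cap S$) and feed it into Theorem~\ref{main-precise}\emph{(\ref{main-precise3})} to obtain a finite set $S' \supseteq S$, a $\delta > 0$, and an integer $N$, with the property that every $(S',\delta,n)$-approximation $\psi$ of $\Met$ with $n > N$ admits a permutation $f \in \Sym(\Z/n\Z)$ of order dividing $k$ satisfying $d(\psi(b) \circ f,\, f \circ \psi(a)) < \varepsilon$. Running $(S',\delta)$ through Lemma~\ref{lm-soficapproximationFMB} produces a constant $C_0$. Taking $C := \max\{2C_0+1,\ \lceil \log_2 N\rceil + 1\}$ then ensures that, for all $n, p, q$ satisfying the hypotheses of Theorem~\ref{th-functions-FMB} with this $C$, the map $\psi_{n,p,q}$ is an $(S',\delta,n)$-approximation of $\Met$ and $n > N$, so the required $f$ exists.

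Finally I would unpack the almost-conjugation inequality into the stated recurrence. Since $\psi_{n,p,q}(a) : x \mapsto q^{-1}(x+1)$ and $\psi_{n,p,q}(b) : x \mapsto p^{-1} x$, the bound $d(\psi_{n,p,q}(b)\circ f,\, f \circ \psi_{n,p,q}(a)) < \varepsilon$ says that $p^{-1} f(x) = f(q^{-1}(x+1))$ for at least $(1-\varepsilon)n$ values of $x$; substituting $y = q^{-1}(x+1)$ rewrites this as $f(qy-1) = p f(y)$. Applying the sign-conjugation trick from Section~\ref{BS section}, namely replacing $f$ by $\tilde f(x) := -f(-x)$---which has the same order as $f$, since it is conjugate to $f$ by $x \mapsto -x$---converts the recurrence into $\tilde f(qz+1) = p\,\tilde f(z)$ on a set of size at least $(1-\varepsilon)n$, which is exactly the conclusion. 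The whole argument is routine once the framework is in place; the only mild fuss is absorbing $C_0$ and $N$ into the single constant $C$ and confirming that the sign-twist preserves both the order condition and the density of the recurrence.
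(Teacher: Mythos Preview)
Your proposal is correct and follows essentially the same route as the paper: establish condition~\emph{(\ref{main-precise1})} of Theorem~\ref{main-precise} by the case split on $k$ (Corollary~\ref{so sofic cor} and Lemma~\ref{injective} for $k\ge 4$, Proposition~\ref{soficquotient} for $k=3$, and $\overline{\H}_2(\Met)\cong\Met$ for $k=2$), then feed the explicit approximations $\psi_{n,p,q}$ from Lemma~\ref{lm-soficapproximationFMB} into the implication \emph{(\ref{main-precise1})}~$\Rightarrow$~\emph{(\ref{main-precise3})}, and finally unpack the almost-conjugation condition via the substitution and sign-conjugation trick. The paper leaves the constant bookkeeping and the sign-twist implicit, but your added detail is accurate.
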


\section{Heuristic} \label{heuristics}
Here we will explain why the existence of the permutations $f \in \Sym(n)$ proved in Theorem~\ref{main-precise}  is surprising.
We will focus on instances where $A=\langle a \rangle \cong B = \langle b \rangle \cong \Z$ and $k=4$  which is the case in most of our examples.
(We could generalize to $k \geq 4$ without significantly changing the following argument, but the assumption that   $A \cong B \cong \Z$   is essential.)

Denote $\alpha = \psi(a)$ and  $\beta = \psi(b)$. Each permutation  $f$ satisfies the global condition $f^4 =\id$; and many local conditions: the  condition concerning
$d( \psi (b) \circ f, f \circ \psi(\phi(b)))$ is equivalent to
$
f (\alpha (x)) = \beta (f (x))
$
for at least $(1-\ep) n$ points $x$.

One can estimate  the probability that a permutation $f$ chosen uniformly at random from $\Sym(n)$ satisfies that global condition:
by considering cycle structure, one counts the number of elements of order dividing $4$ in the symmetric group $\Sym(n)$ (see~\cite{chs}), which leads to
$$
P  \ = \  \mathrm{Prob}( f^4 = \id)  \ \approx \  \frac{1}{\sqrt[4]{|\Sym(n)|}}  \ \approx  \ n^{-n/4}.
$$
(Here we are only describing the leading term of the expansion of $\log P$.)

It is also quite easy to estimate the probability that a local condition is satisfied.
A local condition asserts that $f(\alpha(x))$ is determined by
$f(x)$---the probability that this happens at a given $x$  is approximately $1/n$ (only approximately since $x$ might be a fixed point of $\alpha$ or $f(x)$ might be a fixed point for $\beta$).
However, this probability is irrelevant since we want
$f (\alpha (x)) = \beta (f (x))$
for the majority of $x$ (for at least $(1-\ep) n$ points $x$, to be precise).
Informally, a small number of local conditions are almost independent from each  other, but this is not true if we consider many local conditions.

The number of permutations in $\Sym(n)$ satisfying $f (\alpha (x)) = \beta (f (x))$  for at least $(1-\ep) n$ points $x$ is at most $n^{2 \ep n +k}$ where $k$ is the number cycles in the action of $\alpha$---such
permutations are determined by the following: the points $x$ where the local condition is not satisfied; values of $f(\alpha(x))$ at each of these points; and the values of $f$ at a single point on each cycle on the action of $\alpha$. (This  information specifies a function $f$ but it may not be a permutation.) For all $\ep' >0$, we have
$k < \ep' n$ for large $n$, otherwise a small power of $\alpha$ will be close to the identity permutation, which would contradict the fact that $\psi$ detects the soficity of $G$.
Thus, the number of permutations satisfying the majority of the local conditions is at most $ K = n^{(2 \ep + \ep') n}$.

If the global condition is almost independent  from the local conditions, then the expected number of permutation satisfying both, should be around
$$
P . K  \ = \   n^{-n/4}. n^{(2 \ep + \ep') n}  \ = \  n^{(2 \ep + \ep' -1/4 ) n} \ll 1,
$$
if $\ep, \ep' < 1/20$. Thus, one should expect  that there are no such permutations when $n$ is sufficiently large.

The independence  assumption is somewhat justified by the observation that the global condition is independent form each of the local conditions (it is also almost independent from any fixed number of local conditions). Notice that this heuristic does not really depend on the group $G$.

The main weakness of this heuristic is the assumption that the global condition  is almost independent from the majority of the local conditions.
One can interpret Theorem~\ref{main-precise} as saying there is a connection between the soficity
of the group $\H_k(G)$ and the independence of  global versus local conditions.

\section*{Acknowledgements}

We thank Harald Helfgott  for discussions on this project and are grateful for partial support from NSF grants DMS 1303117 and 1601406 (Kassabov),  the Rawlings College Presidential Research Scholars program at Cornell (Kuperberg), and   Simons Collaboration Grant 318301 (Riley).

\bibliographystyle{amsplain}
\bibliography{hwreath}

\bigskip

   \href{https://www.math.cornell.edu/m/People/bynetid/mdk35}{\textsc{Martin Kassabov}}, \texttt{kassabov@math.cornell.edu} \\
  \href{http://www.math.cornell.edu/~viviankuperberg/}{\textsc{Vivian Kuperberg}},
  \texttt{viviank@stanford.edu} \\
  \href{http://www.math.cornell.edu/~riley/}{\textsc{Timothy Riley}},   \texttt{tim.riley@math.cornell.edu} \\[2mm]
MK and TR: Dept.\ of Mathematics, 310 Malott Hall,  Cornell, Ithaca, NY 14853, USA \\
VK: Dept.\ of Mathematics, 450 Serra Mall, Stanford University, Stanford, CA 94305, USA

\end{document}